\newtheorem{theorem}{Theorem}
\newtheorem*{theorem*}{Theorem}
\newtheorem{corollary}[theorem]{Corollary}
\newtheorem{definition}[theorem]{Definition}
\newtheorem{example}[theorem]{Example}
\newtheorem{lemma}[theorem]{Lemma}
\newtheorem{proposition}[theorem]{Proposition}
\newtheorem{remark}[theorem]{Remark}
\newtheorem*{remark*}{Remark}
\begin{document}

\author{J. J. S\'anchez-Gabites}
\title{On the shape of attractors of discrete dynamical systems}
\email{JaimeJ.Sanchez@uam.es}
\address{J. J. S{\'{a}}nchez-Gabites \\ Departamento de An\'alisis econ\'omico (M\'etodos cuantitativos) \\ Facultad de Ciencias Econ\'omicas y Empresariales \\ Universidad Aut\'onoma de Madrid \\ 28049 Madrid \\ Spain}
\subjclass[2000]{37B25, 37E99, 54H20, 55P55}
\thanks{The author is supported by a MEC grant (MTM2009-07030)}
\keywords{Attractor, discrete dynamical system, shape}
\begin{abstract} Let $M$ be a manifold or (more generally) a locally compact, metrizable ANR. If $K$ is an attractor for a flow in $M$, with basin of attraction $\mathcal{A}(K)$, it is well known that the inclusion $i : K \subseteq \mathcal{A}(K)$ is always a shape equivalence. In this paper we investigate to what extent this generalizes to discrete dynamical systems generated by homeomorphisms, proving that it holds if (and only if) $K$ has polyhedral shape. Then we specialize to the case when $M$ is a manifold of dimension $\leq 3$.
\end{abstract}

\maketitle

\section{Introduction}

Shape theory was invented by Borsuk \cite{borsukshape2} as a modification of homotopy theory especially designed to deal with possibly complicated compact spaces. It replaces the constructions and concepts of homotopy theory by suitable counterparts ``\`a la \v{C}ech'' which overlook the local irregularities of the spaces while still capturing their global features. Shape theory has naturally found interesting applications in the realm of dynamical systems, since very frequently the sets of interest there have a very complicated structure which makes homotopy theory unsuitable for their study.

One of the first authors to use shape theoretical techniques in dynamics was Hastings \cite{hastings1} when considering the following problem. Suppose $\varphi$ is a flow on a manifold $M$ and assume that it has a compact, asymptotically stable attractor $K$ whose basin of attraction we denote $\mathcal{A}(K)$. Although it is reasonably easy to prove the existence of the attractor by finding a trapping region, it is much more difficult to describe the attractor itself or its basin of attraction. In general both may have a very complicated structure, and it becomes interesting to study what topological properties are shared by the attractor and its basin of attraction. The general result that has gradually emerged from the work of Hastings and other authors \cite{sanjurjo3,gunthersegal1,hastings1,kapitanski1,sanjurjo1,sanjurjo4} is essentially the following:

\begin{theorem} \label{teo:flujos} Suppose $\varphi$ is a flow on a manifold (or, more generally, a locally compact, metrizable absolute neighbourhood retract). If $K$ is an attractor for $\varphi$ with basin of attraction $\mathcal{A}(K)$, then the inclusion $i : K \subseteq \mathcal{A}(K)$ is a shape equivalence.
\end{theorem}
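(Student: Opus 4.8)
The plan is to reduce the statement to elementary facts about the flow near $K$ together with the observation that, since $\mathcal{A}(K)$ is open in the ANR $M$, it is itself an ANR, so its shape coincides with its homotopy type; this allows one to compare $K$ with $\mathcal{A}(K)$ through an intermediate trapping region. I will use the standard fact that an asymptotically stable compact attractor $K$ has a cofinal family of \emph{trapping regions} $P$ (compact neighbourhoods of $K$ with $\varphi([0,\infty)\times P)\subseteq P$ and $\varphi(t,P)\to K$ in the Hausdorff metric as $t\to\infty$), and that $\mathcal{A}(K)=\bigcup_{t\ge 0}\varphi(-t,\operatorname{int}P)$ for any one of them.

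The first step is the following lemma. \emph{If $P'\subseteq\operatorname{int}P$ are trapping regions of $K$, the inclusion $P'\hookrightarrow P$ is a homotopy equivalence; and for any trapping region $P$ the inclusion $P\hookrightarrow\mathcal{A}(K)$ is a homotopy equivalence.} For the first part one picks $T>0$ with $\varphi(T,P)\subseteq\operatorname{int}P'$ (possible because $\varphi(t,P)\to K\subseteq\operatorname{int}P'$) and checks that $g:=\varphi(T,\,\cdot\,)|_{P}\colon P\to P'$ is a homotopy inverse of the inclusion: the homotopies $(x,s)\mapsto\varphi(sT,x)$ realizing $g\circ(\mathrm{incl})\simeq\operatorname{id}$ and $(\mathrm{incl})\circ g\simeq\operatorname{id}$ remain inside $P'$, resp.\ inside $P$, by positive invariance. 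For the second part one uses a first-entrance-time deformation of $\mathcal{A}(K)$ onto $P$, choosing if necessary a trapping region along whose boundary orbits enter $\operatorname{int}P$ so that the entrance-time function $\tau(x)=\min\{t\ge 0:\varphi(t,x)\in P\}$ is continuous; then $(x,s)\mapsto\varphi(s\tau(x),x)$ is the desired deformation retraction.

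Next I would build a decreasing sequence of trapping regions $P_1\supseteq P_2\supseteq\cdots$ with $P_{i+1}\subseteq\operatorname{int}P_i$ and $\bigcap_i P_i=K$, and interleave it with open neighbourhoods of $K$, $P_1\supseteq W_1\supseteq P_2\supseteq W_2\supseteq\cdots$, still with intersection $K$ (here metrizability and local compactness of $M$ are used). Because the $W_i$ are open in $\mathcal{A}(K)$ they are ANRs, and $(W_i,\text{inclusions})$ is an ANR-expansion of the compactum $K$, so it represents $\operatorname{Sh}(K)$ in the pro-homotopy category. The ladder of inclusions $P_1\hookleftarrow W_1\hookleftarrow P_2\hookleftarrow W_2\hookleftarrow\cdots$ has the property that its consecutive composites are exactly the bonding maps of $(P_i)$ and of $(W_i)$; this is the standard interleaving situation and yields a pro-isomorphism $(W_i)\cong(P_i)$. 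By the lemma every bond $P_{i+1}\hookrightarrow P_i$ is a homotopy equivalence, so $(P_i)$ is pro-isomorphic to the constant system $P_1$. Since every map occurring is an inclusion, the composite isomorphism $\operatorname{Sh}(K)\cong\operatorname{Sh}(P_1)$ is the one induced by $K\hookrightarrow P_1$; composing it with the isomorphism induced by the homotopy equivalence $P_1\hookrightarrow\mathcal{A}(K)$ from the lemma shows that $\operatorname{Sh}(i)$ is an isomorphism. (When $M$ is a manifold the interleaving is unnecessary: one may take the $P_i$ to be sublevel sets of a smooth Lyapunov function for $K$, hence compact manifolds with boundary and in particular ANRs, so that $(P_i)$ is itself an ANR-expansion of $K$.)

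The steps phrased in terms of the flow are routine; the real issue is the passage from homotopy-theoretic information about the trapping regions $P_i$---which need not be ANRs when $M$ is a general ANR---to a statement about the \emph{shape} of $K$. This is where the ANR hypothesis on $M$ is genuinely needed, and the interleaving with the open sets $W_i$ is the device that makes it work; one must also keep track of the fact that the abstract pro-isomorphisms produced are precisely the ones induced by the inclusion $i$, which is automatic here because every map in the construction is an inclusion.
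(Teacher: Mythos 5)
Your proof is correct, but it follows a genuinely different route from the paper's. What you give is essentially the classical Hastings/Sanjurjo-style argument: use the flow to show that nested trapping regions include into one another by homotopy equivalences (your fixed-time-$T$ homotopy inverse is in fact tidier than the usual first-hit retraction at this step, since it needs no transversality along the boundary), show that a well-chosen trapping region is a strong deformation retract of $\mathcal{A}(K)$ via the entrance-time function, and then pass to shape by interleaving the trapping regions with open (hence ANR) neighbourhoods and checking that the resulting pro-isomorphism is the one induced by the inclusion $i$; the two delicate points you correctly flag (continuity of the entrance time, which forces a Lyapunov-type choice of $P$, and the fact that the interleaved open sets are cofinal so that they really form an ANR expansion of $K$) are exactly where the care is needed, and both go through. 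The paper only sketches this classical argument informally in Section \ref{sec:2}, and only at the level of $H_1$; its actual proof of Theorem \ref{teo:flujos} is entirely different: it deduces the statement from the discrete theory, applying Corollary \ref{cor:fixed} (itself a consequence of Theorem \ref{teo:main} via shape domination by a polyhedron) to the time-one map $f(p)=\varphi(p,1)$, the flow being used only to provide the homotopy $f|_K \simeq \mathrm{id}_K$. Your approach buys a self-contained proof with an (implicitly) explicit shape inverse built from the flow and standard pro-category bookkeeping, at the cost of the Lyapunov-function machinery; the paper's approach buys uniformity --- the continuous case drops out of the general discrete theorem with no further geometric work --- at the cost of heavier algebraic machinery (Mittag--Leffler, Whitehead theorem) and a non-constructive shape inverse.
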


Let us remark that $i$ is usually \emph{not} a homotopy equivalence and the use of shape theory is crucial here. For instance, suppose $K \subseteq \mathbb{R}^2$ is a connected but not path connected attractor whose basin of attraction is all of $\mathbb{R}^2$ (it is easy to construct such an example). Then $i$ is a shape equivalence by the theorem above but it is not a homotopy equivalence, because homotopically equivalent spaces have the same number of path connected components.

Our goal in this paper is to obtain a version of Theorem \ref{teo:flujos} for discrete dynamics; that is, for homeomorphisms rather than flows. Since the proof of Theorem \ref{teo:flujos} (which is reviewed in Section \ref{sec:2}) relies heavily on the flow to construct suitable homotopies that provide a shape inverse for $i$, one expects that the result may not generally hold for discrete dynamics. This is indeed the case, as illustrated by the very simple Example \ref{ejem:solenoid} below, and it makes the situation quite more complicated than its continuous counterpart. Several authors have studied the relation between an attractor and its basin of attraction in this discrete setting, concentrating on their connectedness \cite{gobbinosardella1}, their \v{C}ech cohomology \cite{gobbino1} or their shape \cite{moronpaco1}. In this paper we are able to: (i) fully characterize when Theorem \ref{teo:flujos} holds for a discrete dynamical system (Theorem \ref{teo:main}), (ii) prove that it always holds in $2$--manifolds (Theorem \ref{teo:2mfds}), and (iii) obtain an ``almost explicitly'' checkable characterization of when it holds in $\mathbb{R}^3$ (Theorem \ref{teo:3var}).

The paper is organized as follows. In Section \ref{sec:statements} we give the precise statements of our results. Section \ref{sec:2} is intended to illustrate why the continuous and the discrete cases of Theorem \ref{teo:flujos} are so different and what difficulties arise when studying the latter. In Section \ref{sec:main} we prove Theorem \ref{teo:main}. The argument proceeds via the Whitehead theorem, showing that the inclusion of the attractor in its basin of attraction induces isomorphisms between the corresponding shape pro--groups. Part of the proof is purely algebraic, and in Section \ref{sec:algebra} we abstract this argument to obtain a result (Theorem \ref{teo:algebra}) that is central to the study of the cases when $M$ is a low dimensional manifold. These are the subject of Sections \ref{sec:2mfds} and \ref{sec:3mfds}.

The few elementary definitions about dynamics that will be used along the paper are recalled at the beginning of Section \ref{sec:statements}. Unfortunately, it seems impossible to do the same with shape theory in a sufficiently condensed manner, so we assume the reader to have some familiarity with the subject. Appendix \ref{ap:shape}, however, includes suitable bibliographic references and reviews some working definitions that will be used along the paper.

\section{Statement of results} \label{sec:statements}

A \emph{discrete dynamical system} on a space $M$ is a homeomorphism $f : M \longrightarrow M$. An \emph{attractor} for $f$ is a compact set $K$, \emph{invariant} under $f$ (that is, $f(K) = K$) and with the following property: it has a neighbourhood $U$ such that every compact set $C \subseteq U$ is \emph{attracted} by $K$, this meaning that, for every neighbourhood $V$ of $K$, there exists $n_0 \in \mathbb{N}$ such that $f^n(C) \subseteq V$ for every $n \geq n_0$. The maximal neighbourhood $U$ with this property is an open invariant subset of $M$ called the \emph{basin of attraction}, denoted $\mathcal{A}(K)$.

The previous definition is completely standard, although it is sometimes useful to work with an equivalent one which is more operational. Here we follow Katok and Hasselblatt \cite{katok1}. Suppose there is a compact set $P \subseteq M$ such that $fP \subseteq {\rm int}\ P$. We say that $P$ is a \emph{trapping region} for $f$. Then $\{f^nP\}_{n \geq 0}$ is a decreasing sequence of compact subsets of $P$, and their intersection $K$ is a nonempty compact set contained in ${\rm int}\ P$. It is easy to check that $K$ is invariant. Also, if $V$ is any neighbourhood of $K$ then the local compactness of $M$ implies that there exists $n_0 \in \mathbb{N}$ such that $f^nP \subseteq V$ for every $n \geq n_0$, so in particular $K$ attracts every compact set $C \subseteq P$. Thus $K$ is an attractor whose basin of attraction contains $P$. Actually, it is easy to show that its basin of attraction is precisely $\mathcal{A}(K) = \bigcup_{n \leq 0} f^nP$. In the sequel we shall tacitly use these descriptions of an attractor and its basin of attraction.
\medskip

With the above definitions out of the way, let us begin by observing that already when $M = \mathbb{R}^3$ Theorem \ref{teo:flujos} does not hold true, at least without additional hypotheses, for discrete dynamical systems:

\begin{example} \label{ejem:solenoid} There exists a homeomorphism $f : \mathbb{R}^3 \longrightarrow \mathbb{R}^3$ having the dyadic solenoid $K$ as an attractor and such that the inclusion $i : K \subseteq \mathcal{A}(K)$ is not a shape equivalence.
\end{example}
\begin{proof} Let $T \subseteq \mathbb{R}^3$ be a solid torus and $f : \mathbb{R}^3 \longrightarrow \mathbb{R}^3$ be a homeomorphism such that $fT$ lies inside ${\rm int}\ T$ as shown in Figure \ref{fig:fig1}. The set $K := \bigcap_{n \geq 0} f^nT$ is the so-called \emph{dyadic solenoid}. Notice that $T$ is a trapping region for $f$, so $K$ is an attractor and $T$ is contained in its basin of attraction.

\begin{figure}[h!]
\begin{pspicture}(0,0)(9.3,3.4)
	\rput[bl](0,0){\scalebox{0.75}{\includegraphics{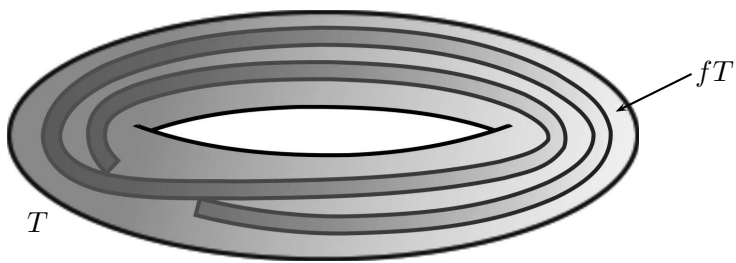}}}
	\psline{->}(9,2.5)(8,2) \rput(9.3,2.5){$fT$}
	\rput[b](0.4,0.4){$T$}
\end{pspicture}
\caption{$T$ and $fT$}
\label{fig:fig1}
\end{figure}

We claim that the inclusion $i$ cannot be a shape equivalence. For, assume on the contrary that it were. Then $i^* : \check{H}^1(\mathcal{A}(K);\mathbb{Z}) \longrightarrow \check{H}^1(K;\mathbb{Z})$ would be an isomorphism. Letting $j : K \subseteq T$ and $k : T \subseteq \mathcal{A}(K)$ denote the inclusions, clearly $i = k j$ so in particular $i^* = j^* k^*$ and we see that $j^* : \check{H}^1(T;\mathbb{Z}) = \mathbb{Z} \longrightarrow \check{H}^1(K;\mathbb{Z})$ would be surjective. But $\check{H}^1(K;\mathbb{Z})$ is not even finitely generated, so this is impossible.
\end{proof}

We have just seen that, in general, we cannot expect Theorem \ref{teo:flujos} to hold in the discrete case. Our first result provides a complete characterization of when it does. Let us call the geometric realization of a locally finite simplicial complex a \emph{polyhedron}\footnote{The simplicial complex itself may be either finite, in which case the polyhedron is compact, or infinite, in which case the polyhedron is noncompact. We warn the reader that some authors reserve the word ``polyhedron'' for compact polyhedra. That is the case in the paper by Kadlof \cite{kadlof1} which we shall use later on.}. Then:

\begin{theorem} \label{teo:main} Let $K$ be an attractor for a homeomorphism of a manifold or, more generally, a locally compact, metrizable absolute neighbourhood retract $M$. The inclusion $i : K \subseteq \mathcal{A}(K)$ is a shape equivalence if, and only if, $K$ has the shape of a polyhedron.
\end{theorem}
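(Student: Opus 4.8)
The two implications are treated separately, and only the ``if'' direction is substantial. The ``only if'' direction is immediate: if $i\colon K\subseteq\mathcal A(K)$ is a shape equivalence, then $K$ has the shape of $\mathcal A(K)$, and $\mathcal A(K)$, being an open subset of $M$, is itself a locally compact metrizable ANR; since every ANR has the homotopy type of a CW complex (in the present setting, of a polyhedron), $K$ has polyhedral shape, and there is nothing more to do.

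For the ``if'' direction I would first compress all the geometry into a single self-map. Fix a trapping region $P$, so that $K=\bigcap_{n\ge0}f^nP$ and $\mathcal A(K)=\bigcup_{n\ge0}f^{-n}P$. The open ANRs $\mathrm{int}(f^nP)$ form a neighbourhood basis of $K$, so the shape of $K$ is represented by the inverse sequence $\{\mathrm{int}(f^nP)\}_{n\ge0}$ with the inclusions as bonds; transporting this along the homeomorphisms $f^{-n}\colon f^nP\to P$ identifies it with the inverse sequence $\bigl(\cdots\xrightarrow{g}Q\xrightarrow{g}Q\xrightarrow{g}Q\bigr)$, where $Q:=\mathrm{int}\,P$ and $g:=f|_Q\colon Q\to Q$. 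Symmetrically, $\mathcal A(K)=\bigcup_{n\ge0}f^{-n}P$ is an increasing union of open ANRs, hence homotopy equivalent to the homotopy direct limit (mapping telescope) of $\bigl(Q\xrightarrow{g}Q\xrightarrow{g}Q\to\cdots\bigr)$, and under these identifications the shape morphism induced by $i$ becomes the canonical comparison morphism from the inverse limit of the tower $(Q,g)$ to its direct limit. At this point the dynamics has been used up entirely.

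The real work is now algebraic, and is what Section~\ref{sec:algebra} abstracts. The hypothesis that $K$ has polyhedral shape says precisely that the tower $\bigl(\cdots\xrightarrow{g}Q\xrightarrow{g}Q\bigr)$ is isomorphic, in the pro-homotopy category, to a rudimentary object represented by a polyhedron $L$. The toy model to keep in mind is the corresponding statement for groups: for an endomorphism $\alpha$ of a group $A$, the pro-group $\bigl(\cdots\xrightarrow{\alpha}A\xrightarrow{\alpha}A\bigr)$ is pro-isomorphic to a constant one if and only if the iterated images $\alpha^k(A)$ stabilise to a subgroup $B$ on which $\alpha$ restricts to an isomorphism, and in that case the direct limit $\varinjlim\bigl(A\xrightarrow{\alpha}A\to\cdots\bigr)$ is canonically isomorphic to $B$, the comparison map being an isomorphism. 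I would prove the homotopy-theoretic counterpart (Theorem~\ref{teo:algebra}): pro-equivalence of $(Q,g)$ to a rudimentary $L$ forces its mapping telescope to be homotopy equivalent to $L$ via the comparison map. The proof of that counterpart runs through the pro-homotopy groups $\mathrm{pro}$-$\pi_n$, applies the group-level fact in each degree, and then appeals to the \emph{ordinary} Whitehead theorem --- legitimate because $L$, the telescope, and $\mathcal A(K)$ all have CW homotopy type. Granting it, $K$, $L$, and $\mathcal A(K)$ are shape equivalent compatibly with $i$, so $i$ is a shape equivalence.

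The main obstacle is exactly the passage from the group-level statement to its homotopy-theoretic form: for groups one literally takes the image of $\alpha^k$, whereas a homotopy class of maps has no ``image'', so one must instead handle the whole pro-homotopy type at once, controlling all the $\mathrm{pro}$-$\pi_n$ simultaneously (including the action of $\mathrm{pro}$-$\pi_1$ on the higher groups) and making sure the Whitehead-type arguments apply in the generality at hand --- in particular $\mathcal A(K)$ may be infinite-dimensional, so one leans on its being an ANR rather than on finite shape dimension. Carrying out this bookkeeping, and separating off the purely formal core so that it is available for the low-dimensional refinements of Sections~\ref{sec:2mfds} and~\ref{sec:3mfds}, is where the difficulty concentrates.
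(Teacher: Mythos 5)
Your proposal is sound and shares the paper's overall architecture (reduce to showing $i_*\colon\check{\pi}_d(K,*)\to\pi_d(\mathcal A(K),*)$ is an isomorphism in every degree, realize both groups as the inverse and direct limit of one conjugation-rigid tower coming from a trapping region, then conclude with the classical Whitehead theorem through a polyhedron shape-equivalent to $K$ — your ``only if'' direction is literally the paper's), but the engine you use for the key step is genuinely different. The paper splits the work: injectivity of $i_*$ is obtained geometrically, from the fact that a compactum of polyhedral shape is a pointed FANR (a shape retraction of a neighbourhood onto $K$), transported to all levels $f^nU$ via joinability of $K$ and basepoint-change isomorphisms (its Claim 3); surjectivity is obtained algebraically from the Mittag--Leffler property plus the rigidity diagrams. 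You instead invoke the full strength of the hypothesis — polyhedral shape makes each ${\rm pro-}\pi_d(K,*)$ \emph{stable}, not merely Mittag--Leffler — and use one algebraic lemma: for a tower with constant group $A$ and constant bond $\alpha$, stability is equivalent to the images $\alpha^k(A)$ stabilizing at a subgroup $B$ with $\alpha|_B$ an isomorphism, and then $\varprojlim\to\varinjlim$ is an isomorphism. That lemma is correct (stability forces Mittag--Leffler, and the pro-isomorphism with a constant group forces injectivity of $\alpha$ on the stable image), and it buys a more unified argument that needs no Hopfian/residual-finiteness input, unlike the paper's Theorem \ref{teo:algebra}; the price is that your lemma would not serve in Sections \ref{sec:2mfds}--\ref{sec:3mfds}, where only Mittag--Leffler is available and the extra hypotheses of Theorem \ref{teo:algebra} (finite generation and residual finiteness of $\pi_1(P)$) are exactly what compensate for the missing stability — so your abstraction is not a drop-in replacement for the paper's Section \ref{sec:algebra}. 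Do note that the ``bookkeeping'' you defer is precisely where the paper spends its effort: to get an honestly pointed constant-bond tower you must correct basepoints along paths (the $h_{\gamma}f_*$ diagrams), you need the cited fact that polyhedral shape implies \emph{pointed} polyhedral shape before the pointed pro-groups and the Whitehead step are available, and the compatibility of your homotopy equivalence $L\to\mathcal A(K)$ with $i$ and the shape equivalence $L\to K$ must be checked (the paper's Claim 1); none of this is an obstruction, but it is not optional.
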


It is quite surprising that the shape of $K$ alone suffices to decide whether $i$ is a shape equivalence regardless of the dynamics of $f|_K$. However, assumptions on $f|_K$ can also be profitable:

\begin{corollary} \label{cor:fixed} Assume $f|_K$ is homotopic to the identity ${\rm id}_K$. Then the inclusion $i : K \subseteq \mathcal{A}(K)$ is a shape equivalence.
\end{corollary}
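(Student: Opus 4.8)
The plan is to deduce this from Theorem~\ref{teo:main}, by showing that $f|_K \simeq \mathrm{id}_K$ forces $K$ to have the shape of a polyhedron. Fix a trapping region $P$, so that $K = \bigcap_{n \geq 0} f^n P$, and set $V_m := f^m(\mathrm{int}\,P)$. Each $V_m$ is open in $M$, hence an ANR; the inclusions $V_{m+1} \subseteq V_m$ realise $K$ as the inverse limit of $V_0 \leftarrow V_1 \leftarrow V_2 \leftarrow \cdots$; and conjugating by the homeomorphisms $f^{-m} : V_m \to V_0$ turns this into
\[
\mathbf{V}:\qquad V_0 \xleftarrow{\ g\ } V_0 \xleftarrow{\ g\ } V_0 \xleftarrow{\ g\ } \cdots, \qquad g := f|_{V_0}:V_0\to V_0 .
\]
Thus $\mathrm{Sh}(K)$ is the pro-homotopy type of $\mathbf{V}$, and correspondingly $\mathcal{A}(K) = \bigcup_{m\geq 0} f^{-m}(\mathrm{int}\,P)$ is, up to homotopy, the mapping telescope of $V_0 \xrightarrow{g} V_0 \xrightarrow{g} V_0 \xrightarrow{g} \cdots$.

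The first step is to convert the hypothesis into a property of $g$. Since $g|_K = f|_K$ is homotopic to $(\mathrm{id}_{V_0})|_K$ and $K$ is closed in the ANR $V_0$, a routine neighbourhood argument for ANRs yields a neighbourhood $W$ of $K$ in $V_0$ together with a homotopy from $g|_W$ to the inclusion $W \hookrightarrow V_0$. As $\overline{V_m} \subseteq W$ once $m$ is large, composing this homotopy with $g^m$ shows $g^m \simeq g^{m+1}$ for all sufficiently large $m$; hence, for a suitable $m_0$, the self-map $e := g^{m_0}$ of $V_0$ satisfies $e \simeq e^2$, i.e. $e$ is a homotopy idempotent, and the subsequence of $\mathbf{V}$ indexed by the multiples of $m_0$ is cofinal with all its bonding maps equal to $e$. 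So $\mathrm{Sh}(K)$ is the pro-homotopy type of the idempotent tower $V_0 \xleftarrow{\,e\,} V_0 \xleftarrow{\,e\,} V_0 \xleftarrow{\,e\,}\cdots$ over the ANR $V_0$, and $\mathcal{A}(K)$ is the telescope of $V_0 \xrightarrow{\,e\,} V_0 \xrightarrow{\,e\,} V_0 \xrightarrow{\,e\,}\cdots$.

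What makes this work is that an idempotent splits at the level of homotopy (and homology) pro-groups, even though it may fail to split in the homotopy category of spaces: the induced idempotent endomorphism $e_*$ of each pro-group of the tower splits by pure algebra, and its image equals both the inverse limit of the tower and the direct limit along the telescope, i.e. the corresponding group of $\mathcal{A}(K)$. Hence $i : K \hookrightarrow \mathcal{A}(K)$ induces isomorphisms of all shape pro-groups, and I would finish, exactly as in the proof of Theorem~\ref{teo:main}, by invoking the shape-theoretic Whitehead theorem (the same bookkeeping with base points and with the components of $K$ and $\mathcal{A}(K)$ carries over); since $\mathcal{A}(K)$, being open in $M$, is a locally compact ANR and so has the homotopy type of a polyhedron, $K$ has polyhedral shape. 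An alternative, available when $M$ is finite-dimensional --- in particular in the low-dimensional situations of Sections~\ref{sec:2mfds} and~\ref{sec:3mfds} --- is to split $e$ itself in the homotopy category (homotopy idempotents on finite-dimensional complexes split, by Hastings--Heller), obtaining an ANR $Y$ with $\mathrm{Sh}(K)=\mathrm{Sh}(Y)$ directly. Either way, the one genuinely substantial point is this splitting of the homotopy idempotent produced by the hypothesis; everything else is the machinery already in place for Theorem~\ref{teo:main}.
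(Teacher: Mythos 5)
Your reduction of the hypothesis to a homotopy idempotent $e=g^{m_0}$ on the ANR $V_0$, and the identification of ${\rm pro}$-$\pi_d(K)$ with the idempotent tower and of $\pi_d(\mathcal{A}(K))$ with its telescope, are sound (modulo the deferred basepoint bookkeeping, which is genuine work here because $e\simeq e^2$ is only a free homotopy and $f$ need not fix the basepoint). The gap is in the finishing step. You cannot ``finish exactly as in the proof of Theorem \ref{teo:main}'': that proof does not invoke a shape-theoretic Whitehead theorem, it invokes the \emph{classical} Whitehead theorem, and the hypothesis that $K$ has polyhedral shape is precisely what makes this legitimate (Claim 1 replaces $K$ by a shape-equivalent polyhedron $Q$ and applies Whitehead to the honest map $Q\to M$ between spaces of CW type). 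At the point where you invoke it, polyhedral shape of $K$ is not yet known --- it is what you are trying to establish --- so the reduction is circular. The tool you would actually need, the genuine Whitehead theorem in the shape category, carries a finite-(shape-)dimensionality hypothesis and fails without it: there are infinite-dimensional continua (e.g.\ the Kahn continuum, an inverse limit along iterated Adams maps) all of whose homotopy pro-groups are pro-trivial, yet whose shape is nontrivial. Since the corollary is stated for an arbitrary locally compact metrizable ANR $M$ (the paper itself passes through Hilbert cube manifolds in Appendix \ref{app:trick}), $K$ need not be finite-dimensional, so this step is unjustified in the stated generality; and your alternative route via Hastings--Heller splitting is, by your own account, restricted to finite-dimensional $M$, so it does not prove the corollary either.

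The dimension-free way to cash in your idempotent --- and it is the paper's own proof --- is to split it in the shape category rather than only on pro-groups: the homotopy extended to a neighbourhood $V$, together with the maps $f^k|_V : V \longrightarrow f^kU$ (coherent up to homotopy by exactly your computation $g^m\simeq g^{m+1}$), defines a shape morphism $r : V \longrightarrow K$ with $rj={\rm id}_K$, where $j : K \subseteq V$. Hence $K$ is shape dominated by the ANR $V$, and therefore by a polyhedron; Remark \ref{rem:dominated} (which is where the idempotent-splitting content you correctly identify is hidden, packaged as FANR $\Rightarrow$ pointed FANR in Marde\v{s}i\'c--Segal) then gives that $K$ has polyhedral shape, and Theorem \ref{teo:main} concludes. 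So your key geometric observation is the right one, but it must be converted into a shape domination of $K$ by an ANR rather than fed into a Whitehead-type argument on pro-groups.
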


\begin{example} If every point in $K$ is a fixed point for $f$, then $i : K \subseteq \mathcal{A}(K)$ is a shape equivalence.
\end{example}

It is satisfying to observe that Corollary \ref{cor:fixed} allows us to recover Theorem \ref{teo:flujos}:

\begin{example} (Alternative proof of Theorem \ref{teo:flujos}) Suppose $\varphi$ is a flow on $M$ having an attractor $K$ with basin of attraction $\mathcal{A}(K)$. Then the inclusion $i : K \subseteq \mathcal{A}(K)$ is a shape equivalence.
\end{example}
\begin{proof}The time--one map $f : M \longrightarrow M$ defined by $f(p) := \varphi(p,1)$ also has $K$ as an attractor with the same basin of attraction $\mathcal{A}(K)$. Now, the map $H : K \times [0,1] \longrightarrow K$ given by $H(p,t) := \varphi(p,t)$ provides a homotopy between ${\rm id}_K$ and $f|_K$, so Corollary \ref{cor:fixed} applies and $i : K \subseteq \mathcal{A}(K)$ is a shape equivalence.
\end{proof}

Although Theorem \ref{teo:main} is useful for theoretical purposes, the assumption it places on $K$ may be hard to check. This difficulty can be circumvented when $M$ is a low dimensional manifold. The $2$--dimensional case is very neat:

\begin{theorem} \label{teo:2mfds} Let $K$ be a attractor in a $2$--manifold $M$. Then $i : K \subseteq \mathcal{A}(K)$ is a shape equivalence and $K$ has the shape of a compact polyhedron.
\end{theorem}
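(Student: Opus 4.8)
The plan is to reduce to Theorem \ref{teo:main}: since $K$ is compact, it suffices to prove that $K$ has the shape of a (necessarily compact) polyhedron, and I will do this by exhibiting a very rigid inverse sequence converging to $K$. Being a surface, $M$ is triangulable, so, starting from any trapping region and replacing it by a regular neighbourhood, I may assume the trapping region $P$ to be a compact $2$--manifold with boundary with $fP \subseteq {\rm int}\, P$; this alters neither $K = \bigcap_{n \geq 0} f^nP$ nor $\mathcal{A}(K) = \bigcup_{n \leq 0} f^nP$. Put $P_n := f^nP$ for $n \in \mathbb{Z}$, so that $\{P_n\}_{n \in \mathbb{Z}}$ is a family of compact surfaces with boundary, decreasing in $n$, all homeomorphic to one another via iterates of $f$, with $P_{n+1} \subseteq {\rm int}\, P_n$, $K = \bigcap_{n \geq 0} P_n$ and $\mathcal{A}(K) = \bigcup_{n \leq 0} P_n$. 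Everything will follow from the claim that \emph{for every $n \in \mathbb{Z}$ the inclusion $P_{n+1} \hookrightarrow P_n$ is a homotopy equivalence}.

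To prove the claim, set $R := \overline{P_n \setminus P_{n+1}}$, a compact surface with boundary $\partial R = \partial P_n \sqcup \partial P_{n+1}$. Additivity of the Euler characteristic along the circles $\partial P_{n+1}$ (which have $\chi = 0$) gives $\chi(P_n) = \chi(P_{n+1}) + \chi(R)$, and since $P_n \cong P_{n+1}$ this forces $\chi(R) = 0$; hence every component of $R$ is an annulus or a Möbius band. Because $P_{n+1}$ meets every component of $P_n$, one checks that every component of $R$ meets $\partial P_{n+1}$, so reconstructing $P_n$ from $P_{n+1}$ amounts to attaching each such piece either as a collar annulus joining a boundary circle of $P_{n+1}$ to one of $P_n$, or as an annulus glued along two boundary circles of $P_{n+1}$, or as a Möbius band capping a boundary circle of $P_{n+1}$. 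The last two operations change the orientability class, or the number of boundary circles, or the genus; since these invariants agree for $P_{n+1}$ and $P_n$, only collar annuli can occur, and therefore $P_{n+1} \hookrightarrow P_n$ is a homotopy equivalence. This is the one place where dimension $2$ enters essentially, and it is precisely what fails in dimension $3$: inside a solid torus there is a smaller solid torus winding around twice, because there the interpolating region need not have zero Euler characteristic.

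Granting the claim, $K = \bigcap_{n \geq 0} P_n$ is the limit of an inverse sequence of homotopy equivalences, so ${\rm Sh}(K) = {\rm Sh}(P_0)$; as $P_0$ is a compact surface with boundary it has the homotopy type of a finite graph, so $K$ has the shape of a compact polyhedron, and Theorem \ref{teo:main} yields that $i : K \subseteq \mathcal{A}(K)$ is a shape equivalence. (This last conclusion can also be reached without Theorem \ref{teo:main}: every compact subset of $\mathcal{A}(K)$ lies in some $P_n$ with $n$ very negative, so the claim makes $P_0 \hookrightarrow \mathcal{A}(K)$ an isomorphism on all homotopy groups, hence a homotopy equivalence since both spaces are ANRs; together with ${\rm Sh}(K) = {\rm Sh}(P_0)$ this gives that $i$ is a shape equivalence.) I do not anticipate any single deep step; the real work lies in the reductions — producing a genuine submanifold trapping region and handling the degenerate components of $P$ (disks, $2$--spheres, or the case where $M$ is closed) — and in the surface-classification bookkeeping of the middle paragraph.
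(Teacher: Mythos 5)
Your key claim --- that for a compact surface trapping region the inclusion $P_{n+1} = f^{n+1}P \hookrightarrow f^nP = P_n$ is always a homotopy equivalence --- is false, and the paper itself contains a counterexample: the Plykin--type construction of Section \ref{sec:2} (example (B)). There $P \subseteq \mathbb{R}^2$ is a disk with three holes, $fP \subseteq {\rm int}\,P$, and $fP$ is of course homeomorphic to $P$, yet the inclusion $fP \subseteq P$ kills the generator $f\alpha_2$ of $H_1(fP)$ (indeed the inclusion $f^3P \subseteq P$ is zero on $H_1$), so it is not a homotopy equivalence; moreover $\check{H}_1(K) = 0$ while $H_1(P) \cong \mathbb{Z}^3$, so your intermediate conclusion ${\rm Sh}(K) = {\rm Sh}(P_0)$ also fails. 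The precise point where your argument breaks is the Euler characteristic step: additivity only gives $\chi(R) = 0$ for the \emph{total} region $R = \overline{P_n \setminus P_{n+1}}$, not for each component, so you cannot conclude that every component is an annulus or a M\"obius band. In the Plykin example the components of $R$ are a disk ($\chi = 1$), two annuli and a disk with two holes ($\chi = -1$), summing to zero. Thus the remark that ``this is precisely what fails in dimension $3$'' is mistaken: property $(*)$ already fails in the plane, and this is exactly why the paper does not attempt to prove it.

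The paper's actual route is different and avoids $(*)$ altogether: Theorem \ref{teo:hom} (whose proof is the algebraic stabilization argument of Theorem \ref{teo:algebra}, using finiteness of $H_d(P;\mathbb{Z}_2)$ for a compact ANR neighbourhood $P$, not any homotopy equivalence between iterates) shows that $\check{H}_1(K;\mathbb{Z}_2)$ is finitely generated; then Lemma \ref{lem:2mfds}, which rests on results of Nowak, Krasinkiewicz and Trybulec about continua in surfaces, shows that such a $K$ has the shape of a finite wedge of circumferences, hence of a compact polyhedron; finally Theorem \ref{teo:main} gives that $i$ is a shape equivalence. So the conclusion you aim for is correct, but the homotopy-equivalence-of-nested-trapping-regions strategy cannot be repaired, and some input of the Čech-homological or shape-classification type is genuinely needed.
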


Suppose now that $K \subseteq \mathbb{R}^3$ is an attractor for a homeomorphism $f$ of $\mathbb{R}^3$. Example \ref{ejem:solenoid} implies that now this situation is more subtle. In Section \ref{sec:3mfds} we shall show that the inclusion $K \subseteq \mathcal{A}(K)$ is a shape equivalence if and only if $K$ has a shape theoretical property called pointed $1$--movability (Theorem \ref{teo:3var1}). This result has an interesting corollary:

\begin{corollary} \label{cor:loccon} Assume that an attractor $K \subseteq \mathbb{R}^3$ has at most countably many path connected components. Then $i : K \subseteq \mathcal{A}(K)$ is a shape equivalence and, furthermore, $K$ has the shape of a compact polyhedron.
\end{corollary}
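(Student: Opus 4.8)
The plan is to deduce the corollary from Theorem~\ref{teo:3var1}, which (since $K\subseteq\mathbb{R}^3$ is an attractor) asserts that $i:K\subseteq\mathcal{A}(K)$ is a shape equivalence exactly when $K$ is pointed $1$--movable; once that is established, Theorem~\ref{teo:main} gives that $K$ has the shape of a polyhedron, and it will only remain to see that this polyhedron can be taken compact. To study pointed $1$--movability, write $K=\bigcap_{n\ge0}f^nP$ for a trapping region $P$ that we take to be a compact, connected PL $3$--manifold with boundary, and reduce (routinely, since then $K$ has at most countably many connected components) to the case in which $K$ is connected. The sets $f^nP$ form a basis of neighbourhoods of $K$, they are all PL--homeomorphic to $P$, and the inclusions $f^{n+1}P\hookrightarrow f^nP$ realize, after transport by the iterates of $f$, a single endomorphism $\phi$ of the finitely presented group $G:=\pi_1(P)$. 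Hence the fundamental pro--group of $K$ is the stationary inverse sequence $G\xleftarrow{\ \phi\ }G\xleftarrow{\ \phi\ }G\xleftarrow{\ \phi\ }\cdots$, and $K$ is pointed $1$--movable if and only if this sequence is movable.

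The core of the argument is the implication: \emph{if $K$ has at most countably many path components, then $G\xleftarrow{\phi}G\xleftarrow{\phi}\cdots$ is movable}. I would prove its contrapositive, using that $K$ lies in $\mathbb{R}^3$. The open set $\mathcal{A}(K)\setminus K=\bigcup_{n\in\mathbb{Z}}\bigl(f^nP\setminus f^{n+1}\,\mathrm{int}\,P\bigr)$ is an infinite telescope of copies of the compact $3$--manifold $W:=\overline{P\setminus fP}$, consecutive copies being glued along $\partial P$ by $f$; so whether a loop lying very near $K$ contracts in a slightly larger neighbourhood is governed precisely by $G$ and $\phi$. If the stationary sequence is not movable, then at some finite stage there is an essential loop that stays essential — and incompressibly so — under arbitrarily deep pushes towards $K$. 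Using general position together with the classical $3$--manifold tools (the loop and sphere theorems, incompressibility of the surfaces $f^n(\partial P)$), one should be able to promote this algebraic obstruction to a topological one, namely a Cantor set of points of $K$ no two of which can be joined by a path inside $K$. That contradicts the hypothesis, so the sequence is movable and $K$ is pointed $1$--movable.

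By Theorem~\ref{teo:3var1}, $i$ is then a shape equivalence, and Theorem~\ref{teo:main} gives $K$ the shape of a locally finite polyhedron $Q$. Since $K$ is compact and finite--dimensional, $Q$ is finitely dominated; and since a polyhedron has a constant fundamental pro--group, the stationary sequence $G\xleftarrow{\phi}\cdots$ is pro--isomorphic to a constant group $\pi_1(Q)$, which is therefore isomorphic to a finitely generated subgroup of $G$ and hence, by the coherence of $3$--manifold groups, finitely presented. With $\dim K\le3$ and the vanishing of the pertinent finiteness obstruction one then replaces $Q$ by a compact polyhedron, exactly as in the corresponding part of Theorem~\ref{teo:2mfds}. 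This completes the proof.

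The main obstacle is the geometric step in the second paragraph: converting the \emph{algebraic} failure of movability of $G\xleftarrow{\phi}\cdots$ into \emph{genuinely} uncountably many path components of $K$. One has to be sure the obstructing loops near $K$ truly prevent the existence of paths \emph{inside} $K$ and are not dissolved by the possibly wild way $K$ sits in $\mathbb{R}^3$; this is exactly where the tameness available in dimension $3$ — and the machinery behind Theorem~\ref{teo:3var1} — is indispensable, and where I expect nearly all of the work to be. A secondary, still delicate point is the passage to a \emph{compact} polyhedron: Theorem~\ref{teo:main} on its own permits an infinite one, and it is the stationary structure special to attractors, not pointed $1$--movability by itself, that closes that gap.
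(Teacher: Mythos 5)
The frame of your argument (reduce to Theorem~\ref{teo:3var1} by showing that $K$ is pointed $1$--movable) is the right one, but the crucial implication --- \emph{at most countably many path components implies pointed $1$--movability} --- is not proved in your proposal; it is only announced as a program. In the paper this step is not re-proved at all: it is exactly the theorem of Krasinkiewicz and Minc (\cite[Corollary 3.2, p.~152]{krasinkiewiczminc1}), a substantial result about arbitrary continua, and once it is invoked the corollary follows in two lines, since Mittag--Leffler for ${\rm pro-}\pi_1(K,*)$ means $(S)$ stabilizes, i.e.\ $F_n=F_{n+1}$ for some $n$, which is the hypothesis of Theorem~\ref{teo:3var1}. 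Your plan to establish the contrapositive by hand --- ``promote the algebraic non-movability of the stationary sequence $G\xleftarrow{\phi}G\xleftarrow{\phi}\cdots$ to a Cantor set of points of $K$ no two of which are joined by a path in $K$, using general position, the loop and sphere theorems, and incompressibility of the $f^n(\partial P)$'' --- is precisely the hard content you would need to supply, and you yourself flag it as the place ``where I expect nearly all of the work to be.'' As written there is no argument there: nothing guarantees that an essential loop surviving arbitrarily deep pushes toward $K$ obstructs path-connectivity \emph{inside} $K$ (the surfaces $f^n(\partial P)$ need not be incompressible, and $K$ may sit wildly), and no construction of the claimed Cantor set is given. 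So the core of the proof is missing.

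A secondary point: your third paragraph is both unnecessary and, as written, unjustified. Theorem~\ref{teo:3var1} already asserts (via Kadlof's theorem, using Theorem~\ref{teo:hom} for $\check H_2$ and Theorem~\ref{teo:algebra} for countability of $\check\pi_1$) that when the stabilization condition holds, $K$ has the shape of a \emph{compact} polyhedron; there is no need to pass through Theorem~\ref{teo:main}, finite domination, coherence of $3$--manifold groups, and a Wall-type finiteness obstruction, and your assertion that ``the pertinent finiteness obstruction'' vanishes is not substantiated. Replace that paragraph by a direct appeal to the second conclusion of Theorem~\ref{teo:3var1}, and replace your second paragraph by the Krasinkiewicz--Minc citation (or by an actual proof of that implication, which would be a significant undertaking).
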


In practice one usually detects that $f$ has an attractor by finding a trapping region $P$, so that $f$ has an attractor $K \subseteq {\rm int}\ P$ whose basin of attraction contains $P$ and extends beyond it. This approach can be exploited to obtain an almost explicit characterization of when the inclusion $i : K \subseteq \mathcal{A}(K)$ is a shape equivalence as follows. Choose a point $* \in P$ and suppose for our present expository purposes that $P$ is connected and $f$ leaves $*$ fixed. The restriction $f|_P : P \longrightarrow P$ induces a homomorphism $\Phi = (f|_P)_* : \pi_1(P,*) \longrightarrow \pi_1(P,*)$. Consider the decreasing sequence of subgroups of $\pi_1(P,*)$ \[\pi_1(P,*) \geq {\rm im}\ \Phi \geq {\rm im}\ \Phi^2 \geq {\rm im}\ \Phi^3 \geq \ldots\] and say that $\Phi$ \emph{stabilizes} if ${\rm im}\ \Phi^n = {\rm im}\ \Phi^{n+1}$ for some $n$ (one sees easily that then ${\rm im}\ \Phi^n = {\rm im}\ \Phi^{n+1} = {\rm im}\ \Phi^{n+2} = \ldots$). Then the following holds:

\begin{theorem} \label{teo:3var} The inclusion $i : K \subseteq \mathcal{A}(K)$ is a shape equivalence if, and only if, $\Phi$ stabilizes. When this is the case, $K$ has the shape of a compact polyhedron.
\end{theorem}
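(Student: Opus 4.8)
The plan is to reduce Theorem~\ref{teo:3var} to the pointed $1$--movability criterion of Theorem~\ref{teo:3var1}, after rewriting both the topological conclusion and the algebraic hypothesis in terms of the single tower of groups
\[
\pi_1(P,*)\xleftarrow{\ \Phi\ }\pi_1(P,*)\xleftarrow{\ \Phi\ }\pi_1(P,*)\leftarrow\cdots .
\]
First I would fix the inverse system representing $K$. Shrinking $P$ if necessary, we may assume it is a compact, connected polyhedron with $f(*)=*$ (the general case reduces to this one by the usual change--of--basepoint arguments along the orbit $\{f^n(*)\}$ and, when $P$ is disconnected, by passing to a cofinal subsystem). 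Since $f^n$ maps $P$ homeomorphically onto $f^nP$, fixes $*$, and satisfies $f^{-n}(f^{n+1}P)=fP$, each inclusion $f^{n+1}P\subseteq f^nP$ becomes, after transport by these homeomorphisms, the single map $f|_P:P\to P$; consequently
\[
K=\bigcap_{n\ge 0}f^nP=\varprojlim\bigl(P\xleftarrow{\ f|_P\ }P\xleftarrow{\ f|_P\ }P\leftarrow\cdots\bigr).
\]
Thus $K$ is connected (a nested intersection of the connected compacta $f^nP$), $*$ is a common basepoint, and $\mathrm{pro}\text{-}\pi_1(K,*)$ is represented by the tower above, all of whose bonding maps equal $\Phi$. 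By Theorem~\ref{teo:3var1} the inclusion $i$ is a shape equivalence if and only if $K$ is pointed $1$--movable, and since $K$ is connected this is equivalent to the pro--group $\{\pi_1(P,*),\Phi\}$ being movable.

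It therefore remains to establish the algebraic equivalence that the constant tower $\{G,\Phi\}$ (with $G=\pi_1(P,*)$) is movable if and only if $\Phi$ stabilizes; this is exactly what Theorem~\ref{teo:algebra} provides in the form needed here. If the tower is movable, then for its first term there is an index $j$ such that for every $k\ge j$ there is a homomorphism $r:G\to G$ with $\Phi^k\circ r=\Phi^j$, whence $\mathrm{im}\,\Phi^j\subseteq\mathrm{im}\,\Phi^k\subseteq\mathrm{im}\,\Phi^j$; so $\mathrm{im}\,\Phi^k=\mathrm{im}\,\Phi^j$ for all $k\ge j$, i.e.\ $\Phi$ stabilizes. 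Conversely, suppose $\mathrm{im}\,\Phi^N=\mathrm{im}\,\Phi^{N+1}=\cdots=:H$, so that $\Phi$ restricts to a \emph{surjection} $\Phi|_H:H\to H$. The level homomorphisms $H\hookrightarrow G$ and $\Phi^N:G\to H$ define morphisms of towers between $\{G,\Phi\}$ and $\{H,\Phi|_H\}$ whose two composites are the $N$--fold bonding maps of the respective towers, hence represent the identities in the pro--category; thus $\mathrm{pro}\text{-}\pi_1(K,*)$ is pro--isomorphic to $\{H,\Phi|_H\}$, a tower with surjective bonding maps. Any such tower is Mittag--Leffler, a fortiori movable, so $\{G,\Phi\}$ is movable. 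Combining everything: $i$ is a shape equivalence $\iff$ $K$ is pointed $1$--movable $\iff$ $\{\pi_1(P,*),\Phi\}$ is movable $\iff$ $\Phi$ stabilizes.

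For the last assertion, when $\Phi$ stabilizes Theorem~\ref{teo:main} already yields that $K$ has the shape of a polyhedron; that this polyhedron can be taken \emph{compact} is, as in the proof of Theorem~\ref{teo:3var1}, a consequence of the fact that $\mathcal{A}(K)=\bigcup_{n\le 0}f^nP$ is an open subset of $\mathbb{R}^3$ with finitely generated fundamental group (namely $\varinjlim\{\pi_1(P,*),\Phi\}$, a quotient of $\mathrm{im}\,\Phi^N$), so that by Scott's compact core theorem it has the homotopy type of a finite complex; since $i$ is a shape equivalence, $K$ has the shape of this compact polyhedron.

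The genuine difficulty is packaged in Theorem~\ref{teo:3var1}: it is there that the $3$--manifold topology (loop and sphere theorems, compact cores) is used to reduce the verification of all the shape pro--homotopy isomorphisms required by the Whitehead theorem to the single condition that $K$ be pointed $1$--movable. Granting that result, the only mildly delicate point in the argument above is that $\Phi|_H$ need not be injective, so one cannot simply invert it and land at a \emph{constant} tower; this is bypassed by passing instead to the pro--isomorphic \emph{surjective} tower $\{H,\Phi|_H\}$, which is automatically Mittag--Leffler.
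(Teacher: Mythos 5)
Your main reduction is the same as the paper's own proof of the ``weak version'': when $f(*)=*$, conjugating the inclusions $f^{n+1}P\subseteq f^nP$ by the homeomorphisms $f^n\|_P$ identifies ${\rm pro-}\pi_1(K,*)$ with the constant tower $\{\pi_1(P,*),\Phi\}$, so that $F_n={\rm im}\ \Phi^n$ and Theorem \ref{teo:3var1} does the rest. The genuine gap is your parenthetical treatment of the general basepoint: ``shrinking $P$ if necessary, we may assume \dots $f(*)=*$'', to be handled by ``the usual change-of-basepoint arguments along the orbit''. No shrinking of $P$ produces a fixed basepoint, and when $*$ is not fixed the equality $F_n={\rm im}\ \Phi^n$ simply fails ($\Phi$ now involves the auxiliary path $\gamma$), so basepoint bookkeeping does not reduce the general case to the fixed-point case. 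This is exactly the point the paper flags as the ``cost'' of removing the hypothesis: its strong version replaces $f$ by $f'=h_1f$, where $h_1$ is the end of an isotopy supported near $\gamma$ (Lemma \ref{lem:isotopia}), checks that $\mathcal{A}(K)=\mathcal{A}(K')$ and $\Phi'=\Phi$, and uses Proposition \ref{prop:inv} to transfer the shape-equivalence question from $K$ to the attractor $K'$ of $f'$. If you intend only the expository statement with $f(*)=*$, your argument is essentially complete; the general statement needs an argument of this kind.

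Two further steps are wrong as written, though both are repairable or unnecessary. First, ``Mittag--Leffler, a fortiori movable'' reverses the implication: movable towers of groups are Mittag--Leffler, but not conversely (the surjective tower $\mathbb{Z}/4\leftarrow\mathbb{Z}/8\leftarrow\mathbb{Z}/16\leftarrow\cdots$ is Mittag--Leffler but not movable), and pointed $1$--movability of a continuum is equivalent to the Mittag--Leffler property of ${\rm pro-}\pi_1$, not to movability of that pro-group. Fortunately you do not need movability at all: your pro-isomorphism with the surjective tower $\{H,\Phi|_H\}$ already gives Mittag--Leffler, and for the constant tower $\{G,\Phi\}$ the images of the composite bonds into the first term are exactly ${\rm im}\ \Phi^n$, so ``Mittag--Leffler $\Leftrightarrow$ $\Phi$ stabilizes'' is immediate (rigidity, as in Proposition \ref{prop:one}, handles the other levels). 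Second, the compactness argument via Scott's core theorem does not work: for an open subset of $\mathbb{R}^3$, finitely generated $\pi_1$ does not imply finite homotopy type (the complement of an infinite discrete set is simply connected with infinitely generated $H_2$), and Scott's core is only a $\pi_1$-isomorphism, not a homotopy equivalence in general. This slip is harmless only because the claim is superfluous: the conclusion that $K$ has the shape of a \emph{compact} polyhedron is already part of Theorem \ref{teo:3var1}, where it comes from Kadlof's theorem.
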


The condition that $*$ should be a fixed point of $f$ will be removed when we prove Theorem \ref{teo:3var}, at the cost of complicating slightly the definition of $\Phi$. The important point to observe is that both $\pi_1(P,*)$ and $\Phi$ are explicitly computable:
\begin{itemize}
	\item In our present case $M = \mathbb{R}^3$ it is easy to see that $P$ can always be chosen to be a manifold (with boundary). Thus it is possible to obtain a finite presentation of $\pi_1(P,*)$ in terms of generators $g_i$ and relations $r_j$.
	\item Also, direct inspection of how $f$ transforms the loops $g_i$ allows us to write $\Phi(g_i)$ as words in the $g_i$.
\end{itemize}
With these data it is possible to obtain finite systems of generators for ${\rm im}\ \Phi$, ${\rm im}\ \Phi^2$, etc. Then in order to decide whether $\Phi$ stabilizes it is enough to check whether the generators for ${\rm im}\ \Phi^n$ belong to ${\rm im}\ \Phi^{n+1}$ for some $n$, which is a group theoretical problem that in certain cases has a combinatorial solution. In Section \ref{sec:3mfds} we shall exploit an algorithm due to Stallings to deal with the case when $P$ is a handlebody, a common ocurrence in applications.
\medskip

Let us illustrate Theorem \ref{teo:3var} with a very easy example related to Example \ref{ejem:solenoid}:

\begin{example} Assume that the trapping region $P$ is a solid torus. Under the identification $\pi_1(P,*) = \mathbb{Z}$ we can write $\Phi(z) = kz$ for some integer $k \in \mathbb{Z}$. Then $i : K \subseteq \mathcal{A}(K)$ is a shape equivalence if, and only if, $k = 0$ or $k = \pm 1$.
\end{example}
\begin{proof} The sequence ${\rm im}\ \Phi^n$ reads \[\mathbb{Z} \geq {\rm gp}(k) \geq {\rm gp}(k^2) \geq {\rm gp}(k^3) \geq \ldots\] where ${\rm gp}(A)$ means the subgroup of $\mathbb{Z}$ generated by the set $A \subseteq \mathbb{Z}$. According to Theorem \ref{teo:3var} $i$ is a shape equivalence if and only if the equality ${\rm gp}(k^n)={\rm gp}(k^{n+1})$ holds for some $n$, which happens precisely when the generator $k^n$ of the bigger group belongs to the smaller group ${\rm gp}(k^{n+1})$. If $k = 0$ or $k = \pm 1$ this is certainly true already for $n=0$. Conversely, if $k^n \in {\rm gp}(k^{n+1})$ then there exists $z \in \mathbb{Z}$ such that $k^n = k^{n+1}z$ so either $k = 0$ or, if not, cancelling $k^n$ yields $1 = kz$ which implies $k = \pm 1$.
\end{proof}

It will be clear later on that when $k = 0$ then both $K$ and $\mathcal{A}(K)$ have the shape of a point whereas when $k = \pm 1$ they both have the shape of $\mathbb{S}^1$. Also, choosing $P = T$ for the dyadic solenoid of Example \ref{ejem:solenoid} one has $\Phi(z) = 2z$ which shows again that $i$ is not a shape equivalence.

\section{An informal introduction to the proofs} \label{sec:2}

The proof of Theorem \ref{teo:flujos} is very transparent from the geometric point of view because the flow can be used to define suitable homotopies which lead to an explicit construction of the shape inverse for the inclusion $i : K \subseteq \mathcal{A}(K)$. Unfortunately the same is not true of its discrete counterparts: the proofs of Theorems \ref{teo:main}, \ref{teo:2mfds} and \ref{teo:3var} are somewhat involved and mostly algebraic, and the shape inverse for $i$ is not constructed explicitly but just shown to exist via the Whitehead theorem.

In this section we would like to point out the main difference between the continuous and the discrete case and explain why algebraic topology seems unavoidable in handling the latter. For the sake of simplicity we shall not discuss whether $i$ is a shape equivalence but just whether it induces an isomorphism between the one-dimensional homology groups of $K$ and $\mathcal{A}(K)$. This is much weaker than being a shape equivalence but enough for our current heuristic purposes, since it will unearth the main difficulties without any technical burden.

\subsection{The continuous case} Let $\varphi$ be a continuous flow in $\mathbb{R}^2$ that has an attractor $K$ with basin of attraction $\mathcal{A}(K)$. It is a standard result in dynamics that $K$ has a compact neighbourhood $P \subseteq \mathcal{A}(K)$ that is positively invariant (that is, $fP \subseteq P$) and such that the trajectories of $\varphi$ cross the boundary of $P$ transversally\footnote{For instance, one may take $P := L^{-1}[0,1]$, where $L$ is any uniformly unbounded (that is, proper) Lyapunov function for $K$ \cite[Theorem 2.13, p. 73]{bhatiaszego1}.}, if we may borrow this terminology from differential geometry and use it loosely. Let $f : \mathbb{R}^2 \longrightarrow \mathbb{R}^2$ be the time-one map of the flow, so that $f(p) := \varphi(p,1)$. The key observation is that \[(*) \text{ the inclusion } fP \subseteq P \text{ is a homotopy equivalence}.\] The reason is that there exists a strong deformation retraction of $P$ onto $fP$ which is provided by the flow just by pushing every point in $P \backslash fP$ forwards along its trajectory until it first hits $fP$ (the fact that the trajectories of $\varphi$ cross the boundary of $P$ and also $fP$ transversally makes this a continuous map). Property (*) can immediately be generalized to \[(**) \text{ the inclusion } f^{n+1}P \subseteq f^nP \text{ is a homotopy equivalence}.\] Indeed: $f$ takes the pair $(P,fP)$ homeomorphically onto $(fP,f^2P)$, which means geometrically that $fP$ lies inside $P$ in the same way $f^2P$ lies inside $fP$, and so because of (*) the inclusion $f^2P \subseteq fP$ is also a homotopy equivalence. The same is obviously true of each inclusion $f^{n+1}P \subseteq f^nP$.

It is now easy to relate the homology of $K$ and $\mathcal{A}(K)$. Since $K$ is the intersection of the decreasing sequence of sets $P \supseteq fP \supseteq f^2P \supseteq \ldots$ because $P$ is a compact subset of $\mathcal{A}(K)$ and is therefore attracted by $K$, its \v{C}ech homology $\check{H}_1(K)$ is (by definition) the inverse limit of the sequence \[H_1(P) \longleftarrow H_1(fP) \longleftarrow H_1(f^2P) \longleftarrow \ldots\] where each arrow is induced by inclusion. But (**) implies that each arrow is an isomorphism, and so $\check{H}_1(K) \cong H_1(P)$. Writing $\mathcal{A}(K)$ as the union of the increasing sequence of sets $P \subseteq f^{-1}P \subseteq f^{-2}P \subseteq \ldots$ its homology is the direct limit of the sequence \[H_1(P) \longrightarrow H_1(f^{-1}P) \longrightarrow H_1(f^{-2}P) \longrightarrow \ldots\] so the same argument as before shows that $H_1(\mathcal{A}(K)) \cong H_1(P)$. Thus $K$, $P$ and $\mathcal{A}(K)$ all have isomorphic homology.

\subsection{The discrete case: what goes wrong?} Let us turn now to discrete dynamics. Recall that it was property (*) which allowed us to prove, in the continuous case, that the inclusion $K \subseteq \mathcal{A}(K)$ induces an isomorphism in homology. Whereas in the continuous case this property is an automatic consequence of the existence of the flow, in the discrete case it needs to be established \emph{ad hoc} and in fact it may very well happen that (*) does not hold. We shall illustrate this with a couple of examples related to an attractor introduced by Plykin \cite[p. 239]{plykin1} when considering a question of Smale. Following his original example a number of variants have been constructed, and we are going to build on a particular design taken from Kuznetsov \cite[p. 123]{kuznetsov1}.
\smallskip

(A) As a first example, consider Figure \ref{fig:plykin1}. Its left panel shows a disk $P$ with three holes whose boundary curves are labeled $\alpha_i$ for later reference. In panel ({\it b}\/) the disk $P$ is decomposed as the union of three closed regions $A_i$ shown in different shades of gray for pictorial convenience. By first compressing $\mathbb{R}^2$ in the $y$--direction and then stretching in the $x$--direction and bending conveniently it is easy to see that there exists a homeomorphism $f$ of the whole plane that takes each of regions $A_i$ onto the region $fA_i$ of the same colour in the fashion depicted in Figure \ref{fig:plykin2}.({\it a}\/). Notice that the holes of the disk are likewise stretched, so that $fP \subseteq {\rm int}\ P$. Therefore $P$ is a trapping region for $f$ and there exists an attractor $K$ in the interior of $P$; specifically, $K = \bigcap_{n \geq 0} f^nP$ and its basin of attraction is $\mathcal{A}(K) = \bigcup_{n \leq 0} f^nP$. Of course $P$ is then a compact, positively invariant neighbourhood of $K$ contained in $\mathcal{A}(K)$ and it is attracted by $K$.

\begin{figure}[h]
\begin{pspicture}(0,0)(11.6,4.4)
\rput[bl](0,0){\scalebox{0.6}{\includegraphics{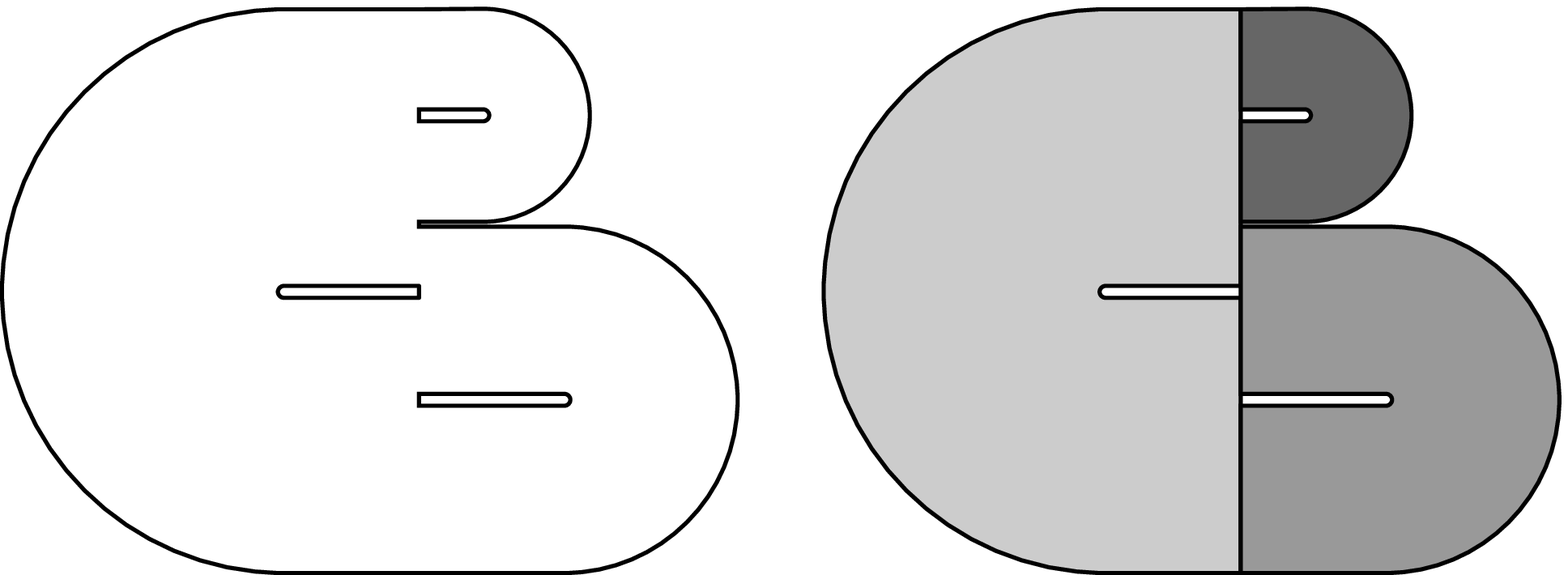}}}
\rput[bl](0.2,4){({\it a}\/)} \rput[bl](6.2,4){({\it b}\/)}
\rput(4.8,2.8){$P$} \rput(2,1.9){$\alpha_1$} \rput(4.4,1.1){$\alpha_2$} \rput(3.8,3.7){$\alpha_3$}
\rput(7.2,1){$A_1$} \rput(10.7,0.5){$A_2$} \rput(10,3.8){$A_3$}
\end{pspicture}
\caption{Setup for a Plykin-type attractor \label{fig:plykin1}}
\end{figure}

\begin{figure}[h]
\begin{pspicture}(0,0)(11.6,4.4)
\rput[bl](0.2,4){({\it a}\/)} \rput[bl](6.2,4){({\it b}\/)}
\rput[bl](0,0){\scalebox{0.6}{\includegraphics{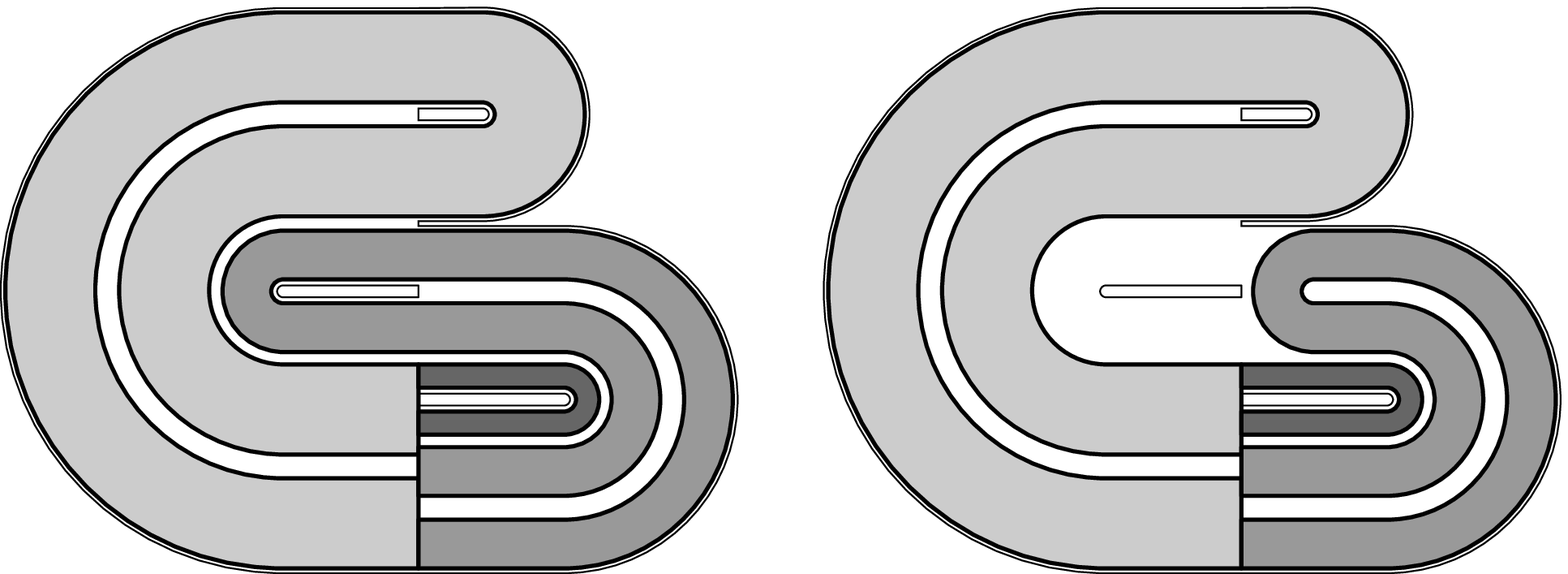}}}
\end{pspicture}
\caption{Setup for a Plykin-type attractor \label{fig:plykin2}}
\end{figure}

Now observe that the inclusion $fP \subseteq P$ is a homotopy equivalence; that is, property (*) holds. In fact, a strong deformation retraction of $P$ onto $fP$ can easily be imagined by looking at Figure \ref{fig:plykin2}.({\it a}\/): simply enlarge each hole of $P$ pushing it outwards until it matches the hole of $fP$ that contains it (also, push the outer rim of $P$ slightly inwards until it matches the outer boundary of $fP$). Thus, property (*) holding true, the same reasoning as in the continuous case proves that the inclusion $K \subseteq \mathcal{A}(K)$ induces an isomorphism in homology. Notice however that, because now the dynamics is discrete and there is no flow, the strong deformation retraction of $P$ onto $fP$ had to be constructed \emph{ad hoc}.
\smallskip

(B) Let us move on to a variant of the above construction, where the map $f$ is defined in such a way that the regions $fA_i$ look as shown in Figure \ref{fig:plykin2}.({\it b}\/). As before $fP \subseteq {\rm int}\ P$ and so there is an attractor $K = \bigcap_{n \geq 0} f^nP$ inside $P$ whose basin of attraction is $\mathcal{A}(K) = \bigcup_{n \leq 0} f^nP$. We are going to see that, even though now property (*) does \emph{not} hold, the inclusion $K \subseteq \mathcal{A}(K)$ still induces isomorphisms in homology. Evidently to show this we need a new, different method of proof.

Since $P$ is a disk with three holes, its first homology group is isomorphic to $\mathbb{Z}^3$ and we may choose as its generators the simple closed curves $\alpha_i$ which bound the holes of $P$ as in Figure \ref{fig:plykin1}.({\it a}\/). Consider how $f$ transforms the loops $\alpha_i$. From Figure \ref{fig:plykin2}.({\it b}\/) we see that $f \alpha_1$ encircles the topmost hole of $P$, $f \alpha_2$ encircles no hole of $P$, and $f \alpha_3$ encircles the hole at the bottom of $P$. Thus if we use the symbol $\sim$ to denote that two curves are homologous in $P$, we have \[f \alpha_1 \sim \alpha_3, \ \ f \alpha_2 \sim 0, \ \ f \alpha_3 \sim \alpha_2.\] This is not very illuminating yet, but let us compute what happens with the iterates $f^2$ and $f^3$ of these loops. Applying $f$ to the above relations we get \[f^2 \alpha_1 \sim f \alpha_3 \sim \alpha_2, \ \ f^2 \alpha_2 \sim 0, \ \ f^2 \alpha_3 \sim f \alpha_2 \sim 0\] and similarly \[f^3 \alpha_1 \sim f \alpha_2 \sim 0, \ \ f^3 \alpha_2 \sim 0, \ \ f^3 \alpha_3 \sim 0.\] Now, in the same way that the $\alpha_i$ generate the homology of $P$, its images $f^3 \alpha_i$ generate the homology of $f^3 P$ since $f^3 |_P : P \longrightarrow f^3 P$ is a homeomorphism onto. The above relations tell us that each of the generators $f^3 \alpha_i$ is killed when included in $P$, or more formally that the inclusion $f^3P \subseteq P$ induces the zero homomorphism $H_1(f^3P) \stackrel{0}{\longrightarrow} H_1(P)$. For notational ease set $g := f^3$. Evidently $K = \bigcap_{n \geq 0} g^nP$ and so $\check{H}_1(K)$ is the inverse limit of the sequence \[H_1(P) \longleftarrow H_1(gP) \longleftarrow H_1(g^2P) \longleftarrow \ldots\] where each arrow is induced by the inclusion and is therefore $0$ as we have just seen. Thus the inverse limit of the sequence is zero, so $\check{H}_1(K) = 0$. Going backwards, the basin of attraction is the union of the increasing sequence $P \subseteq g^{-1}P \subseteq g^{-2}P \subseteq \ldots$ and so its homology is the direct limit of the sequence \[H_1(P) \longrightarrow H_1(g^{-1}P) \longrightarrow H_1(g^{-2}P) \longrightarrow \ldots\] where again each arrow is induced by the inclusion and is therefore the zero homomorphism. Hence $H_1(\mathcal{A}(K)) = 0$ and so the inclusion $K \subseteq \mathcal{A}(K)$ certainly induces an isomorphism in homology, even though neither the inclusion $K \subseteq P$ nor $P \subseteq \mathcal{A}(K)$ do.

\subsection{Our strategy of proof} The morale of the above examples is that in the discrete case ---in sharp contrast with the continuous case--- there may be no relation whatsoever between an attractor $K$ and its positively invariant neighbourhoods $P \subseteq \mathcal{A}(K)$. One way around this difficulty consists in \emph{assuming} that $K$ has a compact neighbourhood $P \subseteq \mathcal{A}(K)$ such that the inclusion $P \subseteq \mathcal{A}(K)$ is a homotopy equivalence. This assumption is essentially the one adopted in the papers by Gobbino and Mor\'on and Ruiz del Portal \cite{gobbino1,moronpaco1} and it implies that property (*) holds, so the same technique as in the continuous case can be applied.

Our approach to the problem is entirely different, roughly along the lines of the discussion of example (B) above. Instead of homology we will need to work with fundamental groups (and the higher homotopy groups) so we can use the Whitehead theorem to avoid having to construct explicitly a shape inverse for $i$. Much as we did earlier, we shall consider biinfinite sequences such as \[\ldots \longleftarrow \pi_1(f^{-1}P) \longleftarrow \pi_1(P) \longleftarrow \pi_1(P) \longleftarrow \ldots\] whose inverse limit is $\check{\pi}_1(K)$ and whose direct limit is $\pi_1(\mathcal{A}(K))$, and try to show that these two are isomorphic via $i_* : \check{\pi}_1(K) \longrightarrow \pi_1(\mathcal{A}(K))$. In Theorem \ref{teo:main} we shall prove that:
\begin{itemize}
	\item $i_*$ is injective as a \emph{geometric} consequence of the hypothesis that $K$ has polyhedral shape,
	\item $i_*$ is surjective as an \emph{algebraic} consequence of the fact that certain decrea{\-}sing sequence of subgroups \[\pi_1(P) \geq F_1 \geq F_2 \geq \ldots\] eventually stabilizes, which is closely related to the Mittag--Leffler property of ${\rm pro-}\pi_1(K)$.
\end{itemize}

After proving this in Section \ref{sec:main}, in Section \ref{sec:algebra} we point out a useful fact about the proof of Theorem \ref{teo:main}. Essentially, we observe that whenever $\pi_1(P)$ has certain algebraic properties, if $i_*$ is surjective then it is automatically injective. It so happens that, for dynamical systems on manifolds of low dimension, $P$ can always chosen to have those properties and it is exploting them that we arrive at Theorems \ref{teo:2mfds} and \ref{teo:3var}.

\section{Proof of Theorem \ref{teo:main}} \label{sec:main}

{\bf Notation.} Throughout this section $M$ will always be a locally compact absolute neighbourhood retract for metrizable spaces (ANR for short). If the reader is not familiar with ANRs, she may take an ANR to mean a locally contractible space, since both concepts are equivalent for finite dimensional spaces. References for this and some other results about ANRs that we shall need can be found in Appendix \ref{ap:shape}.
\medskip

We begin with two easy auxiliary propositions: Proposition \ref{prop:potencia} allows us to perform the useful operation of replacing $f$ by one of its powers; Proposition \ref{prop:basica} shows that we may assume that $K$ and $\mathcal{A}(K)$ are connected, which is convenient to apply Whitehead's theorem.

\begin{proposition} \label{prop:potencia} Let $K$ be an attractor for $f$ with basin of attraction $\mathcal{A}(K)$. Then $K$ is also an attractor for $f^N$ (for any $N \geq 1$) with the same basin of attraction.
\end{proposition}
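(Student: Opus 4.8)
The plan is to verify the two defining properties of an attractor directly from the definitions, using the trapping–region formulation recalled in Section~\ref{sec:statements}. First I would check invariance: since $f(K)=K$, applying $f$ repeatedly gives $f^N(K)=K$ for every $N\geq 1$, so $K$ is invariant under $f^N$.

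Next I would address the attraction property. Let $U$ be a neighbourhood of $K$ witnessing that $K$ is an attractor for $f$, so every compact $C\subseteq U$ is attracted by $K$ under $f$. I claim the same $U$ works for $f^N$. Fix a compact $C\subseteq U$ and a neighbourhood $V$ of $K$. By hypothesis there is $n_0$ such that $f^n(C)\subseteq V$ for all $n\geq n_0$; in particular, for every $m$ with $Nm\geq n_0$ we have $(f^N)^m(C)=f^{Nm}(C)\subseteq V$, i.e. $(f^N)^m(C)\subseteq V$ for all $m\geq \lceil n_0/N\rceil$. Hence $C$ is attracted by $K$ under $f^N$, so $K$ is an attractor for $f^N$ and its basin contains $U$.

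It remains to see that the basin of attraction is exactly the same set. Since $\mathcal{A}_{f^N}(K)$ is by definition the maximal neighbourhood of $K$ all of whose compact subsets are attracted by $K$ under $f^N$, the previous paragraph shows $\mathcal{A}_f(K)\subseteq \mathcal{A}_{f^N}(K)$. For the reverse inclusion, note that $\mathcal{A}_{f^N}(K)$ is $f^N$–invariant and, being an open invariant neighbourhood of $K$, one has $\mathcal{A}_{f^N}(K)=\bigcup_{m\leq 0} f^{Nm}P$ for a suitable trapping region $P$ of $f^N$; but each $f^{Nm}P$ is contained in $\bigcup_{n\leq 0} f^n P \subseteq \mathcal{A}_f(K)$, so $\mathcal{A}_{f^N}(K)\subseteq\mathcal{A}_f(K)$. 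Alternatively, and more simply: a point $x$ lies in the basin of $f^N$ iff some compact neighbourhood of it is attracted to $K$ under $f^N$, and one checks using the trapping–region description $\mathcal{A}(K)=\bigcup_{n\leq 0}f^nP$ together with $f P\subseteq\operatorname{int}P$ (which also gives $f^N P\subseteq\operatorname{int}P$) that the union over negative powers of $f$ and over negative powers of $f^N$ coincide, since every $f^{-k}P$ is contained in $f^{-Nk}P$ and vice versa up to finitely many steps.

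The argument is essentially bookkeeping and I do not expect a genuine obstacle; the only point requiring a little care is the equality of the two basins, where one must make sure the maximality clause in the definition is handled correctly rather than merely proving one inclusion. Using the explicit description $\mathcal{A}(K)=\bigcup_{n\leq 0}f^nP$ for a common trapping region sidesteps this cleanly, since a trapping region for $f$ is automatically a trapping region for $f^N$ (as $f^NP\subseteq fP\subseteq\operatorname{int}P$), and the two increasing unions $\bigcup_{n\leq 0}f^nP$ and $\bigcup_{m\leq 0}f^{Nm}P$ are cofinal in one another.
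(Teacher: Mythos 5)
Your invariance step and the inclusion $\mathcal{A}_f(K)\subseteq\mathcal{A}_{f^N}(K)$ are essentially fine (to get all of $\mathcal{A}_f(K)$, and not just the witnessing neighbourhood $U$, you should run the attraction argument for an arbitrary compact $C\subseteq\mathcal{A}_f(K)$, every such $C$ being attracted by definition of the basin; this is a cosmetic repair and is exactly what the paper does). The genuine gap is in the reverse inclusion: both versions of your argument rest on having a compact trapping region for the \emph{original} map, i.e.\ a compact neighbourhood $P$ of $K$ contained in $\mathcal{A}_f(K)$ with $fP\subseteq{\rm int}\,P$ (or a trapping region for $f^N$ sitting inside $\mathcal{A}_f(K)$), together with the formula $\mathcal{A}(K)=\bigcup_{n\leq 0}f^nP$. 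But the proposition concerns an arbitrary attractor in the sense of the first definition of Section \ref{sec:statements}; the paper only argues in the direction ``given a trapping region, one obtains an attractor and its basin'', and it never proves (nor needs) that every attractor of a homeomorphism admits a trapping region for $f$ itself. What attraction of a compact neighbourhood $W$ gives you for free is only $f^nW\subseteq{\rm int}\,W$ for all \emph{large} $n$, i.e.\ a trapping region for a high power of $f$ --- and passing to that power without changing the basin is precisely what Proposition \ref{prop:potencia} is needed for (see Claim 2 in the proof of Theorem \ref{teo:main} and the construction of $P$ in Section \ref{sec:3mfds}), so your route is uncomfortably close to circular in the way the result is actually used. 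Trapping regions for $f$ do exist in this generality, but only via Lyapunov-function--type constructions for discrete dynamics; that is a nontrivial theorem, not bookkeeping, and it is not available anywhere in the paper.

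The reverse inclusion needs no auxiliary neighbourhood at all, which is how the paper handles it: given $p\in\mathcal{A}_{f^N}(K)$, the compact set $\{p\}$ is attracted by $K$ under $f^N$, and $\mathcal{A}_f(K)$ is a neighbourhood of $K$, so $(f^N)^n(p)\in\mathcal{A}_f(K)$ for some $n$; since $\mathcal{A}_f(K)$ is invariant under $f$ (hence under $f^{-1}$), it follows that $p\in\mathcal{A}_f(K)$. If you prefer your cofinal-union picture, it can be salvaged by dropping the trapping condition: for \emph{any} compact neighbourhood $P$ of $K$ with $P\subseteq\mathcal{A}_f(K)$ one has $\mathcal{A}_{f^N}(K)=\bigcup_{m\geq 0}f^{-Nm}({\rm int}\,P)$, and each $f^{-Nm}P\subseteq\mathcal{A}_f(K)$ by invariance --- but at that point you are just rewriting the paper's one-line argument.
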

\begin{proof} The diverse dynamical concepts are attached a subscript $f$ or $f^N$ in order to distinguish which homeomorphism they refer to.

Let $C$ be a compact subset of $\mathcal{A}_f(K)$ and $V$ a neighbourhood of $K$. There exists $n_0$ such that $f^n(C) \subseteq V$ for every $n \geq n_0$, so in particular $(f^N)^n(C) \subseteq V$ for every $n \geq n_0$ too. Thus $K$ is an attractor for $f^N$ whose basin of attraction $\mathcal{A}_{f^N}(K)$ contains $\mathcal{A}_f(K)$. Now, since $\mathcal{A}_f(K)$ is a neighbourhood of $K$, for every point $p \in \mathcal{A}_{f^N(K)}$ there exists $n_0$ such that $(f^N)^n(p) \in \mathcal{A}_f(K)$ for $n \geq n_0$. Using the fact that $\mathcal{A}_f(K)$ is invariant under $f$ we conclude that $p \in \mathcal{A}_f(K)$. Thus $\mathcal{A}_f(K) = \mathcal{A}_{f^N}(K)$.
\end{proof}

\begin{proposition} \label{prop:basica} Let $K$ be an attractor for $f$ with basin of attraction $\mathcal{A}(K)$. Then $K$ and $\mathcal{A}(K)$ have finitely many connected components, $K_1, \ldots, K_r$ and $A_1, \ldots, A_r$ respectively, with $K_i \subseteq A_i$ after relabeling. Moreover, there exists a suitable power $f^N$ of $f$ such that each $K_i$ is an attractor with basin of attraction $A_i$.
\end{proposition}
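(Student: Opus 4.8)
The plan is to read off the component structure from $\mathcal{A}(K)$ first and then transfer it to $K$ by passing to a power of $f$. Since $M$ is a metrizable ANR it is locally connected, hence so is its open subset $\mathcal{A}(K)$, and therefore the connected components of $\mathcal{A}(K)$ are open (and closed) in it. As $K$ is compact it meets only finitely many of them, say $A_1,\ldots,A_r$, and $W:=A_1\cup\ldots\cup A_r$ is an open neighbourhood of $K$. The first real step is to check that $A_1,\ldots,A_r$ are \emph{all} the components of $\mathcal{A}(K)$. Since $f$ is a homeomorphism of $\mathcal{A}(K)$ with $f(K)=K$, each $f(A_i)$ is a component of $\mathcal{A}(K)$ and it meets $K$ (because $f(A_i)\cap K=f(A_i\cap K)\neq\emptyset$), so $f$, and likewise $f^{-1}$, permutes $\{A_1,\ldots,A_r\}$. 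If $B$ is any component of $\mathcal{A}(K)$, a point $p\in B$ is attracted by $K$, so $f^n(p)\in W$ for some $n\geq 0$; this forces $f^n(B)=A_i$ for some $i$, whence $B=f^{-n}(A_i)\in\{A_1,\ldots,A_r\}$. Thus $\mathcal{A}(K)=A_1\sqcup\ldots\sqcup A_r$.

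Next I would ``freeze'' the components. Let $N$ be the order of the permutation that $f$ induces on $\{A_1,\ldots,A_r\}$, so that $f^N(A_i)=A_i$ for every $i$; by Proposition~\ref{prop:potencia}, $K$ remains an attractor for $f^N$ with basin $\mathcal{A}(K)$. Put $K_i:=K\cap A_i$, a clopen subset of $K$. Since the $A_i$ are clopen in $\mathcal{A}(K)$, any trapping region $P\subseteq\mathcal{A}(K)$ for $f^N$ decomposes as $P=P_1\sqcup\ldots\sqcup P_r$ with $P_i:=P\cap A_i$ compact; a routine verification, using that each $A_i$ is open in $M$ and $f^N$-invariant, shows that $P_i$ is a trapping region for $f^N|_{A_i}$ with $\bigcap_{n\geq 0}(f^N)^nP_i=K_i$ and $\bigcup_{n\leq 0}(f^N)^nP_i=A_i$. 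Hence each $K_i$ is an attractor for $f^N|_{A_i}$ with basin of attraction $A_i$, and $K=K_1\sqcup\ldots\sqcup K_r$ with $K_i\subseteq A_i$.

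The remaining point, which I expect to be the main obstacle, is that each $K_i$ is in fact \emph{connected}; this then forces $K$ to have exactly $r$ components. One cannot invoke soft general facts here, since attractors of homeomorphisms need not be locally connected (the solenoid of Example~\ref{ejem:solenoid} is not) and limit sets of orbits of homeomorphisms need not be connected. I would argue by contradiction: if $K_i=C\sqcup D$ with $C,D$ nonempty, then $C$ and $D$ are compact and clopen in $K_i$, and, passing to a further power of $f$ if necessary (which does not affect anything established so far), we may assume $f^N(C)=C$ and $f^N(D)=D$. Choosing the trapping region $P_i$ small enough that it is contained in the disjoint union of a neighbourhood of $C$ and a neighbourhood of $D$, it splits as $P_i=P_C\sqcup P_D$, where $P_C$ and $P_D$ are trapping regions for $C$ and $D$; therefore $A_i=\bigcup_{n\leq 0}(f^N)^nP_i=\mathcal{A}(C)\cup\mathcal{A}(D)$ is a union of two nonempty open sets. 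These are disjoint, because a point attracted both to $C$ and to $D$ would --- by compactness of the trapping regions --- have a limit point of its forward orbit in $C\cap D=\emptyset$. This contradicts the connectedness of $A_i$, so each $K_i$ is connected and the proof is complete.
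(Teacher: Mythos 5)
Your first two steps are sound (and take a different route from the paper, reading off the finitely many components of $\mathcal{A}(K)$ directly from local connectedness and the fact that $f$ permutes the finitely many components that meet $K$). The genuine gap is exactly where you anticipated trouble: the connectedness of $K_i=K\cap A_i$. In your contradiction argument you write that, for a given clopen decomposition $K_i=C\sqcup D$, ``passing to a further power of $f$ if necessary we may assume $f^N(C)=C$ and $f^N(D)=D$''. Nothing justifies this. The analogous reduction worked for the $A_i$ because they form a \emph{finite} family permuted by $f$, so some power fixes each of them; but $C$ and $D$ are arbitrary clopen pieces of a compactum that may have uncountably many components and uncountably many clopen subsets. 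The homeomorphism $f^N|_{K_i}$ carries $C$ to some clopen subset of $K_i$ which need not be $C$ or $D$ (it can meet both), and no power of $f$ need ever map $C$ onto itself --- think of a homeomorphism of a Cantor set conjugate to a shift, with $C$ a cylinder set: every iterate of $C$ differs from both $C$ and $D$. Without this invariance you cannot split a trapping region into $P_C\sqcup P_D$ with $f^N P_C\subseteq P_C$, nor speak of the basins $\mathcal{A}(C)$ and $\mathcal{A}(D)$, since $C$ and $D$ are then not invariant sets and not attractors; the contradiction with the connectedness of $A_i$ evaporates. Note also that you cannot appeal to the ``fact'' that such a disconnected $K_i$ does not occur, since that is precisely what is being proved.

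The paper sidesteps the contradiction argument by making connectedness constructive. Using local connectedness of the ANR it builds a compact neighbourhood $P\subseteq\mathcal{A}(K)$ of $K$ which has only \emph{finitely many} connected components $P_1,\ldots,P_r$, each meeting $K$; since $f^nP\subseteq P$ is again a neighbourhood of $K$, $f^n$ permutes these finitely many components, and a suitable power $f^N$ sends each $P_i$ into itself (here the finiteness that your argument lacked is built in from the start). Then $K_i:=\bigcap_{m\geq 1}f^{Nm}P_i$ is a decreasing intersection of compact \emph{connected} sets (homeomorphic images of the connected $P_i$), hence connected, these $K_i$ are exactly the components of $K$, and $A_i=\bigcup_{m\geq 0}f^{-Nm}P_i$ are the components of $\mathcal{A}(K)$. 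If you wish to keep your outline (components of $\mathcal{A}(K)$ first, then intersect with $K$), the fix is to run this same construction inside each $A_i$: replace the arbitrary trapping piece $P_i=P\cap A_i$ by a compact neighbourhood of $K_i$ with finitely many connected components each meeting $K_i$, pass to a further power so that each component is positively invariant, and obtain connectedness of $K_i$ from the nested-continua lemma rather than from a clopen splitting. (The existence of a compact positively invariant neighbourhood of $K$ inside $\mathcal{A}(K)$, which you used implicitly, is not the problem; it follows as in the paper after raising $f$ to a power.)
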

\begin{proof}[Proof of Proposition \ref{prop:basica}] Since an ANR is locally connected \cite[Theorem 7, p. 40]{mardesic1} each $p \in K$ has an open connected neighbourhood $U_p$ such that $\overline{U}_p$ is compact and contained in $\mathcal{A}(K)$. The compactness of $K$ implies that there exists a finite set $F \subseteq K$ such that $K \subseteq U := \bigcup_{p \in F} U_p$, so $U$ is a neighbourhood of $K$ whose closure $P := \overline{U} = \bigcup_{p \in F} \overline{U}_p$ is a compact subset of $\mathcal{A}(K)$. Each $\overline{U}_p$ is connected, being the closure of a connected set, so $P$ has at most $|F|$ connected components; let these be $P_1, \ldots, P_r$. Clearly every component of $P$ has nonempty intersection with $K$.

$P$ is a compact neighbourhood of the attractor $K$, so for some $n \geq 1$ we have $f^nP \subseteq P$, and consequently $f^nP_i \subseteq P_{j(i)}$ for suitable indices $j(i)$. Observe that, since $f^nP$ is a neighbourhood of $K$ as well and every component of $P$ meets $K$, every component of $P$ must meet $f^nP$; this implies that the $j(i)$ exhaust all of $\{1, \ldots, r\}$. Hence $i \mapsto j(i)$ is a permutation of $r$ elements and therefore ${(f^n)}^{r!} = f^{n \cdot r!}$ must send each $P_i$ into $P_i$. Let $N := n \cdot r!$ and replace $f$ by $f^N$ to keep notation simple, which entitles us to assume that $fP_i \subseteq P_i$ for all $1 \leq i \leq r$. Proposition \ref{prop:potencia} guarantees that $\mathcal{A}(K)$ does not change under this modification.

For every $1 \leq i \leq r$ let $K_i := \bigcap_{n \geq 1} f^n P_i$. The condition that $f P_i \subseteq P_i$ implies that the sequence $f^n P_i$ is decreasing, and since each of its members is compact and connected, we conclude that $K_i$ is compact and connected as well. Observe also that the $K_i$ are disjoint (because the $P_i$ are) and their union is precisely $\bigcap_{n \geq 1} f^n P = K$, so the $K_i$ are precisely the connected components of $K$.

Given $p \in P_i$, we have $f^n(p) \longrightarrow K$ because $P_i \subseteq \mathcal{A}(K)$. However, since $P_i$ is positively invariant we have $(f^n(p)) \subseteq P_i$, and since $K \bigcap P_i = K_i$ we conclude that $f^n(p) \longrightarrow K_i$. Thus $K_i$ is an attractor whose basin of attraction includes $P_i$, so in particular $\mathcal{A}(K_i) = \bigcup_{n \geq 0} f^{-n}P_i$.

The sets $\mathcal{A}(K_i)$ are connected, being the union of an increasing sequence of connected sets. They are disjoint, because $f^{-m}P_i \bigcap f^{-n}P_j = \emptyset$ if $i \neq j$ since (for example when $m \geq n$) we have \[f^m \left( f^{-m}P_i \bigcap f^{-n}P_j \right) = P_i \bigcap f^{m-n}P_j \subseteq P_i \bigcap P_j = \emptyset\] due to the positive invariance of $P_j$. Finally, $\mathcal{A}(K) = \bigcup_{n \geq 0} f^{-n} P$ is clearly the union of the $\mathcal{A}(K_i)$, and so these are precisely the components of $\mathcal{A}(K)$.
\end{proof}

The basin of attraction of an attractor is always an invariant open subset of the phase space $M$. Therefore it is locally compact and metrizable, because $M$ has these properties. Similarly, since an open subset of an ANR is again an ANR, it follows that $\mathcal{A}(K)$ is an ANR. Thus in all our results we may replace $M$ with $\mathcal{A}(K)$ and $f$ with its restriction $f|_{\mathcal{A}(K)}$; that is, we may simply assume that $M = \mathcal{A}(K)$. Similarly, Proposition \ref{prop:basica} entitles us to assume that $K$ and therefore also $\mathcal{A}(K)$ are connected. Summing up:

\begin{remark} \label{rem:simple} In this section we shall always assume, without loss of generality, that $K$ is a global attractor (that is, its basin of attraction $\mathcal{A}(K)$ is all of $M$) and both $K$ and $M$ are connected. Similar assumptions will be made without further explanation, when convenient, throughout the paper.
\end{remark}

Now we can start with the proof of Theorem \ref{teo:main} proper.

\begin{proof}[Proof of Theorem \ref{teo:main}] ($\Rightarrow$) Assume first that $i : K \subseteq \mathcal{A}(K) = M$ is a shape equivalence. We want to prove that $K$ has the shape of a polyhedron. This is very easy. Since $M$ is an ANR, it has the homotopy type of a polyhedron $Q$ (this is a general property of ANRs) and in particular it has the shape of $Q$ because every homotopy equivalence is a shape equivalence. Since by assumption $i$ is a shape equivalence, it follows that $K$ also has the shape of the polyhedron $Q$.

($\Leftarrow$) Let us assume now that $K$ has the shape of a polyhedron $Q$. We want to prove that the inclusion $i$ is a shape equivalence. We remind the reader that, according to Remark \ref{rem:simple} we assume that $K$ and $M$ are connected and $K$ is a global attractor in $M$.

Fix any basepoint $* \in K$ and denote $i : (K,*) \subseteq (M,*)$ the inclusion. For each dimension $d \geq 1$ there is an induced homomorphism $i_* : \check{\pi}_d(K,*) \longrightarrow \check{\pi}_d(M,*)$. Since $M$ is an ANR we may (and will) identify $\check{\pi}_d(M,*)$ with the ordinary homotopy group $\pi_d(M,*)$.
\medskip

{\it Claim 1.} It is enough to prove that $i_* : \check{\pi}_d(K,*) \longrightarrow \pi_d(M,*)$ is an isomorphism for each $d \geq 1$.

{\it Proof of claim.} Since $K$ has the shape of a polyhedron, $(K,*)$ has the pointed shape of a pointed polyhedron \cite[Theorem 1, p. 219]{mardesic1}. This means that there exist a pointed polyhedron $(Q,*)$ and a shape equivalence $q : (Q,*) \longrightarrow (K,*)$. Notice that $q_* : \pi_d(Q,*) \longrightarrow \check{\pi}_d(K,*)$ is an isomorphism for each $d \geq 1$, where once again we have identified $\check{\pi}_d(Q,*)$ with $\pi_d(Q,*)$.

Now $iq : (Q,*) \longrightarrow (M,*)$ induces isomorphisms $(iq)_* = i_* q_* : \pi_d(Q,*) \longrightarrow \pi_d(M,*)$. Thus by the classical Whitehead theorem \cite[Corollary 24, p. 405]{spanier1} we see that $iq$ is a homotopy equivalence, and consequently a shape equivalence. Since $q$ is a shape equivalence, it follows that $i$ is a shape equivalence too. $_{\blacksquare}$
\medskip

{\it Claim 2.} $K$ has arbitrarily small neighbourhoods $U$ with the following three properties: ({\it i}\/) $U$ is a connected ANR, ({\it ii}\/) $\overline{U}$ is compact and ({\it iii}\/) possibly after replacing $f$ by a suitable power of itself, $fU \subseteq U$.

{\it Proof of claim.} Indeed, let $V$ be a neighbourhood of $K$. Since $M$ is locally compact, we may reduce $V$ so that $\overline{V}$ is compact. Let $U \subseteq V$ be an open neighbourhood of $K$, and discard all of its components except for the one that contains the connected set $K$. Then $U$ is an ANR, because it is open in the ANR $M$, and it satisfies ({\it i}\/) and ({\it ii}\/). Now, since $\overline{U}$ is compact, there exists $N$ such that $f^N \overline{U} \subseteq U$. Replacing $f$ by $f^N$ we achieve all three conditions. $_{\blacksquare}$
\medskip

Let $U$ be a neighbourhood of $K$ as in Claim 2. Condition ({\it iii}\/) implies that $f^{n+1}U \subseteq f^nU$ for every $n \in \mathbb{Z}$. Denote by $j_{n+1} : \pi_d(f^{n+1}U,*) \longrightarrow \pi_d(f^nU,*)$ the inclusion induced homomorphisms and consider the biinfinite sequence $\mathcal{U}$ \[  \xymatrix{\ldots & \pi_d(f^{-1}U,*) \ar[l]_-{j_{-1}} & \pi_d(U,*) \ar[l]_-{j_0} & \pi_d(fU,*) \ar[l]_-{j_1} & \pi_d(f^2U,*) \ar[l]_-{j_2} & \ldots \ar[l]_-{j_3}}\]

Condition ({\it ii}\/) on $U$ implies that $\bigcap_{n \geq 1} f^n U = K$. Since $U$, and consequently each $f^nU$, is an ANR, it follows that the inverse limit of the above sequence is precisely $\check{\pi}_d(K,*)$. On the other hand, the interiors of the $f^nU$ form an increasing (for decreasing $n$) sequence whose union is all of $M$; therefore $\pi_d(M,*)$ is the direct limit of the above sequence.
\medskip

{\it Claim 3.} Let $W$ be a connected open neighbourhood of $K$ in $M$. Assume that for some basepoint $\star \in K$ the inclusion $(K,\star) \subseteq (W,\star)$ induces an injective homomorphism $\check{\pi}_d(K,\star) \longrightarrow \pi_d(W,\star)$. Then the same holds true when the basepoint $\star$ is replaced by $*$.

{\it Proof of claim.} Let $W_n$ be a decreasing basis of connected open neighbourhoods of $K$ in $M$, all contained in $W$. There exists a sequence of paths $\gamma_n$ such that ({\it i}\/) $\gamma_n \subseteq W_n$, ({\it ii}\/) $\gamma_n$ joins $\star$ and $*$, ({\it iii}\/) $\gamma_{n+1} \simeq \gamma_n$ in $W_n$ relative to their endpoints. This is true because $K$ is \emph{joinable} \cite[p. 144]{krasinkiewiczminc1}, which in turn is a consequence of $K$ having the pointed shape of a pointed polyhedron \cite[Proposition 1.8, p. 145]{krasinkiewiczminc1}. Denote $h_{\gamma_n} : \pi_d(W_n,*) \longrightarrow \pi_d(W_n,\star)$ the standard basepoint change isomorphism \cite[Theorem 8, p. 384]{spanier1}.

Let $\alpha \in \check{\pi}_d(K,*)$, and assume that $\alpha = 1$ in $\pi_d(W,*)$. Since $\check{\pi}_d(K,*)$ is the inverse limit of \[\pi_d(W_1,*) \longleftarrow \pi_d(W_2,*) \longleftarrow \pi_d(W_3,*) \longleftarrow \ldots\] where each arrow is induced by inclusion, $\alpha$ is represented by a sequence $(\alpha_n)$ where each $\alpha_n \in \pi_d(W_n,*)$ and $\alpha_{n+1} \simeq \alpha_n$ (rel. $*$) in $W_n$. The condition that $\alpha= 1$ in $\pi_d(W,*)$ implies that $\alpha_n \simeq {\rm ct}_*$ (rel. $*$) in $W$ for every $n \in \mathbb{N}$. Here ${\rm ct}_*$ means the map constantly $*$. We warn the reader that we are making no notational distinction between a map and the homotopy class it represents.

By ({\it iii}\/) we see that $h_{\gamma_{n+1}} \alpha_{n+1} \simeq h_{\gamma_n}\alpha_n$ (rel. $\star$) in $W_n$, and so $\alpha' := (h_{\gamma_n} \alpha_n)$ is a legitimate element of $\check{\pi}_d(K,\star)$. Since each $\alpha_n \simeq {\rm ct}_*$ (rel. $*$) in $W$, clearly $h_{\gamma_n} \alpha_n \simeq {\rm ct}_{\star}$ (rel. $\star$) in $W$ for each $n$, and so $\alpha' = 1$ in $\pi_d(W,\star)$. The assumption that the inclusion $(K,\star) \subseteq (W,\star)$ induces an injective homomorphism now implies that $\alpha' = 1$ in $\check{\pi}_d(K,\star)$, so $h_{\gamma_n} \alpha_n \simeq {\rm ct}_{\star}$ (rel. $\star$) in $W_n$ for each $n$. Therefore $\alpha_n \simeq {\rm ct}_*$ (rel. $*$) in $W_n$ for each $n$ and it follows that $\alpha = 1$ in $\check{\pi}_d(K,*)$. $_{\blacksquare}$
\medskip

Let us prove now that $i_*$ is injective. Since $(K,*)$ has the (pointed) shape of a polyhedron, it is a pointed fundamental absolute neighbourhood retract \cite[Theorem 15, p. 234]{mardesic1}. This means that $K$ has a neighbourhood $V$ in $M$ such that the inclusion $(K,*) \subseteq (V,*)$ has a left inverse $r$ in the shape category. Let $U \subseteq V$ be as in Claim 2 and denote $i_0 : (K,*) \subseteq (U,*)$ the inclusion. More generally, for each $n \in \mathbb{Z}$ let $i_n : (K,*) \subseteq (f^n U,*)$ be the inclusion.

According to sequence $\mathcal{U}$, to prove that $i_* : \check{\pi}_d(K,*) \longrightarrow \pi_d(M,*)$ is injective it is enough to show that each $i_n$ induces an injective homomorphism $i_{n,*} : \check{\pi}_d(K,*) \longrightarrow \pi_d(f^nU,*)$. The case $n = 0$ is immediate: from the fact that the composition \[(K,*) \stackrel{i_0}{\subseteq} (U,*) \subseteq (V,*) \stackrel{r}{\longrightarrow} (K,*)\] equals the identity in the shape category we see that $i_{0,*} : \check{\pi}_d(K,*) \longrightarrow \pi_d(U,*)$ is injective. To settle the case $n \neq 0$ we need Claim 3. Let $n \in \mathbb{Z}$. The composition \[(K,f^n*) \stackrel{f^{-n}}{\longrightarrow} (K,*) \stackrel{i_0}{\subseteq} (U,*) \stackrel{f^n}{\longrightarrow} (f^nU,f^n*)\] is precisely the inclusion $(K,f^n*) \subseteq (f^nU,f^n*)$. Therefore it induces an injective homomorphism $\check{\pi}_d(K,f^n*) \longrightarrow \pi_d(f^nU,f^n*)$ because it equals the composition $f^{-n}_* i_{0,*} f^n_*$ and we saw above that $i_{0,*}$ is injective. By Claim 3 with $W = f^nU$ and $\star = f^n*$ we conclude that $i_{n,*}$ is injective.

By Claim 1 it only remains to show that $i_*$ is surjective. Let $U$ be a neighbourhood of $K$ as in Claim 2 and consider once again the sequence $\mathcal{U}$. Denote $G_n := {\rm im}\ j_{n+1}$. It is easy to see that the direct and inverse limits of $\mathcal{U}$ do not change if we replace each of its terms by the image of the preceding one; that is, the sequence \[\xymatrix@=15mm{ \ldots & G_{-1} \ar[l]_-{j_{-1}|_{G_{-1}}} & G_0 \ar[l]_-{j_0|_{G_0}} & G_1 \ar[l]_-{j_1|_{G_1}} & G_2 \ar[l]_-{j_2|_{G_2}} & \ldots \ar[l]_-{j_3|_{G_3}}}\] has $\pi_d(M,*)$ and $\check{\pi}_d(K,*)$ as its direct and inverse limits, respectively. To prove that $i_*$ is surjective it will be enough to show that each bonding morphism in this sequence is surjective. Explicitly, we need to show that \[(S_n) : j_{n+1}(G_{n+1}) = G_n\] is true for every $n \in \mathbb{Z}$.

We start with the case $n = 0$. Since $(K,*)$ has the pointed shape of a pointed polyhedron, ${\rm pro-}\pi_d(K,*)$ has the Mittag--Leffler property. Thus there exists some $N \geq 1$ such that ${\rm im}\ (j_1 \circ \ldots \circ j_n) = {\rm im}\ (j_1 \circ \ldots \circ j_N)$ for every $n \geq N$, and replacing $f$ by $f^N$ we are entitled to assume that $N = 1$. Observe that then \[G_0 = {\rm im}\ j_1 = {\rm im}\ (j_1 \circ j_2) = j_1({\rm im}\ j_2) = j_1(G_1),\] which proves ($S_n$) for $n = 0$.

Now let $n \in \mathbb{Z}$ and consider the commutative diagram \[\xymatrix{(f^n U,*) \ar[d]_f & (f^{n+1} U,*) \ar[d]_f \ar[l] & (f^{n+2} U,*) \ar[d]_f \ar[l] \\ (f^{n+1} U,f*) & (f^{n+2} U,f*) \ar[l] & (f^{n+3} U,f*) \ar[l]}\] where the unlabeled arrows are inclusions. Since $U$ is a connected ANR, it is path connected and so is $f^{n+3} U$, being homeomorphic to $U$. Thus there is a path $\gamma_n$ in $f^{n+3} U$ that joins $*$ and $f*$, because both points belong to $K \subseteq f^{n+3} U$. Denote $h_{{\gamma_n}}$ the standard basepoint change isomorphism from $f*$ to $*$, as in Claim 3. Then there is a commutative diagram \[\xymatrix@C=20mm{\pi_d(f^n U,*) \ar[d]_{h_{{\gamma_n}} f_*}^{\cong} & \pi_d(f^{n+1} U,*) \ar[l]_{j_{n+1}} \ar[d]_{h_{{\gamma_n}} f_*}^{\cong} & \pi_d(f^{n+2} U,*) \ar[l]_{j_{n+2}} \ar[d]_{h_{{\gamma_n}} f_*}^{\cong} \\ \pi_d(f^{n+1} U,*) & \pi_d(f^{n+2} U,*) \ar[l]_{j_{n+2}} & \pi_d(f^{n+3} U,*) \ar[l]_{j_{n+3}}}\]

Since each $h_{{\gamma_n}} f_*$ is an isomorphism because $f$ is a homeomorphism onto its image, it follows from the commutativity of the left square that $h_{{\gamma_n}} f_*$ takes $G_n$ onto $G_{n+1}$, and similarly (from the right square) that it also takes $G_{n+1}$ onto $G_{n+2}$. Concentrating on the left square of the above diagram alone and replacing each group by the image of the arrow that enters it, we obtain \[\xymatrix@C=20mm{G_n \ar[d]_{h_{{\gamma_n}} f_*|_{G_n}}^{\cong} & G_{n+1} \ar[l]_{j_{n+1}|_{G_{n+1}}} \ar[d]^{h_{{\gamma_n}} f_*|_{G_{n+1}}}_{\cong} \\ G_{n+1} & G_{n+2} \ar[l]_{j_{n+2}|_{G_{n+2}}}}\] and it is then obvious that $j_{n+1}|_{G_{n+1}}$ is surjective $\Leftrightarrow$ $j_{n+2}|_{G_{n+2}}$ is surjective. That is, $j_{n+1}(G_{n+1}) = G_n \Leftrightarrow j_{n+2}(G_{n+2}) = G_{n+1}$. Since we showed earlier that $j_1(G_1) = G_0$, induction proves ($S_n$) for every $n \in \mathbb{Z}$ (use $\Rightarrow$ for $n \geq 1$ and $\Leftarrow$ for $n \leq -1$). This concludes the proof of Theorem \ref{teo:main}.
\end{proof}

To finish this section we prove Corollary \ref{cor:fixed}. First we need a purely shape theoretical remark:

\begin{remark} \label{rem:dominated} Suppose a compact set $K$ is shape dominated by a polyhedron $P$. This means that there exist two shape morphisms $j : K \longrightarrow P$ and $k : P \longrightarrow K$ such that $kj = {\rm id}_K$ in the shape category. Then $K$ actually has the shape of a polyhedron (different from $P$, in general). As a consequence of this and Theorem \ref{teo:main}, if an attractor $K$ is shape dominated by a polyhedron then the inclusion $i : K \subseteq \mathcal{A}(K)$ is a shape equivalence.
\end{remark}
\begin{proof} Since $K$ is shape dominated by a polyhedron, it is an absolute shape neighbourhood retract \cite[Theorem 12, p. 233]{mardesic1}. Since $K$ is compact, this implies that it is a fundamental absolute neighbourhood retract \cite[Corollary 2, p. 234]{mardesic1} and consequently a pointed fundamental absolute neighbourhood retract \cite[Theorem 19, p. 236]{mardesic1}. It follows that $K$ has the shape of a polyhedron \cite[Theorem 15, p. 234]{mardesic1}.
\end{proof}

\begin{proof}[Proof of Corollary \ref{cor:fixed}] The assumption is that $f|_K$ is homotopic to the identity ${\rm id}_K$. It suffices to prove that there exist an open neighbourhood $V$ of $K$ and a shape morphism $r : V \longrightarrow K$ such that $rj = {\rm id}_K$, where $j : K \subseteq V$ is the inclusion. Indeed, admit for a second that we have already proved this. Since $V$ is open in the ANR $M$, it is an ANR itself and so it has the homotopy type of a polyhedron $P$. Let $u : V \longrightarrow P$ be a homotopy equivalence and $v : P \longrightarrow V$ its (homotopy) inverse, so that $vu = {\rm id}_V$. Inserting this in $rj = {\rm id}_K$ we get $(rv)(uj) = {\rm id}_K$, so we see that the shape morphism $uj : K \longrightarrow P$ has a left inverse. Thus $K$ is shape dominated by the polyhedron $P$ and due to Remark \ref{rem:dominated} we may apply Theorem \ref{teo:main} to conclude that the inclusion $i : K \subseteq \mathcal{A}(K)$ is a shape equivalence.

Let us construct, then, $V$ and $r$. Let $H : K \times [0,1] \longrightarrow K$ be a homotopy such that $H_0 = {\rm id}_K$ and $H_1 = f|_K$, and let $U$ be an open neighbourhood of $K$ as in Claim 2 of the proof of Theorem \ref{teo:main}. It is easy to see that there exist an open neighbourhood $V$ of $K$ and an extension $\tilde{H} : V \times [0,1] \longrightarrow U$ of $H$ such that $\tilde{H}_0 = {\rm id}|_V$ and $\tilde{H}_1 = f|_V$ \cite[Theorem 8, p. 40]{mardesic1}. Notice that the first condition implies that $V \subseteq U$. Consider the ANR expansion of $K$ given by \[\xymatrix{U & fU \ar[l] & f^2U \ar[l] & \ldots \ar[l]}\] and the sequence of maps $r_k := f^k|_V : V \longrightarrow f^k U$. For each $k = 0,1,2,\ldots$ the map $f^k \tilde{H} : V \times [0,1] \longrightarrow f^kU$ provides a homotopy between $f^k|_V$ and $f^{k+1}|_V$, so the diagram \[\xymatrix@=20mm{U & fU \ar[l] & f^2U \ar[l] & \ldots \ar[l] \\ V \ar[u]^{{\rm id}|_V} \ar[ur]^{f|_V} \ar[urr]^{f^2|_V} & & } \] is commutative up to homotopy and consequently $(r_k)$ defines a shape morphism $r = (r_k) : V \longrightarrow K$. Since $r_k j = f^k|_K \simeq {\rm id}_K$, it follows that $r j = {\rm id}_K$ in the shape category. This concludes the proof.
\end{proof}

\section{Abstracting the algebra behind the proof of Theorem \ref{teo:main}} \label{sec:algebra}

Looking back at the proof of Theorem \ref{teo:main} it is apparent that the surjectivity of $i_*$ was established using a mostly algebraic argument involving the Mittag--Leffler property and the very special structure of sequence $\mathcal{U}$. It turns out that, under suitable conditions, this argument can be refined to yield also the injectivity of $i_*$. Thus it is worthwhile to abstract the algebra behind the argument, and that is what this section is devoted to. The results obtained here will be used in the following two sections to establish Theorems \ref{teo:2mfds} and \ref{teo:3var}.
\medskip

Consider the sequence \[ \mathcal{G} : \xymatrix{\ldots & G_{-1} \ar[l]_-{\varphi_{-1}} & G_0 \ar[l]_-{\varphi_0} & G_1 \ar[l]_-{\varphi_1} & \ldots \ar[l]_-{\varphi_2}} \] where each $G_n$ is a (possibly non abelian) group and each $\varphi_n : G_n \longrightarrow G_{n-1}$ is a group homomorphism. $\mathcal{G}$ has an inverse limit $\underline{G}$ endowed with natural maps from $\underline{G}$ to each $G_n$; we single out one, namely $\underline{\varphi} : \underline{G} \longrightarrow G_0$. $\mathcal{G}$ also has a direct limit $\overline{G}$ endowed with natural maps from each $G_n$ to $\overline{G}$; we single out one, namely $\overline{\varphi} : G_0 \longrightarrow \overline{G}$. Consider the decreasing sequence of subgroups of $G_0$ defined by $F_0 := G_0$ and $F_n := {\rm im}\ (\varphi_1 \ldots \varphi_n)$ for $n \geq 1$. Clearly \[(S) : G_0 = F_0 \geq F_1 \geq F_2 \geq \ldots\] and we shall say that $(S)$ \emph{stabilizes} if there exists $n \geq 0$ such that $F_n = F_{n+1} = F_{n+2} = \ldots$ On occasion we will need to be more precise and say that $(S)$ stabilizes \emph{at} $F_n$.

In our applications the groups $G_n$ will be homology or homotopy groups of iterates $f^nP$. The following definition is an algebraic abstraction of the idea that each $f^{n+1}P$ lies in $f^nP$ in the same way that $fP$ lies in $P$.

\begin{definition} Let us say that $\mathcal{G}$ is \emph{rigid} if the following condition holds: for every $m,n \in \mathbb{Z}$, $m \geq 1$, there is a commutative diagram \[\xymatrix@=15mm{G_0 \ar[d]_{\cong} & G_1 \ar[l]_{\varphi_1} \ar[d]_{\cong} & \ldots \ar[l]_{\varphi_2} \ar@{}[d]|{\vdots} & G_m \ar[l]_{\varphi_m} \ar[d]_{\cong} \\ G_n & G_{n+1} \ar[l]^{\varphi_{n+1}} & \ldots \ar[l]^{\varphi_{n+2}} & G_{n+m} \ar[l]^{\varphi_{n+m}}}\] where the vertical arrows are isomorphisms.
\end{definition}

\begin{proposition} \label{prop:one} Let $\mathcal{G}$ be rigid. The following properties hold.
\begin{enumerate}
	\item All the $G_n$ are isomorphic to each other.
	\item If $F_n = F_{n+1}$ for some $n$, then $F_n = F_{n+1} = F_{n+2} = \ldots$ so that $(S)$ stabilizes.
	\item $(S)$ stabilizes if, and only if, \[ \xymatrix{ G_0 & G_1 \ar[l]_-{\varphi_1} & \ldots \ar[l]_-{\varphi_2}}\] is Mittag--Leffler.
\end{enumerate}
\end{proposition}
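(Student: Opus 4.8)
The plan is to prove the three assertions in turn, the common engine being that rigidity, applied to each finite truncation of $\mathcal G$, lets us transport the filtration $(S)$ from $G_0$ to every $G_n$ and --- what is crucial for (2) --- realise it, over any prescribed finite range of indices, as the image filtration of a single endomorphism. The preliminary observation I would record is the following. Fix $k\in\mathbb Z$ and $m\ge 1$; rigidity supplies isomorphisms $\theta_0,\dots,\theta_m$ turning the length-$m$ truncation $G_0\leftarrow\dots\leftarrow G_m$ into the truncation $G_k\leftarrow\dots\leftarrow G_{k+m}$. Composing the commuting squares from the $0$-th column to the $j$-th column gives $\theta_0\circ(\varphi_1\cdots\varphi_j)=(\varphi_{k+1}\cdots\varphi_{k+j})\circ\theta_j$ for $0\le j\le m$, and since $\theta_j$ is surjective this yields \[\theta_0(F_j)=F_j^{(k)}\qquad(0\le j\le m),\] where I abbreviate $F_j^{(k)}:={\rm im}\,\bigl(\varphi_{k+1}\cdots\varphi_{k+j}\colon G_{k+j}\to G_k\bigr)$ for $j\ge 1$ and $F_0^{(k)}:=G_k$, so that $F_j^{(0)}=F_j$. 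Part (1) falls out immediately: with $m=1$ and $k$ arbitrary, $\theta_0\colon G_0\to G_k$ is an isomorphism, so all the $G_n$ are isomorphic to one another.

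For part (2) I would work with the shift $k=1$. Fix $m\ge 1$ and write $\theta\colon G_0\to G_1$ for the isomorphism $\theta_0$ of the length-$m$ diagram, so that $\theta(F_j)=F_j^{(1)}$ for $0\le j\le m$. A one-line verification shows $\varphi_1\bigl(F_j^{(1)}\bigr)=F_{j+1}$ for every $j\ge 0$; hence, putting $\sigma:=\varphi_1\circ\theta\colon G_0\to G_0$, we get $\sigma(F_j)=F_{j+1}$ for $0\le j\le m$ and therefore $F_j={\rm im}\,\sigma^j$ for $0\le j\le m+1$. At this point the claim reduces to the classical fact that, for an endomorphism $\sigma$ of a group, ${\rm im}\,\sigma^n={\rm im}\,\sigma^{n+1}$ forces ${\rm im}\,\sigma^n={\rm im}\,\sigma^j$ for all $j\ge n$ (because ${\rm im}\,\sigma^{j+1}=\sigma({\rm im}\,\sigma^j)$). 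Applying this with the $n$ of the hypothesis and any $m\ge n$ gives $F_n=F_{n+1}=\dots=F_{m+1}$; letting $m\to\infty$ yields $F_n=F_{n+1}=F_{n+2}=\dots$, which is (2).

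For part (3) the key remark is that ``$(S)$ stabilizes'' is exactly the Mittag--Leffler condition of $G_0\leftarrow G_1\leftarrow\dots$ \emph{at level $0$}: since ${\rm im}\,(G_m\to G_0)=F_m$, the level-$0$ images stabilize precisely when $(S)$ does. This gives one implication at once. For the converse, suppose $(S)$ stabilizes, say $F_j=F_N$ for all $j\ge N$, and fix a level $n\ge 0$; Mittag--Leffler at level $n$ requires that ${\rm im}\,(G_\ell\to G_n)=F^{(n)}_{\ell-n}$ be eventually constant in $\ell$. Given any $j\ge N$, choose a length-$m$ diagram with shift $k=n$ and $m\ge j$; the preliminary identity together with $F_j=F_N$ gives $F_j^{(n)}=\theta_0(F_j)=\theta_0(F_N)=F_N^{(n)}$. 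Thus $F_j^{(n)}=F_N^{(n)}$ for all $j\ge N$, so the sequence is Mittag--Leffler (with the bound $N$ uniform in $n$, in fact).

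The deferred computations --- the square-chasing identity of the first paragraph and $\varphi_1(F_j^{(1)})=F_{j+1}$ --- are entirely routine, as is the bookkeeping about which indices a given truncation controls. The one genuinely delicate point, and the step I expect to be the main obstacle, is (2): naive group theory is not enough here, since stabilization of the image of a single \emph{composite} $\varphi_1\cdots\varphi_n$ does not in general propagate (and indeed fails for non-rigid $\mathcal G$), so the whole argument hinges on using rigidity to present the truncated filtration as $\{{\rm im}\,\sigma^j\}$ for one endomorphism $\sigma$; one must also bear in mind that this $\sigma=\sigma_m$ depends on the truncation length, so the conclusion is only reached after passing to the limit $m\to\infty$.
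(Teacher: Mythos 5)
Your proof is correct and follows essentially the same route as the paper: the transport identity $\theta_0(F_j)=F_j^{(k)}$ extracted from the rigidity squares is exactly the paper's computation $\psi_0(F_n)=(\varphi_2\cdots\varphi_{n+1})(G_{n+1})$, and applying $\varphi_1$ then propagates $F_n=F_{n+1}$ one step forward. Your packaging of the iteration through the endomorphisms $\sigma_m=\varphi_1\circ\theta$ (with due care about their dependence on the truncation length) and your written-out argument for part (3) simply supply the details the paper dispatches with ``similar arguments'' and ``left to the reader''.
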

\begin{proof} (1) This is trivial.

(2) Since $\mathcal{G}$ is rigid, there exists a commutative diagram \[\xymatrix@=15mm{G_0 \ar[d]_{\cong}^{\psi_0} & \ldots \ar[l]_{\varphi_1} & G_n \ar[l]_{\varphi_n} \ar[d]_{\cong}^{\psi_n} & G_{n+1} \ar[l]_{\varphi_{n+1}} \ar[d]_{\cong}^{\psi_{n+1}} \\ G_1 & \ldots \ar[l]_{\varphi_2} & G_{n+1} \ar[l]_{\varphi_{n+1}} & G_{n+2} \ar[l]_{\varphi_{n+2}}}\]

Since $\psi_0 \varphi_1 \ldots \varphi_n = \varphi_2  \ldots  \varphi_{n+1}  \psi_n$, evaluating both sides of the equality on $G_n$ and taking into account that $\psi_n(G_n) = G_{n+1}$ because $\psi_n$ is onto it follows that $\psi_0(F_n) = (\varphi_2  \ldots  \varphi_{n+1})(G_{n+1})$. Similarly $\psi_0(F_{n+1}) = (\varphi_2  \ldots  \varphi_{n+2})(G_{n+2})$. Since $F_n = F_{n+1}$, we see that $(\varphi_2  \ldots  \varphi_{n+1})(G_{n+1}) = (\varphi_2  \ldots  \varphi_{n+2})(G_{n+2})$ and applying $\varphi_1$ to both sides of the equality yields $F_{n+1} = F_{n+2}$. Similar arguments show that $F_{n+1} = F_{n+2} = \ldots$

(3) The proof goes along the lines of part (2) and is left to the reader.
\end{proof}

Proposition \ref{prop:one}.2 implies that $(S)$ has either the structure \[G_0 = F_0 \gneqq F_1 \gneqq \ldots \gneqq F_n = F_{n+1} = \ldots \ \ \ \ \text{(if it stabilizes)}\] or \[G_0 = F_0 \gneqq F_1 \gneqq F_2 \gneqq \ldots\ \ \ \ \text{(if it does not stabilize).}\]

Now we recall two algebraic notions that will be key for our argument. A group $G$ is \emph{residually finite} if for every $g \in G \backslash \{1\}$ there exists a homomorphism $\alpha_g$ of $G$ to a finite group $F_g$ such that $\alpha_g(g) \neq 1$. It follows immediately from the definition that subgroups and direct sums of residually finite groups are again residually finite.

\begin{example} Finite groups are trivially residually finite. A finitely generated abelian group $G$ decomposes as a direct sum of $\mathbb{Z}$ and $\mathbb{Z}_n$ summands, which are easily seen to be residually finite; hence $G$ is residually finite as well.
\end{example}

A group $G$ is \emph{Hopfian} if every epimorphism $\alpha : G \longrightarrow G$ is an isomorphism. The following remark is trivial, but worth noting:

\begin{remark} If $G$ and $H$ are isomorphic groups, $G$ is Hopfian and $\alpha : G \longrightarrow H$ is an epimorphism, then $\alpha$ is an isomorphism.
\end{remark}

We will also need a deep result of Malcev: a finitely generated and residually finite group is Hopfian \cite[Theorem 4.10, p. 197]{lyndonschupp1}. With this we can prove the main result of this section, which is the following theorem.

\begin{theorem} \label{teo:algebra} Let $\mathcal{G}$ be as above. Assume that
\begin{enumerate}
	\item[({\it a}\/)] $\mathcal{G}$ is rigid.
	\item[({\it b}\/)] $(S)$ stabilizes.
	\item[({\it c}\/)] $G_0$ is residually finite and finitely generated.
\end{enumerate}
Then the map $\overline{\varphi}  \underline{\varphi} : \underline{G} \longrightarrow \overline{G}$ is an isomorphism. More precisely, if $(S)$ stabilizes at $F_n$ then the two arrows in the following diagram are isomorphisms: \[\xymatrix{\underline{G} \ar[r]^{\underline{\varphi}} & F_n \ar[r]^{\overline{\varphi}|_{F_n}} & \overline{G}}\]
\end{theorem}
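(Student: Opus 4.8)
The plan is to isolate inside each group $G_k$ a ``stable'' subgroup $\widetilde F_k$ on which every bonding map restricts to an \emph{isomorphism}, and then to recognize $\underline G$ and $\overline G$ as the inverse and direct limits of this improved sequence, from which the two displayed isomorphisms will drop out at once. Assume, as in ({\it b}\/), that $(S)$ stabilizes at $F_n$, so $\bigcap_{m} F_m = F_n$. For $k \in \mathbb{Z}$ and $m \geq 1$ put $F^{(k)}_m := {\rm im}(\varphi_{k+1}\varphi_{k+2}\cdots\varphi_{k+m}) \leq G_k$, so that $F^{(0)}_m = F_m$. Feeding the length-$m$ blocks of $\mathcal{G}$ based at $0$ and at $k$ into the rigidity diagram, the isomorphism $G_0 \longrightarrow G_k$ that it provides carries $F_m$ onto $F^{(k)}_m$; hence the decreasing chain $F^{(k)}_1 \geq F^{(k)}_2 \geq \ldots$ is constant for $m \geq n$, with value $\widetilde F_k := \bigcap_{m} F^{(k)}_m$, and $\widetilde F_k$ is isomorphic to $\widetilde F_0 = F_n$. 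Since $\widetilde F_k = {\rm im}(\varphi_{k+1}\cdots\varphi_{k+n})$ is a homomorphic image of $G_{k+n}$, which by Proposition \ref{prop:one}.1 is isomorphic to the finitely generated group $G_0$, and since $\widetilde F_k$ is at the same time a subgroup of $G_k \cong G_0$ and hence residually finite, Malcev's theorem shows that every $\widetilde F_k$ is Hopfian.

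Next I would show that $\varphi_k(\widetilde F_k) = \widetilde F_{k-1}$ for all $k$. The inclusion $\subseteq$ is immediate from $\varphi_k\big({\rm im}(\varphi_{k+1}\cdots\varphi_l)\big) = {\rm im}(\varphi_k\varphi_{k+1}\cdots\varphi_l)$. For the reverse inclusion one picks a single $N \geq n$ so large that \emph{simultaneously} $\widetilde F_k = {\rm im}(\varphi_{k+1}\cdots\varphi_{k+N})$ and $\widetilde F_{k-1} = {\rm im}(\varphi_k\varphi_{k+1}\cdots\varphi_{k+N})$ --- legitimate precisely because both chains have stabilized --- whence $\varphi_k(\widetilde F_k) = \varphi_k\big({\rm im}(\varphi_{k+1}\cdots\varphi_{k+N})\big) = {\rm im}(\varphi_k\varphi_{k+1}\cdots\varphi_{k+N}) = \widetilde F_{k-1}$. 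Thus $\varphi_k|_{\widetilde F_k} : \widetilde F_k \longrightarrow \widetilde F_{k-1}$ is an epimorphism between two groups each isomorphic to the Hopfian group $F_n$, so by the remark that an epimorphism $G \to H$ with $G$ Hopfian and $G \cong H$ is an isomorphism, $\varphi_k|_{\widetilde F_k}$ is itself an isomorphism.

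Finally I would carry this over to the limits. Every coherent thread $(g_k)_k$ of $\underline G$ satisfies $g_k = \varphi_{k+1}\varphi_{k+2}\cdots\varphi_l(g_l) \in F^{(k)}_{l-k}$ for each $l > k$, so $g_k \in \widetilde F_k$; therefore $\underline G = \varprojlim(\widetilde F_k, \varphi_k|_{\widetilde F_k})$, an inverse sequence of isomorphisms, and the projection $\underline\varphi$ of $\underline G$ onto its zeroth term $\widetilde F_0 = F_n$ is an isomorphism. Dually, every class of $\overline G$ is represented by some element of some $G_k$, and pushing this element $n$ steps forward along $\varphi_{k-n+1}\cdots\varphi_k$ lands it in $F^{(k-n)}_n = \widetilde F_{k-n}$; since, moreover, the $\varphi_j$ carry the $\widetilde F$'s isomorphically onto one another, $\overline G = \varinjlim(\widetilde F_k, \varphi_k|_{\widetilde F_k})$, a direct sequence of isomorphisms, and the coprojection of its zeroth term --- which is exactly $\overline\varphi|_{F_n} : F_n \longrightarrow \overline G$ --- is an isomorphism. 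Composing, $\overline\varphi\,\underline\varphi : \underline G \longrightarrow \overline G$ is an isomorphism, as claimed.

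The one step I expect to be genuinely delicate is the \emph{surjectivity} half of the equality $\varphi_k(\widetilde F_k) = \widetilde F_{k-1}$: plain rigidity does not deliver it, and it is the stabilization hypothesis ({\it b}\/) that permits the choice of a single $N$ serving two consecutive indices. This is exactly where the present argument goes beyond the Mittag--Leffler bookkeeping that proved surjectivity of $i_*$ in Theorem \ref{teo:main}; the finite generation and residual finiteness of $G_0$, channelled through Malcev's theorem, are then what turn those surjections into isomorphisms and so yield the injectivity of the limit map with no further geometric input.
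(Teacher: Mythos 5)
Your proof is correct and follows essentially the same route as the paper's: pass to the (stabilized) images of the bonding maps, use rigidity plus stabilization to show the restricted bonding maps are epimorphisms, and invoke Malcev's theorem (finitely generated and residually finite implies Hopfian) to upgrade them to isomorphisms, after which the identification of $\underline{G}$ and $\overline{G}$ with the limits of the restricted sequence is immediate --- the paper merely normalizes first to $F_1 = F_2$ and works with the one-step images $G'_k = {\rm im}\ \varphi_{k+1}$ instead of your $\widetilde F_k$. One point worth tightening: to get the \emph{equality} $F^{(k)}_m = F^{(k)}_n$ for $m \geq n$ (not just an isomorphism) you should use a single rigidity block of length $m$ and observe that its partial squares also commute with surjective vertical maps, so one and the same isomorphism $G_0 \longrightarrow G_k$ carries $F_j$ onto $F^{(k)}_j$ for every $j \leq m$, in particular carrying $F_n = F_m$ onto both $F^{(k)}_n$ and $F^{(k)}_m$.
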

\begin{proof} Since $(S)$ stabilizes, $F_n = F_{n+1}$ for some $n$. We leave it to the reader to show that there is no loss in generality in assuming, as we will, that $F_1 = F_2$.

Denote $G'_n := {\rm im}\ \varphi_{n+1} = \varphi_{n+1}(G_{n+1})$. If we replace each term of $\mathcal{G}$ by the image of the bonding morphism entering it, and the bonding morphisms by the obvious restrictions, we get the sequence $\mathcal{G'}$ \[\xymatrix@=15mm{ \ldots & G'_{-1} \ar[l]_{\varphi_{-1}|_{G'_{-1}}} & G'_0 \ar[l]_{\varphi_0|_{G'_0}} & G'_1 \ar[l]_{\varphi_1|_{G'_1}} & G'_2 \ar[l]_{\varphi_2|_{G'_2}} & \ldots \ar[l]_{\varphi_3|_{G'_3}}} \] and it is not difficult to see that its inverse and direct limits $\underline{G'}$ and $\overline{G'}$ can be identified with those of $\mathcal{G}$, namely $\underline{G}$ and $\overline{G}$. Under these identifications the natural map from $\underline{G'}$ to $G'_0$ is precisely $\underline{\varphi} : \underline{G} \longrightarrow G'_0$ and the natural map from $G'_0$ to $\overline{G'}$ is precisely $\overline{\varphi}|_{G'_0} : G'_0 \longrightarrow \overline{G}$. Thus to prove the theorem it will be enough to show that $\varphi_n|_{G'_n}$ is an isomorphism for each $n \in \mathbb{Z}$.

Fix some $n \in \mathbb{Z}$. The rigidity condition on $\mathcal{G}$ implies that there is a commutative diagram \[\xymatrix@=15mm{G_0 \ar[d]^{\psi_0} & G_1 \ar[l]_{\varphi_1} \ar[d]^{\psi_1} & G_2 \ar[l]_{\varphi_2} \ar[d]^{\psi_2} \\ G_n & G_{n+1} \ar[l]^{\varphi_{n+1}} & G_{n+2} \ar[l]^{\varphi_{n+2}}}\] where $\psi_0$, $\psi_1$ and $\psi_2$ are isomorphisms. Since $\psi_0  \varphi_1 = \varphi_{n+1}  \psi_1$, it follows that $\psi_0  \varphi_1(G_1) = \varphi_{n+1}  \psi_1(G_1)$ so $\psi_0(G'_0) = \varphi_{n+1}(G_{n+1}) = G'_n$, and similarly $\psi_1(G'_1) = G'_{n+1}$. Therefore we have a commutative diagram \[\xymatrix@=15mm{G'_0 \ar[d]^{{\psi_0}|_{G'_0}}_{\cong} & G'_1 \ar[l]_{{\varphi_1}|_{G'_1}} \ar[d]^{{\psi_1}|_{G'_1}}_{\cong} \\ G'_n & G'_{n+1} \ar[l]^{\varphi_{n+1}|_{G'_{n+1}}} }\] where the vertical arrows are still isomorphisms. Since the upper arrow is surjective because \[\varphi_1(G'_1) = \varphi_1(\varphi_2(G_2)) = (\varphi_1  \varphi_2)(G_2) = \varphi_1(G_1) = G'_0,\] the third equality being a consequence of assumption ({\it b}\/), it follows that the lower arrow is surjective too. Observe also that $G'_n$ is isomorphic to $G'_0$; since this is true for all $n$, all the $G'_n$ are isomorphic to each other.

$G'_0$ is residually finite because it is a subgroup of the residually finite group $G_0$. Also, since $G_1$ is isomorphic to $G_0$ by rigidity, $G_1$ is finitely generated and consequently $G'_0 = \varphi_1(G_1)$ is finitely generated too. Thus by Malcev's theorem $G'_0$ is Hopfian, and as a consequence all the $G'_n$ are Hopfian. Then each map $\varphi_n|_{G'_n}$ is an epimorphism between two isomorphic Hopfian groups, and therefore they are all isomorphisms.
\end{proof}

Notice that $F_n$, being defined as $(\varphi_1 \ldots \varphi_n)(G_n)$, is both a subgroup of $G_0$ and a quotient of $G_n$. If $\mathcal{G}$ is rigid then $G_n$ is isomorphic to $G_0$, so $F_n$ is also a quotient of $G_0$. This proves the following:

\begin{remark} \label{rem:fg} Under the hypotheses of Theorem \ref{teo:algebra}, $\underline{G}$ and $\overline{G}$ are isomorphic to both a subgroup and a quotient of $G_0$.
\end{remark}

In order to illustrate how the abstract algebraic setting just described fits our dynamical picture we shall prove Theorem \ref{teo:hom} below, which is a homological counterpart of Theorem \ref{teo:main}. Notice how there are no hypotheses on $K$ whatsoever.

\begin{theorem} \label{teo:hom} Let $K$ be an attractor in a locally compact, metrizable ANR $M$. Then the inclusion $i : K \subseteq \mathcal{A}(K)$ induces isomorphisms $i_* : \check{H}_d(K;\mathbb{Z}_2) \longrightarrow H_d(\mathcal{A}(K);\mathbb{Z}_2)$ for all $d \geq 0$. Moreover, the groups $\check{H}_d(K;\mathbb{Z}_2)$ are all finite.
\end{theorem}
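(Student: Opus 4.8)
The plan is to mimic the structure of Theorem~\ref{teo:main}, but to work with \v{C}ech homology with $\mathbb{Z}_2$ coefficients in place of the shape pro-groups, and to feed everything into the abstract machinery of Theorem~\ref{teo:algebra}. As in Remark~\ref{rem:simple} we may assume $K$ is a global attractor and $M=\mathcal{A}(K)$, and by Proposition~\ref{prop:basica} we may assume $K$ and $M$ connected. Pick a neighbourhood $U$ of $K$ as in Claim~2 of the proof of Theorem~\ref{teo:main}, so that (after replacing $f$ by a power) $fU\subseteq U$, $U$ is an ANR, and $\bigcap_{n\geq 1}f^nU=K$. For each fixed $d\geq 0$, apply the homology functor $H_d(-;\mathbb{Z}_2)$ to the biinfinite tower $\{f^nU\}_{n\in\mathbb{Z}}$ to obtain a sequence $\mathcal{G}$ of $\mathbb{Z}_2$-vector spaces and linear maps, exactly the homological analogue of $\mathcal{U}$. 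Because each $f^nU$ is an ANR, its \v{C}ech and singular homology agree, and continuity of \v{C}ech homology on the compact set $K$ gives $\underline{G}=\varprojlim\mathcal{G}=\check{H}_d(K;\mathbb{Z}_2)$; since the interiors of the $f^nU$ exhaust $M$, we get $\overline{G}=\varinjlim\mathcal{G}=H_d(M;\mathbb{Z}_2)=H_d(\mathcal{A}(K);\mathbb{Z}_2)$, and the composite $\overline{\varphi}\,\underline{\varphi}$ is precisely $i_*$.

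Next I would check the three hypotheses of Theorem~\ref{teo:algebra}. Rigidity of $\mathcal{G}$ is immediate and is the homological shadow of the observation already used repeatedly: $f$ carries the pair $(f^nU,f^{n+m}U)$ homeomorphically onto $(f^{n+1}U,f^{n+1+m}U)$, so it induces a ladder of isomorphisms between the relevant finite segments of $\mathcal{G}$, which is exactly the rigidity diagram. For hypothesis (c), note that $U$ is a connected ANR, hence has the homotopy type of a polyhedron; a priori this polyhedron need not be finite, so $H_d(U;\mathbb{Z}_2)=G_0$ need not be finitely generated. This is the one genuine obstacle, and it is circumvented by \emph{shrinking} $U$: since $K$ is compact and $M$ is a locally compact ANR, $K$ has arbitrarily small neighbourhoods that are compact ANRs (e.g. take $\overline{U}$ with $U$ as in Claim~2 and then a compact ANR neighbourhood of $K$ inside it), and a compact ANR has finitely generated homology in every degree. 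With $G_0=H_d(U;\mathbb{Z}_2)$ a finite-dimensional $\mathbb{Z}_2$-vector space, it is in particular a finite abelian group, hence finitely generated and residually finite, so (c) holds. (One should check that shrinking $U$ does not spoil Claim~2; it does not, since the shrinking step in Claim~2 is exactly this.)

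It remains to verify hypothesis (b): that the decreasing sequence $(S)$ of subgroups $F_n=\operatorname{im}(\varphi_1\cdots\varphi_n)\leq G_0$ stabilizes. Here the finiteness of $G_0$ does the work for us — a decreasing chain of subgroups of a finite group is eventually constant — so (b) is automatic once the shrinking has been performed, and no Mittag--Leffler hypothesis on $\operatorname{pro-}H_d(K)$ needs to be assumed. Theorem~\ref{teo:algebra} then yields that $\overline{\varphi}\,\underline{\varphi}=i_*:\check{H}_d(K;\mathbb{Z}_2)\longrightarrow H_d(\mathcal{A}(K);\mathbb{Z}_2)$ is an isomorphism, and Remark~\ref{rem:fg} identifies $\check{H}_d(K;\mathbb{Z}_2)$ with a quotient of the finite group $G_0$, hence finite; this proves the ``moreover'' clause. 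Finally one should remark that these conclusions are independent of the auxiliary choices (the neighbourhood $U$ and the power of $f$), as usual for inverse/direct limits, so the statement as phrased for $f$ and $\mathcal{A}(K)$ follows. The main subtlety, then, is not any hard topology but the bookkeeping point that one must pass to a \emph{compact} ANR neighbourhood of $K$ so that the coefficient group $G_0$ becomes finite, after which rigidity and the stabilization of $(S)$ are essentially free; working with $\mathbb{Z}_2$ coefficients makes this finiteness especially clean (a finite-dimensional vector space) and sidesteps any torsion worries.
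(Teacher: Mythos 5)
Your argument coincides with the paper's proof in the case it calls assumption $(A)$: once you have a \emph{compact} ANR neighbourhood $P\subseteq\mathcal{A}(K)$ with $fP\subseteq P$, the identification of $\check{H}_d(K;\mathbb{Z}_2)$ and $H_d(\mathcal{A}(K);\mathbb{Z}_2)$ with the inverse and direct limits of the homological tower, the rigidity check via the homeomorphisms $f^n$, the observation that $H_d(P;\mathbb{Z}_2)$ is finite so that $(S)$ stabilizes and hypothesis ({\it c}\/) of Theorem \ref{teo:algebra} is trivially satisfied, and the appeal to Remark \ref{rem:fg} for the finiteness of $\check{H}_d(K;\mathbb{Z}_2)$, are exactly the steps in the paper.

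The genuine gap is your claim that in an arbitrary locally compact, metrizable ANR the attractor ``has arbitrarily small neighbourhoods that are compact ANRs (e.g.\ take $\overline{U}$ with $U$ as in Claim~2 \dots)''. Claim~2 only yields an \emph{open} ANR neighbourhood with compact closure; the closure $\overline{U}$ of an open subset of an ANR need not be locally contractible at boundary points, hence need not be an ANR, and there is no general theorem guaranteeing that a compactum inside a locally compact metrizable ANR admits compact ANR neighbourhoods at all. This existence question is precisely the obstacle the paper does not try to resolve directly: it proves the theorem only under assumption $(A)$ and reduces the general case to it by the detour of Appendix \ref{app:trick} --- replacing $f$ by $\hat{f}=f\times{\rm id}_Q\times(t\mapsto t^2)$ on $\mathcal{A}(K)\times Q\times[0,1)$, invoking Edwards' theorem that $\mathcal{A}(K)\times Q$ is a $Q$--manifold and Chapman's triangulability of $Q$--manifolds to produce a compact ANR neighbourhood $N\times Q$ of $\hat{K}=K\times Q\times 0$, and then transferring the conclusion back to $K$ and $\mathcal{A}(K)$ through the vertical homotopy equivalences of the final commutative square. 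Without either this infinite--dimensional trick or a proof that compact ANR neighbourhoods exist in your setting (they do, for instance, when $M$ is a manifold, via regular neighbourhoods, which is how Section \ref{sec:3mfds} proceeds), your argument only establishes the theorem under assumption $(A)$, not in the stated generality.
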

\begin{proof} We prove the theorem under the additional assumption \[(A) : \text{\emph{$K$ has a neighbourhood $P \subseteq \mathcal{A}(K)$ that is a compact ANR}}\] which simplifies our argument but still covers most of the cases of interest. The general case can be reduced to this one using a detour into infinite dimensions as described in Appendix \ref{app:trick}.

After replacing $f$ by a suitable power of itself we may assume that $fP \subseteq P$, so $f^{n+1}P \subseteq f^nP$ for every $n \in \mathbb{Z}$. Now fix a dimension $d \geq 0$. Denote by $\varphi_{n+1} : H_d(f^{n+1}P;\mathbb{Z}_2) \longrightarrow H_d(f^nP;\mathbb{Z}_2)$ the inclusion induced homomorphisms and consider the biinfinite sequence \[\mathcal{H} : \xymatrix{\ldots & H_d(f^{-1}P;\mathbb{Z}_2) \ar[l]_-{\varphi_{-1}} & H_d(P;\mathbb{Z}_2) \ar[l]_-{\varphi_0} & H_d(fP;\mathbb{Z}_2) \ar[l]_-{\varphi_1} & \ldots \ar[l]_-{\varphi_2}}\]

The inverse limit of $\mathcal{H}$ is precisely $\check{H}_d(K;\mathbb{Z}_2)$, and its direct limit is $H_d(\mathcal{A}(K);\mathbb{Z}_2)$. It is not difficult to check that $i_* : \check{H}_d(K;\mathbb{Z}_2) \longrightarrow H_d(\mathcal{A}(K);\mathbb{Z}_2)$ is precisely $\overline{\varphi} \underline{\varphi}$, so to prove the theorem we only need to check that the hypotheses of Theorem \ref{teo:algebra} are met.
\medskip

({\it a}\/) To check that $\mathcal{H}$ is rigid, let $m,n \in \mathbb{Z}$, $m \geq 1$, and consider the following commutative diagram: \[\xymatrix@=15mm{H_d(P;\mathbb{Z}_2) \ar[d]_{(f^n|_{P})_*} & H_d(fP;\mathbb{Z}_2) \ar[l]_{\varphi_1} \ar[d]_{(f^n|_{fP})_*} & \ldots \ar[l]_{\varphi_2} \ar@{}[d]|{\vdots} & H_d(f^mP;\mathbb{Z}_2) \ar[l]_{\varphi_m} \ar[d]_{(f^n|_{f^mP})_*} \\ H_d(f^nP;\mathbb{Z}_2) & H_d(f^{n+1}P;\mathbb{Z}_2) \ar[l]^{\varphi_{n+1}} & \ldots \ar[l]^{\varphi_{n+2}} & H_d(f^{n+m}P;\mathbb{Z}_2) \ar[l]^{\varphi_{n+m}}}\] The vertical arrows are isomorphisms, because $f^n|_P, f^n|_{fP}, \ldots, f^n|_{f^mP}$ are homeomorphisms onto their images. Hence $\mathcal{H}$ is rigid.
\smallskip

({\it b}\/) Each inclusion $f^nP \subseteq P$ induces a homomorphism $H_d(f^nP;\mathbb{Z}_2) \longrightarrow H_d(P;\mathbb{Z}_2)$ whose image is precisely $F_n = {\rm im}\ (\varphi_1 \ldots \varphi_n)$, and we need to check that the sequence \[(S) : H_d(P;\mathbb{Z}_2) \supseteq F_1 \supseteq F_2 \supseteq \ldots\] stabilizes. Here we use assumption $(A)$: since $P$ is a compact ANR, $H_d(P;\mathbb{Z}_2)$ is finitely generated and therefore finite, so obviously $(S)$ stabilizes.
\smallskip

({\it c}\/) We have just seen that $H_d(P;\mathbb{Z}_2)$ is finite. Thus it is clearly finitely generated and residually finite.
\medskip
 
Theorem \ref{teo:algebra} immediately implies that $i_*$ is an isomorphism. Also, Remark \ref{rem:fg} shows that $\check{H}_d(K;\mathbb{Z}_2)$, being a subgroup of the finite group $H_d(P;\mathbb{Z}_2)$, is finite.
\end{proof}

Joint work with Professor F. R. Ruiz del Portal, motivated by a suggestion of his, has led to a neat generalization of Theorem \ref{teo:hom} to homology with rational and integer coefficients which will the subject of a forthcoming paper.\footnote{The generalization of Theorem \ref{teo:hom} mentioned above has already appeared in print \cite{pacoyo1}. In spite of this we have decided to still include Theorem \ref{teo:hom} here in order to make the paper as self contained as possible, since this result is used later on in the proof of Theorems \ref{teo:2mfds} and \ref{teo:3var}.}

\section{Attractors in surfaces \label{sec:2mfds}}

Now we assume that $K$ is a connected attractor in a connected $2$--manifold $M$. The key result is a characterization of what compacta $K \subseteq M$ have polyhedral shape:

\begin{lemma} \label{lem:2mfds} Let $K$ be a continuum in a surface $M$. If $\check{H}_1(K;\mathbb{Z}_2)$ is finitely ge{\-}nerated, then $K$ has the shape of a wedge sum of $r = {\rm rk}\ \check{H}^1(K;\mathbb{Z}_2)$ circumferences. In particular, $K$ has the shape of a polyhedron.
\end{lemma}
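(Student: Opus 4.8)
The plan is to realize $K$ as a nested intersection of compact surfaces-with-boundary sitting in $M$, and then to push the (co)homological finiteness hypothesis through to a concrete shape description. First I would fix a small compact surface neighbourhood: since $M$ is a $2$--manifold and $K$ is a continuum, $K$ has arbitrarily small neighbourhoods $P \subseteq M$ that are compact, connected $2$--submanifolds with boundary (take a regular neighbourhood of a polyhedral approximation, or use a triangulation of $M$ and a subcomplex). Iterating, I obtain a decreasing sequence $P_1 \supseteq P_2 \supseteq \cdots$ of such compact surfaces with $\bigcap_n P_n = K$, where I may assume $P_{n+1} \subseteq \operatorname{int} P_n$. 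Then $\check{H}^1(K;\mathbb{Z}_2) = \varinjlim H^1(P_n;\mathbb{Z}_2)$ and $\check{H}_1(K;\mathbb{Z}_2) = \varprojlim H_1(P_n;\mathbb{Z}_2)$, the bonding maps being induced by inclusion. The first key step is to argue that the hypothesis ``$\check{H}_1(K;\mathbb{Z}_2)$ finitely generated'' forces the inverse system $\{H_1(P_n;\mathbb{Z}_2)\}$ to be pro-isomorphic to a constant system; since each $H_1(P_n;\mathbb{Z}_2)$ is a finite-dimensional $\mathbb{Z}_2$--vector space, the images of the bonding maps form a decreasing chain of subspaces of $H_1(P_1;\mathbb{Z}_2)$, hence stabilize, so after discarding finitely many terms and replacing each $P_n$ by a subsurface I may assume every inclusion $P_{n+1}\hookrightarrow P_n$ induces an \emph{injection} (equivalently, after passing to images, an isomorphism) on $H_1(-;\mathbb{Z}_2)$ of constant rank $r$.

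The second, more geometric step is to upgrade this homological stabilization to a statement about $\pi_1$, and then to identify the shape type. On a compact surface with boundary, $H_1(-;\mathbb{Z}_2)$ of rank $r$ together with the inclusion of a subsurface inducing an isomorphism on $H_1(-;\mathbb{Z}_2)$ strongly constrains the topology: I would use the classification of compact surfaces with boundary, noting $\pi_1(P_n)$ is free (of rank equal to $1-\chi(P_n)$) because $P_n$ is homotopy equivalent to a wedge of circles or, if nonorientable, still to a graph --- indeed every compact surface with nonempty boundary is homotopy equivalent to a finite wedge of circles. The condition that $H_1(P_{n+1};\mathbb{Z}_2)\to H_1(P_n;\mathbb{Z}_2)$ be an isomorphism, combined with the fact that the map factors a wedge of circles through a wedge of circles, lets me conclude that $\pi_1(P_{n+1})\to\pi_1(P_n)$ has image a free factor of the appropriate rank and, by passing to a further subsurface (a regular neighbourhood of a wedge of $r$ circles carrying the stable homology, obtained e.g. by capping off inessential boundary components and removing trivial handles), I may arrange that $P_{n+1}\subseteq P_n$ is a homotopy equivalence for all large $n$, with each $P_n \simeq \bigvee_r \mathbb{S}^1$. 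Then the inverse system defining the shape of $K$ is, up to pro-isomorphism, the constant system $\bigvee_r\mathbb{S}^1 \xleftarrow{\mathrm{id}} \bigvee_r\mathbb{S}^1 \xleftarrow{\mathrm{id}} \cdots$, so $K$ has the shape of $\bigvee_r \mathbb{S}^1$. Finally $r = \operatorname{rk}\check{H}^1(K;\mathbb{Z}_2) = \operatorname{rk}\check{H}_1(K;\mathbb{Z}_2)$ by universal coefficients over the field $\mathbb{Z}_2$, and a finite wedge of circles is a compact polyhedron.

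The main obstacle I anticipate is the passage from ``the bonding maps on $H_1(-;\mathbb{Z}_2)$ are eventually isomorphisms'' to ``the $P_n$ can be chosen so that the bonding \emph{maps themselves} are homotopy equivalences.'' A priori an inclusion $P_{n+1}\hookrightarrow P_n$ of compact bordered surfaces inducing an $H_1(-;\mathbb{Z}_2)$--isomorphism need not be a homotopy equivalence --- for instance $P_n$ could have extra boundary components or the inclusion could be $\pi_1$--surjective but not injective. I would handle this by a surgery/normalization argument on the surfaces: replace $P_n$ by the union of $K$ with the components of $P_n\setminus P_{n+1}$ that are ``topologically trivial'' relative to $P_{n+1}$, cap off boundary circles of $P_n$ that bound disks in $M\setminus\operatorname{int}P_{n+1}$ containing no point of $K$, and discard handles of $P_n$ not hit by $P_{n+1}$; the homological stabilization is exactly what guarantees that after finitely many such moves nothing essential is lost, so the adjusted inclusions become homotopy equivalences. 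Making this bookkeeping precise --- in particular checking it can be done compatibly for the whole cofinal subsequence and that the intersection is still $K$ --- is the technical heart of the argument; everything else (universal coefficients, freeness of $\pi_1$ of bordered surfaces, the fact that a wedge of circles is a polyhedron, and the identification of $\varprojlim$ of the resulting constant system) is routine.
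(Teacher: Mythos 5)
Your strategy --- realizing $K$ as the intersection of nested compact bordered surfaces $P_n$ and upgrading a homological stabilization to homotopy equivalences between the $P_n$ --- is genuinely different from the paper's proof, which is short and non-constructive: it invokes Nowak to get ${\rm sd}\ K \leq 1$, Krasinkiewicz to get pointed movability, and Trybulec's classification (finite bouquet of circles or the Hawaiian earring), and then uses the finite generation of $\check{H}_1(K;\mathbb{Z}_2)$ only to exclude the Hawaiian earring. The paper explicitly remarks that an elementary proof along your lines exists but is ``rather lengthy'', and that is exactly where your proposal stops short.

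There are two concrete gaps. First, the algebraic step is misstated: the images of $H_1(P_n;\mathbb{Z}_2)$ in $H_1(P_1;\mathbb{Z}_2)$ stabilize for free, by finite dimensionality, without any use of the hypothesis, and this alone neither makes the pro-group constant nor yields injective bonding maps of constant rank. What the hypothesis actually buys is this: the tower is automatically Mittag--Leffler, so the image of $\check{H}_1(K;\mathbb{Z}_2)=\varprojlim H_1(P_n;\mathbb{Z}_2)$ in each term equals the stable image there; finite generation of the limit then bounds the dimensions of the stable images, which are non-decreasing along the (surjective) restricted bonding maps, so they are eventually constant of some rank $r$ --- that is the statement you need, and it requires this argument, not the decreasing-chain remark. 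Second, and more seriously, the step you yourself defer as the ``technical heart'' --- turning eventual isomorphisms on $H_1(\cdot;\mathbb{Z}_2)$ into inclusions $P_{n+1}\subseteq P_n$ that are homotopy equivalences --- is precisely the hard part and is not routine: an inclusion of bordered surfaces inducing an isomorphism on $H_1(\cdot;\mathbb{Z}_2)$ need not be $\pi_1$-injective (compressible curves), and even a $\pi_1$-injective map of free groups of equal finite rank inducing a mod $2$ homology isomorphism need not be onto (e.g.\ $\langle a,b^3\rangle \leq F(a,b)$), so genuine surface topology is needed (compressions, discarding complementary pieces, genus and orientability bookkeeping) together with a termination argument, all while keeping the modified $P_n$ a nested, cofinal system of neighbourhoods of $K$. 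As written the proposal is a plausible plan for the ``lengthy elementary proof'' the paper alludes to, not a proof; note also that for $M=\mathbb{R}^2$ one can shortcut exactly as the paper does, via Borsuk's theorem that planar continua are shape-classified by their first Betti number.
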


When $M = \mathbb{R}^2$ we can appeal to a result of Borsuk \cite[Theorem 7.1, p. 221]{borsukshape1} which states that two continua $K, K' \subseteq \mathbb{R}^2$ have the same shape if, and only if, they have the same first Betti numbers; that is, ${\rm rk}\ \check{H}^1(K;\mathbb{Z}_2) = {\rm rk}\ \check{H}^1(K';\mathbb{Z}_2)$. Under the hypotheses and notation of Lemma \ref{lem:2mfds}, let $K'$ be a wedge sum of $r$ polyhedral circumferences in the plane. Evidently $K$ and $K'$ have the same first Betti number, so they have the same shape. As a consequence, $K$ has the shape of a polyhedron (namely $K'$). In the general case (when $M$ is an arbitrary surface) it is possible to give an elementary proof of the lemma along the lines of Borsuk's result, but the argument is rather lengthy. Thus we will give an alternative short proof, at the cost of losing much of its intuitive content.

\begin{proof}[Proof of Lemma \ref{lem:2mfds}] We assume $M$ connected without loss of generality. If $K = M$ there is nothing to prove, because then $K$ is a compact surface, hence a polyhedron. So we consider the case when $K$ is a proper subset of $M$. A result of Nowak \cite[Corollary 3.9, p. 220]{nowak1} implies that ${\rm sd}\ K \leq 1$ and a result of Krasinkiewicz \cite[Theorem 7.2, p. 162]{krasinkiewicz1} guarantees that $K$ is pointed movable. Trybulec \cite[Remark 2, p. 203]{mardesic1} has proved that such a $K$ has the shape of a bouquet of a finite number of circumferences (possibly zero, i. e. a single point) or of the Hawaiian earring. But $\check{H}_1(K;\mathbb{Z}_2)$ is finitely generated by assumption, which rules out the second possibility.
\end{proof}

\begin{proof}[Proof of Theorem \ref{teo:2mfds}] By Theorem \ref{teo:hom} the group $\check{H}_1(K;\mathbb{Z}_2)$ is finitely generated. Then Lemma \ref{lem:2mfds} implies that $K$ has the shape of a polyhedron, so Theorem \ref{teo:main} applies.
\end{proof}

\section{Attractors in $\mathbb{R}^3$ \label{sec:3mfds}}

For this section we fix the following notation:
\medskip

{\bf Notation.} $f$ is a homeomorphism of $\mathbb{R}^3$ having a connected attractor $K$. $P \subseteq \mathbb{R}^3$ is a connected compact $3$--manifold (with boundary) contained in $\mathcal{A}(K)$ that satisfies $fP \subseteq {\rm int}\ P$. In particular $K \subseteq {\rm int}\ P$.
\medskip

When confronted with a particular $f$, the standard way to prove that it has an attractor is precisely by finding a trapping region $P$ which, in most cases, will satisfy the conditions enumerated above. However, let us show that (for the purposes of this paper) we can assume without loss of generality that a $P$ as described always exists. First observe that by Proposition \ref{prop:basica} there is no loss in generality in assuming that $K$ is connected. For each $p \in K$ pick a small closed cube centered at $p$ and contained in $\mathcal{A}(K)$. This family of cubes covers $K$ and, since $K$ is compact, we may discard all but finitely many of these cubes while still covering $K$. The union of the remaining cubes is a compact polyhedron $Q$ that contains $K$ and is contained in $\mathcal{A}(K)$. Finally, let $P$ be a regular neighbourhood of $Q$ (in the sense of piecewise linear topology; see for instance \cite[Chapter 3, pp. 31ff.]{rourkesanderson1}) so small that it is still contained in $\mathcal{A}(K)$. Then $P$ is a compact, connected $3$--manifold \cite[Proposition 3.10, p. 34]{rourkesanderson1} that is a neighbourhood of $K$ contained in $\mathcal{A}(K)$. In particular, it is attracted by $K$ and so there exists $N \geq 1$ such that $f^N P \subseteq {\rm int}\ P$. Replacing $f$ with $f^N$ (something that we are entitled to do by Proposition \ref{prop:potencia}) we may simply assume that $fP \subseteq {\rm int}\ P$.
\medskip

Choose a basepoint $* \in K$ and consider the sequence \[\mathcal{H} :  \xymatrix{\ldots & \pi_1(f^{-1}P,*) \ar[l]_-{\varphi_{-1}} & \pi_1(P,*) \ar[l]_-{\varphi_0} & \pi_1(fP,*) \ar[l]_-{\varphi_1} & \ldots \ar[l]_-{\varphi_2}} \] where the $\varphi_n$ denote the inclusion induced homomorphisms. As usual, for each $n \geq 0$ we denote $F_n := {\rm im}\ (\varphi_1  \ldots  \varphi_n)$, which in this case is nothing but the image of the inclusion induced homomorphism $\pi_1(f^nP,*) \longrightarrow \pi_1(P,*)$. We consider the sequence \[(S) : \pi_1(P,*) = F_0 \geq F_1 \geq F_2 \geq \ldots\]

\begin{proposition} \label{prop:rigid} $\mathcal{H}$ is rigid.
\end{proposition}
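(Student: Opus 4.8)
The plan is to verify the rigidity condition directly from the definition by exhibiting, for each $m \geq 1$ and each $n \in \mathbb{Z}$, a commutative ladder diagram between the two finite stretches of $\mathcal{H}$ whose vertical arrows are isomorphisms. The natural candidates for these vertical arrows are the homomorphisms induced by the homeomorphism $f^n$. Indeed, since $fP \subseteq {\rm int}\ P$, we have $f^{k}P \subseteq f^{k-1}P$ for every $k$, and $f^n$ maps the nested chain $P \supseteq fP \supseteq \ldots \supseteq f^mP$ homeomorphically onto the nested chain $f^nP \supseteq f^{n+1}P \supseteq \ldots \supseteq f^{n+m}P$.

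The one subtlety is basepoints: the maps $\varphi_k$ are computed with respect to the fixed basepoint $* \in K$, but $f^n$ need not fix $*$; it only maps $* \in K$ to $f^n* \in K$ (recall $K$ is invariant). So first I would set up the diagram with $f^n_*$ as vertical arrows landing in the groups $\pi_1(f^{n+k}P, f^n*)$, which commutes on the nose because $f^n$ is a homeomorphism of pairs and functoriality of $\pi_1$ gives $\varphi_{n+k} \circ (f^n)_* = (f^n)_* \circ \varphi_k$ at the level of the shifted basepoint. Then, to land in the groups $\pi_1(f^{n+k}P, *)$ that actually appear in $\mathcal{H}$, I would compose with basepoint-change isomorphisms $h_{\gamma}\colon \pi_1(f^{n+k}P, f^n*) \to \pi_1(f^{n+k}P, *)$ along a suitable path $\gamma$. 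The path must be chosen once and uniformly: since $*, f^n* \in K \subseteq f^{n+m}P$ and each $f^{n+k}P$ is path connected (being a connected manifold homeomorphic to $P$), pick a path $\gamma_n$ from $f^n*$ to $*$ inside the smallest set $f^{n+m}P$; it then lies in all the larger sets $f^{n+k}P$, $0 \le k \le m$, so the basepoint-change isomorphisms are compatible with the $\varphi_{n+k}$'s. Precomposing the $f^n_*$ with these $h_{\gamma_n}$ gives vertical isomorphisms making the required square commute. This is exactly the device already used in Claim 3 and in the surjectivity argument of Theorem \ref{teo:main}, and also mirrors the rigidity verification for $\mathcal{H}$ in the proof of Theorem \ref{teo:hom}, so I would phrase it in parallel.

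In summary the key steps, in order, are: (1) observe $f^{k+1}P \subseteq f^kP$ from $fP \subseteq {\rm int}\ P$, so the sequence $\mathcal{H}$ and all the shifted stretches make sense; (2) note that $f^n$ is a homeomorphism carrying the chain $P \supseteq \ldots \supseteq f^mP$ onto $f^nP \supseteq \ldots \supseteq f^{n+m}P$ and fixing $K$ setwise; (3) write down the commutative ladder with vertical arrows $(f^n)_*$ targeting the groups based at $f^n*$; (4) choose a path $\gamma_n$ from $f^n*$ to $*$ inside $f^{n+m}P \subseteq K$-neighbourhood chain, apply the basepoint-change isomorphisms $h_{\gamma_n}$ to move everything to basepoint $*$, checking compatibility with the bonding maps; (5) conclude the composite vertical arrows $h_{\gamma_n}\circ (f^n)_*$ are isomorphisms and the diagram commutes, which is precisely rigidity. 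I do not expect a genuine obstacle here — the statement is essentially formal once the basepoint bookkeeping is handled carefully; the only place demanding attention is verifying that a single path $\gamma_n$ works simultaneously for all $m$ columns, which is immediate because $f^{n+m}P$ is contained in every $f^{n+k}P$ with $k \le m$.
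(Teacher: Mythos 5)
Your proposal is correct and matches the paper's own proof: the paper likewise takes the vertical arrows to be $h_{\gamma} f^n_*$, with $\gamma$ a path in the smallest set $f^{n+m}P$ joining $*$ and $f^n(*)$ (both lie in $K$), and concludes rigidity from the fact that the restrictions $f^n|_{f^kP}$ are homeomorphisms. The basepoint bookkeeping you flag is exactly the only point the paper handles as well.
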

\begin{proof} This is essentially contained in the proof of the surjectivity of $i_*$ in Theorem \ref{teo:main}. Let $n,m \in \mathbb{Z}$, $m \geq 0$. Choose a path $\gamma$ contained in $f^{n+m}P$ joining $*$ and $f^n(*)$. Denote $h_{\gamma}$ the basepoint change isomorphism $h_{\gamma}(\alpha) := \gamma \cdot \alpha \cdot \gamma^{-1}$. The following diagram is conmutative

\[\xymatrix@=15mm{\pi_1(P,*) \ar[d]^{h_{\gamma}f^n_*}_{\cong} & \pi_1(fP,*) \ar[l]_{\varphi_1} \ar[d]^{h_{\gamma}f^n_*}_{\cong} & \ldots \ar[l]_{\varphi_2} \ar@{}[d]|{\vdots} & \pi_1(f^mP,*)  \ar[l]_{\varphi_m} \ar[d]_{\cong}^{h_{\gamma}f^n_*} \\ \pi_1(f^nP,*) & \pi_1(f^{n+1}P,*) \ar[l]^{\varphi_{n+1}} & \ldots \ar[l]^{\varphi_{n+2}} & \pi_1(f^{n+m}P,*) \ar[l]^{\varphi_{n+m}}}\]

The vertical arrows are isomorphisms because the restrictions $f^n|_P : P \longrightarrow f^nP$, $f^n|_{fP} : fP \longrightarrow f^{n+1}P$, $\ldots$ are homeomorphisms. Thus $\mathcal{H}$ is rigid.
\end{proof}

Our main tool here is a powerful theorem of Kadlof \cite[Theorem 4.3, p. 180]{kadlof1}:

\smallskip
{\it Theorem.} Let $(K,*)$ be a continuum in $\mathbb{R}^3$. Assume that ${\rm pro-}\pi_1(K,*)$ has the Mittag--Leffler property, $\check{\pi}_1(K,*)$ is countable and $\check{H}_2(K;\mathbb{Z}_2)$ is finitely generated. Then $K$ has the shape of a compact polyhedron.
\smallskip

Kadlof's result is formulated in terms of pointed $1$--movability, which as mentioned in the introduction is equivalent to ${\rm pro-}\pi_1(K,*)$ having the Mittag--Leffler property.

\begin{theorem} \label{teo:3var1} The inclusion $i : K \subseteq \mathcal{A}(K)$ is a shape equivalence if, and only if, there exists $n$ such that every loop based at $*$ and contained in $f^nP$ can be homotoped, within $P$ and keeping $*$ fixed, to a loop contained in $f^{n+1}P$. When this is the case, $K$ has the shape of a compact polyhedron.
\end{theorem}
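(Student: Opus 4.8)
The plan is to turn the loop condition into the purely algebraic statement that the sequence $(S)$ stabilizes, and then prove the two implications separately, the nontrivial one resting on Kadlof's theorem quoted above.

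First I would record the translation. A loop based at $*$ and contained in $f^nP$ represents an element of $\pi_1(f^nP,*)$, and its image under $\varphi_1\cdots\varphi_n$ ranges over exactly $F_n$ as the loop varies; the possibility of homotoping it within $P$, keeping $*$ fixed, into $f^{n+1}P$ means precisely that this image lies in $F_{n+1}$. So the condition in the statement says $F_n\subseteq F_{n+1}$ for some $n$, and since $f^{n+1}P\subseteq f^nP$ always gives $F_{n+1}\subseteq F_n$, it is equivalent to $F_n=F_{n+1}$ for some $n$. By Proposition~\ref{prop:rigid} the sequence $\mathcal{H}$ is rigid, so Proposition~\ref{prop:one}.2 upgrades this to $F_n=F_{n+1}=F_{n+2}=\ldots$; that is, the condition in the statement holds if and only if $(S)$ stabilizes. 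Thus it remains to show that $i$ is a shape equivalence if and only if $(S)$ stabilizes, and that when this happens $K$ has the shape of a compact polyhedron.

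For the implication that $i$ being a shape equivalence forces $(S)$ to stabilize, I would argue as follows. If $i:K\subseteq\mathcal{A}(K)$ is a shape equivalence then by Theorem~\ref{teo:main} $K$ has the shape of a polyhedron, hence (as in Claim~1 of the proof of Theorem~\ref{teo:main}) $(K,*)$ has the pointed shape of a pointed polyhedron, and therefore ${\rm pro}\text{-}\pi_1(K,*)$ has the Mittag--Leffler property. Now $\{f^nP\}_{n\geq 0}$ is a pointed ANR-expansion of $(K,*)$ — the $f^nP$ are compact ANRs forming a neighbourhood basis of $K$, all containing $*$ — so ${\rm pro}\text{-}\pi_1(K,*)$ is (pro-isomorphic to) the inverse sequence formed by the groups $\pi_1(f^nP,*)$, $n\geq 0$, with the inclusion-induced bonding morphisms $\varphi_n$; this sequence is rigid by Proposition~\ref{prop:rigid}. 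Proposition~\ref{prop:one}.3 then yields that $(S)$ stabilizes. Conversely, suppose $(S)$ stabilizes; I would verify the three hypotheses of Kadlof's theorem for the continuum $(K,*)\subseteq\mathbb{R}^3$. Mittag--Leffleranity of ${\rm pro}\text{-}\pi_1(K,*)$ is again Proposition~\ref{prop:one}.3 (this time in the direction ``$(S)$ stabilizes $\Rightarrow$ Mittag--Leffler''). Finite generation of $\check{H}_2(K;\mathbb{Z}_2)$ is immediate from Theorem~\ref{teo:hom}, which in fact makes this group finite. For countability of $\check{\pi}_1(K,*)$ I would invoke Theorem~\ref{teo:algebra} together with Remark~\ref{rem:fg}: its hypotheses are met, since $\mathcal{H}$ is rigid, $(S)$ stabilizes by assumption, and $G_0=\pi_1(P,*)$ is finitely generated and residually finite, being the fundamental group of a compact $3$-manifold; Remark~\ref{rem:fg} then identifies $\check{\pi}_1(K,*)$ with a quotient of the finitely generated group $\pi_1(P,*)$, so it is finitely generated and in particular countable. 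Kadlof's theorem now gives that $K$ has the shape of a compact polyhedron, and Theorem~\ref{teo:main} then gives that $i$ is a shape equivalence. This last step also proves the concluding assertion of the theorem.

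The point I expect to be delicate is the verification of Kadlof's countability hypothesis: the only route I see to countability of $\check{\pi}_1(K,*)$ passes through Hopficity of the stable groups $F_n$, which in turn relies on the fact — a deep consequence of the geometrization theorem, due in this form to Hempel — that fundamental groups of compact $3$-manifolds are residually finite. Without Hopficity the relevant inverse limit of surjections of isomorphic finitely generated groups could a priori be uncountable, so this input is genuinely needed. Everything else is routine: checking that $\{f^nP\}$ is a legitimate pointed ANR-expansion of $(K,*)$, and the usual care with change of basepoint when moving between unpointed and pointed shape invariants, handled exactly as in Claim~3 of the proof of Theorem~\ref{teo:main}.
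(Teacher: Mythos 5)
Your proposal is correct and follows essentially the same route as the paper: translate the loop condition into $F_n=F_{n+1}$, use rigidity (Proposition \ref{prop:rigid} and Proposition \ref{prop:one}) to identify this with stabilization of $(S)$ and with the Mittag--Leffler property of ${\rm pro\text{-}}\pi_1(K,*)$, and then apply Kadlof's theorem, with countability of $\check{\pi}_1(K,*)$ obtained from Theorem \ref{teo:algebra} via residual finiteness and finite generation of $\pi_1(P,*)$, finishing with Theorem \ref{teo:main}. The only cosmetic difference is that you use the quotient half of Remark \ref{rem:fg} where the paper uses the injectivity of $\underline{\varphi}$ (the subgroup half); both yield countability equally well.
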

\begin{proof} Notice that the condition mentioned in the theorem is the verbal expression of the equality $F_n = F_{n+1}$. Since $\mathcal{H}$ is rigid, this is equivalent to saying that $(S)$ stabilizes, and in turn to assuming that ${\rm pro-}\pi_1(K,*)$ has the Mittag--Leffler property, or that $K$ is pointed $1$--movable.

Suppose $i$ is a shape equivalence. Then $(K,*)$ has the shape of a polyhedron and therefore ${\rm pro-}\pi_1(K,*)$ has the Mittag--Leffler property.

Conversely, assume ${\rm pro-}\pi_1(K,*)$ has the Mittag--Leffler property. By Theorem \ref{teo:main} we only need to show that $K$ has the shape of a polyhedron, and we do this using the theorem of Kadlof quoted above. From Theorem \ref{teo:hom} we know that $\check{H}_2(K;\mathbb{Z}_2)$ is finitely generated, so it only remains to prove that $\check{\pi}_1(K,*)$ is countable. To this end we shall apply Theorem \ref{teo:algebra} to the sequence $\mathcal{H}$. Hypothesis ({\it a}\/) is satisfied because of Proposition \ref{prop:rigid}, whereas ({\it b}\/) holds by assumption. Let us consider hypothesis ({\it c}\/). The group $\pi_1(P,*)$ is finitely generated because it is the fundamental group of a compact $3$--manifold. It is also true that it is residually finite, although this is much more involved to prove: it follows from the geometrisation conjecture of Thurston \cite[Theorem 3.3, p. 364]{thurston1}. Hence the three hypotheses of Theorem \ref{teo:algebra} are satisfied, and therefore $\underline{\varphi} : \check{\pi}_1(K,*) \longrightarrow \pi_1(P,*)$ is injective. Since $\pi_1(P,*)$ is countable (because it is finitely generated), the same is true of $\check{\pi}_1(K,*)$. The result follows.
\end{proof}

\begin{proof}[Proof of Corollary \ref{cor:loccon}] A continuum having at most a countable number of path connected components is pointed $1$--movable \cite[Corollary 3.2, p. 152]{krasinkiewiczminc1} so ${\rm pro-}\pi_1(K,*)$ has the Mittag--Leffler property and $(S)$ stabilizes. Thus $F_n = F_{n+1}$ for some $n$ and Theorem \ref{teo:3var1} applies.
\end{proof}

Remark \ref{rem:fg} and the proof of Theorem \ref{teo:3var1} establish the following result:

\begin{remark} \label{rem:fg3var} If $i : K \subseteq \mathcal{A}(K)$ is a shape equivalence, then $\check{\pi}_1(K,*)$ is isomorphic to both a subgroup and a quotient of $\pi_1(P,*)$.
\end{remark}

Suppose for a moment that the basepoint $*$ were actually a fixed point for $f$, as considered in the introduction just before stating Theorem \ref{teo:3var}. Recall that we denoted $\Phi : \pi_1(P,*) \longrightarrow \pi_1(P,*)$ the homomorphism induced by $f|_P : P \longrightarrow P$ on fundamental groups, and said that $\Phi$ \emph{stabilizes} if the sequence \[\pi_1(P,*) \geq {\rm im}\ \Phi \geq {\rm im}\ \Phi^2 \geq \ldots\] stabilizes in the sense that there exists $n$ such that ${\rm im}\ \Phi^n = {\rm im}\ \Phi^{n+1}$ for some $n$. Under these hypotheses the proof of Theorem \ref{teo:3var} is trivial:

\begin{proof}[Proof of Theorem \ref{teo:3var} (weak version)] Let us write $f^n\|_P : P \longrightarrow f^nP$ for the homeomorphism that results from restricting the domain of $f^n$ to $P$ \emph{and its range} (this is the difference with $f|_P$) to $f^nP$. Then $(f|_P)^n$ equals the composition \[\xymatrix@=15mm{P \ar[r]^{f^n\|_P}_{\cong} & f^nP \ar[r] & P}\] where the unlabeled arrow denotes, as usual, an inclusion. Consequently $\Phi^n$ factors as \[\xymatrix@=15mm{\pi_1(P,*) \ar[r]^{(f^n\|_P)_*}_{\cong} & \pi_1(f^nP,*) \ar[r] & \pi_1(P,*)}\] so, the first arrow being surjective, ${\rm im}\ \Phi^n$ is precisely the image of the second arrow, which is $F_n$. Thus $(S)$ stabilizes if and only if $\Phi$ stabilizes. With Theorem \ref{teo:3var1}, this proves Theorem \ref{teo:3var}.
\end{proof}

Let us consider the general case where $*$ is just a point in $P$ but not necessarily in $K$ or a fixed point for $f$. Now $\Phi$ cannot be defined as readily as before, because $(f|_P)_*$ is not an endomorphism of $\pi_1(P,*)$, but we can still define a map $\Phi : \pi_1(P,*) \longrightarrow \pi_1(P,*)$ by choosing a path $\gamma \subseteq P$ from $*$ to $f*$ and letting $\Phi$ be the composition \[\xymatrix@=15mm{\pi_1(P,*) \ar[r]^{(f|_P)_*} & \pi_1(P,f*) \ar[r]^{h_{\gamma}}_{\cong} & \pi_1(P,*)}\] where $h_{\gamma}$ denotes the basepoint change isomorphism $h_{\gamma}([\alpha]) = [\gamma \cdot \alpha \cdot \gamma^{-1}]$. The equality $F_n = {\rm im}\ \Phi^n$ is no longer true, so the argument used above to prove Theorem \ref{teo:3var} is not valid. However, the content of the theorem itself (with this new definition of $\Phi$) still holds, and this is what we aim to prove now. We need a couple of auxiliary results.
\medskip

Let $f' : \mathbb{R}^3 \longrightarrow \mathbb{R}^3$ be another homeomorphism for which $f'P \subseteq {\rm int}\ P$ too, so that $f'$ has an attractor $K' \subseteq {\rm int}\ P$.

\begin{proposition} \label{prop:inv} Suppose $f|_P, f'|_P : P \longrightarrow P$ are homotopic. Then there is a shape equivalence $k : K \longrightarrow K'$ such that the diagram \[\xymatrix{ & K \ar[dd]^{\cong}_{k} \ar[dl] \\ P & \\ & K' \ar[ul]}\] commutes in the shape category. The unlabeled arrows denote shape morphisms induced by inclusions.
\end{proposition}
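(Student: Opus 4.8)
\textbf{Proof strategy for Proposition \ref{prop:inv}.} The plan is to exploit the fact that both $K$ and $K'$ are obtained as nested intersections of iterates of the \emph{same} trapping region $P$, so that replacing $f$ by $f'$ does not change the ambient ANR expansion, only the bonding maps, which are homotopic to the original ones by hypothesis. First I would replace $f$ and $f'$ by suitable common powers if needed so that we may work with the sequences $\{f^nP\}_{n\ge 0}$ and $\{f'^nP\}_{n\ge 0}$, both decreasing with intersections $K$ and $K'$ respectively; since $P$ is a compact ANR these are ANR expansions of $K$ and $K'$ in the sense used throughout the paper. The key observation is that $f|_P\simeq f'|_P$ as maps $P\to P$ implies, by iterating and composing with inclusions, that for every $n$ the two maps $f^n|_P$ and $f'^n|_P$ from $P$ into $P$ are homotopic; more useful is that for each $n$ the inclusion $f^{n+1}P\hookrightarrow f^nP$ and the corresponding inclusion for $f'$ fit into a homotopy-commutative ladder connecting the two expansions.

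The second step is to actually build the comparison morphism. Using a homotopy $H:P\times[0,1]\to P$ between $f|_P$ and $f'|_P$, I would like to produce, for each $k$, a map $P=f^0P\to f'^k P$ (or between appropriate terms) so that the resulting square with the bonding inclusions commutes up to homotopy; concretely, pushing forward by $f^k$ on one side and $f'^k$ on the other and interpolating with $H$ gives maps that are compatible in the homotopy category. This yields a level morphism (or at least a morphism of pro-spaces after reindexing) from the expansion $\{f^nP\}$ to the expansion $\{f'^nP\}$, hence a shape morphism $k:K\to K'$. The triangle in the statement commutes in the shape category because, at the level of expansions, each newly constructed map composed with the inclusion of $f'^kP$ into $P$ is homotopic to the inclusion of $f^kP$ into $P$ — this is precisely the homotopy furnished by $f^k H$ (or $f'^k H$), exactly as in the construction of the shape morphism $r$ in the proof of Corollary \ref{cor:fixed}.

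To see that $k$ is a shape \emph{equivalence}, I would run the same construction with the roles of $f$ and $f'$ interchanged, using the reverse homotopy $H(\,\cdot\,,1-t)$, obtaining a shape morphism $k':K'\to K$ with the analogous commuting triangle. Then $k'k:K\to K$ and $kk':K'\to K'$ both commute (in the shape category) with the inclusions into $P$; since $K\hookrightarrow P$ induces the identity-compatible structure, a formal argument at the level of pro-HPol shows $k'k$ and $kk'$ are represented by level morphisms homotopic to the identity ladders of the respective expansions, hence are the identity in shape. Alternatively, and perhaps more cleanly, one invokes Theorem \ref{teo:main} together with Theorem \ref{teo:hom}: the inclusions $K\hookrightarrow P$ and $K'\hookrightarrow P$ and the map $k$ compatible with them reduce the claim to showing $k$ induces isomorphisms on shape pro-homotopy groups, which follows because the two pro-groups are built from the same groups $\pi_d(f^nP)\cong\pi_d(P)\cong\pi_d(f'^nP)$ with bonding maps that $k$ intertwines.

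The main obstacle I anticipate is the bookkeeping needed to turn the single homotopy $f|_P\simeq f'|_P$ into a genuine morphism of the two inverse sequences (as opposed to just a collection of homotopic maps): one must check that the squares commute up to homotopy \emph{coherently} enough to define a map of pro-objects in the homotopy category, which may require a small reindexing of one of the expansions or a mapping-telescope-type argument. This is the same technical point that underlies the passage from ``homotopy-commutative diagram'' to ``shape morphism'' in Corollary \ref{cor:fixed}, so it is routine but deserves care. A secondary, minor point is keeping track of basepoints, but since $K,K'\subseteq\mathrm{int}\,P$ and $P$ is path connected, any basepoint issues are handled by the joinability/basepoint-change arguments already used (cf. Claim 3 in the proof of Theorem \ref{teo:main}), and the statement as given is unpointed anyway.
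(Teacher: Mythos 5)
The heart of Proposition \ref{prop:inv} is exactly the step you defer: turning the single homotopy $f|_P\simeq f'|_P$ into an actual morphism of inverse systems and then verifying that the two composites are identity morphisms. With your choice of expansions $\{f^nP\}$ and $\{f'^nP\}$ with inclusion bonding maps, the comparison maps (essentially $(f')^nf^{-n}|_{f^nP}$, interpolated using $H$) and the homotopies making each square commute \emph{inside $f'^nP$} have to be written down and checked; you name this as the anticipated obstacle but do not resolve it. Your argument that $k$ is an equivalence is also not sound as stated: knowing that $k'k$ and $kk'$ commute in the shape category with the inclusions into $P$ does not force them to be identities, because the inclusion $K\subseteq P$ is not a monomorphism in the shape category; what is needed is a levelwise check that the composite maps are homotopic to the bonding inclusions in each $f^nP$ (resp.\ $f'^nP$), not merely in $P$. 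The fallback via Theorem \ref{teo:main} and Theorem \ref{teo:hom} does not work either: Theorem \ref{teo:main} concerns the inclusion $K\subseteq\mathcal{A}(K)$ under a polyhedral-shape hypothesis that is not available here (the proposition is used precisely in situations where one does not yet know whether $K$ has polyhedral shape), and the step ``$k$ induces isomorphisms of pro-homotopy groups, hence is a shape equivalence'' would require a shape-theoretic Whitehead theorem with its own hypotheses, which you neither state nor verify.

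The paper sidesteps all of this bookkeeping by choosing different ANR expansions: the constant sequences $P\xleftarrow{\,f|_P\,}P\xleftarrow{\,f|_P\,}\cdots$ and $P\xleftarrow{\,f'|_P\,}P\xleftarrow{\,f'|_P\,}\cdots$, whose inverse limits are $K$ and $K'$ respectively. Since every term is literally $P$ and $f|_P\simeq f'|_P$, the identity maps of $P$ already form a homotopy-commutative ladder in both directions; this defines $k$ and its shape inverse simultaneously, and compatibility with the inclusions of $K$ and $K'$ into $P$ is read off from the first coordinate. If you rerun your argument with these expansions instead of the nested iterates, the coherence problem you flag disappears and the proof becomes a few lines.
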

\begin{proof} It is very easy to check that the inverse sequences \[\xymatrix@=15mm{P & P \ar[l]_{f|_P} & P \ar[l]_{f|_P} & \ldots \ar[l]_{f|_P}}\] and \[\xymatrix@=15mm{P & P \ar[l]_{f'|_P} & P \ar[l]_{f'|_P} & \ldots \ar[l]_{f'|_P}}\] have $K$ and $K'$ as their inverse limits, respectively. Therefore they are ANR--expansions of $K$ and $K'$, because $P$ is an ANR. Since $f|_P$ and $f'|_P$ are homotopic, the following diagram commutes in the homotopy category: \[\xymatrix@=15mm{P \ar@<0.5ex>[d]^{{\rm id}_P} & P \ar[l]_{f|_P} \ar@<0.5ex>[d]^{{\rm id}_P} & P \ar[l]_{f|_P} \ar@<0.5ex>[d]^{{\rm id}_P} & \ldots \ar[l]_{f|_P} \\ P \ar@<0.5ex>[u]^{{\rm id}_P} & P \ar[l]_{f'|_P} \ar@<0.5ex>[u]^{{\rm id}_P}  & P \ar[l]_{f'|_P} \ar@<0.5ex>[u]^{{\rm id}_P}  & \ldots \ar[l]_{f'|_P}}\]

The sequence of downward arrows $\{{\rm id}_P\}$ defines a shape morphism $k : K \longrightarrow K'$ whose shape inverse is obviously given by the sequence of upward arrows $\{{\rm id}_P\}$, so $k$ is a shape equivalence. It is also clear that $k$ commutes with the inclusions of $K$ and $K'$ in $P$.
\end{proof}

For technical reasons we shall need to assume that $\gamma$ is a polygonal path contained in ${\rm int}\ P$. There is clearly no loss of generality in doing this, since $\gamma$ can be replaced by a polygonal approximation $\gamma'$ contained in ${\rm int}\ P$ which (if chosen sufficiently close to $\gamma$) gives rise to the same map $h_{\gamma'} = h_{\gamma}$, thus leaving $\Phi$ unchanged. 

\begin{lemma} \label{lem:isotopia} Let $\gamma : [0,1] \longrightarrow \mathbb{R}^3$ be a polygonal path starting at $p$ and finishing at $q$. Let $W$ be an open neighbourhood of ${\rm im}\ \gamma$. Then there exists an isotopy $(h_t)_{t \in [0,1]} : \mathbb{R}^3 \longrightarrow \mathbb{R}^3$ such that ({\it i}\/) $h_0 = {\rm id}$, ({\it ii}\/) each $h_t$ is the identity outside $W$, ({\it iii}\/) $h_t(q) = \gamma(1-t)$ for every $t \in [0,1]$.
\end{lemma}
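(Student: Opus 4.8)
The plan is to reduce the statement to the case of a single straight segment and then produce the general isotopy by concatenating finitely many such elementary pushes, one per edge of $\gamma$. Before anything else I would use the compactness of ${\rm im}\ \gamma$ and the openness of $W$ to fix an $\epsilon>0$ so small that the open $\epsilon$--neighbourhood of ${\rm im}\ \gamma$ ---and hence the $\epsilon$--neighbourhood of any subsegment of it--- is contained in $W$. Every homeomorphism produced below will be the identity outside that $\epsilon$--neighbourhood, which is exactly the support condition demanded of $h_t$.

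Next I would treat a single affine segment $S$ running from a point $b$ to a point $a$; in the application $b$ and $a$ will be consecutive vertices of $\gamma$. After a rigid motion of $\mathbb{R}^3$, which carries $\epsilon$--neighbourhoods of segments to $\epsilon$--neighbourhoods of segments and therefore changes nothing, I may assume $b=0$ and $a=\ell e_1$, where $\ell=|a-b|$ and $e_1=(1,0,0)$. Pick a smooth $\psi : \mathbb{R}^3 \longrightarrow [0,1]$ equal to $1$ on $S$ and with compact support contained in the open $\epsilon$--neighbourhood of $S$, and set $X:=\psi\,e_1$. Since $X$ has compact support it is complete, so its flow $(\Theta_s)_{s\in\mathbb{R}}$ is a one--parameter group of homeomorphisms of $\mathbb{R}^3$, each the identity off ${\rm supp}\ \psi$; and since $\psi\equiv 1$ along $S$ the orbit of $0$ satisfies $\Theta_s(0)=s e_1$ for $0\leq s\leq\ell$. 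Hence $t\mapsto\Theta_{t\ell}$ is an isotopy, starting at the identity, supported in the $\epsilon$--neighbourhood of $S$, that slides $b$ affinely along $S$ onto $a$. This is the building block.

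To assemble the general case, write $\gamma$ as a concatenation of affine edges $S_1,\ldots,S_k$ with successive vertices $v_0=p,v_1,\ldots,v_k=q$, coming from a partition $0=t_0<t_1<\ldots<t_k=1$ with $\gamma(t_j)=v_j$. For each $j$ the single--segment construction, applied with $b=v_j$ and $a=v_{j-1}$, gives an isotopy $g^{(j)}$ of $\mathbb{R}^3$, the identity at time $0$, the identity outside the $\epsilon$--neighbourhood of $S_j$ (a subset of $W$), that slides $v_j$ affinely onto $v_{j-1}$. I would then let $h$ run, on consecutive subintervals of $[0,1]$, first $g^{(k)}$, then $g^{(k-1)}$, \ldots, then $g^{(1)}$, each stage starting from the point where the previous one ended; the subinterval on which one pushes along $S_i$ is taken to have length $t_i-t_{i-1}$ and is reparametrised affinely onto $[0,1]$. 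Because every $g^{(j)}$ is the identity at time $0$ the pieces match at the nodes, so $h$ is a genuine isotopy; each $h_t$, being a finite composition of homeomorphisms, is a homeomorphism; $h_0={\rm id}$; and by induction $h_t$ is the identity outside the union of the $\epsilon$--neighbourhoods of the $S_j$, that is outside the $\epsilon$--neighbourhood of ${\rm im}\ \gamma$, which lies in $W$. Finally, following the orbit of $q$ through the successive stages one checks inductively that after the first $j$ stages $q$ has reached $v_{k-j}$, and that on the $(j+1)$-th subinterval $h_t(q)$ is the (backward) affine traversal of $S_{k-j}$ at the rescaled time; by the choice of the subinterval lengths this traversal coincides with $\gamma(1-t)$.

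The one point that needs genuine care ---and the only real obstacle--- is this last bookkeeping: the subinterval lengths and the internal time reparametrisations of the $g^{(j)}$ must be arranged so that the broken--line trajectory of $q$ is not merely \emph{some} backward traversal of $\gamma$ but exactly $t\mapsto\gamma(1-t)$. This is elementary affine arithmetic, but easy to get sloppily wrong. It is also worth noting why one argues edge by edge rather than by flowing along a single vector field tangent to ${\rm im}\ \gamma$: a path need not be injective, so in general there is no well--defined velocity field along its image, whereas the edgewise construction is insensitive to self--intersections because at each stage one merely composes homeomorphisms of $\mathbb{R}^3$.
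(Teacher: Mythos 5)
Your argument is correct. The paper actually omits the proof of this lemma entirely (it is declared ``easy''), so there is no official argument to compare with; your edge-by-edge construction --- a compactly supported vector field whose flow slides a point along one affine segment, followed by concatenation of these elementary pushes with the time subintervals matched to the partition of $[0,1]$ --- is a standard and perfectly valid way to supply the missing details, and your remarks about the support being contained in an $\epsilon$--neighbourhood of ${\rm im}\ \gamma\subseteq W$ and about self-intersections causing no trouble are exactly the right points to address. The only implicit assumption is that $\gamma$ is parametrized affinely on each edge (the usual reading of ``polygonal path''); if one only knew that the image is polygonal, the same construction works after replacing the affine time change $t\mapsto\Theta_{t\ell}$ by $t\mapsto\Theta_{s(t)}$ with $s(t)$ chosen continuously so that the tracked point is $\gamma(1-t)$, which still yields an isotopy since monotonicity of $s$ is not needed.
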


The proof of Lemma \ref{lem:isotopia} is easy, so we omit it. In order to understand the behaviour of $h_t$ at the homotopy level, let $\alpha$ be a loop in $W$ based at $q$. Each $h_t \alpha$ is a loop in $W$ based at $h_t(q) = \gamma(1-t)$, and we can picture $\alpha$ as sliding backwards along $\gamma$ until it reaches its final position $h_1 \alpha$, based at $p$. See Figure \ref{fig:isotopy}.

\begin{figure}[h]
\begin{pspicture}(-0.2,-0.2)(8.4,4.5)
	\rput[t](0,2){$p$} \rput[tr](2.1,1.5){$\gamma(1-t)$} \rput[t](5.2,-0.1){$q$}
	\rput[bl](1.8,4.2){$h_1 \alpha$} \rput[bl](3.8,3.8){$h_t\alpha$} \rput[bl](7.8,2.2){$\alpha$}
	\rput[bl](0,0){\scalebox{0.9}{\includegraphics{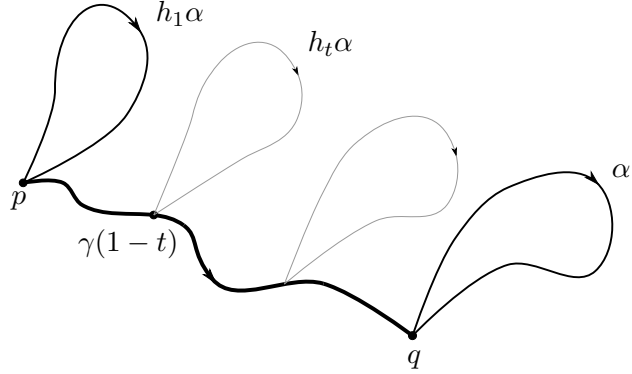}}}
\end{pspicture}
\caption{The action of $h_t$ on a loop $\alpha$ \label{fig:isotopy}}
\end{figure}

It is clear from Figure \ref{fig:isotopy} that $\alpha \simeq \gamma^{-1} \cdot h_1 \alpha \cdot \gamma$ (rel. $q$) in $W$. Therefore the map $(h_1)_* : \pi_1(W,q) \longrightarrow \pi_1(W,p)$ is precisely the basepoint change isomorphism $h_{\gamma}$.

\begin{proof}[Proof of Theorem \ref{teo:3var} (strong version)] Let $\gamma$ be the path connecting $*$ with $f*$ that was used in defining $\Phi$. As mentioned earlier, we may assume that $\gamma$ is a polygonal path in ${\rm int}\ P$. Choose an open neighbourhood $W$ of $\gamma$ such that $\overline{W} \subseteq {\rm int}\ P$. By Lemma \ref{lem:isotopia} there exists an isotopy $(h_t)_{t \in [0,1]}$ such that $h_0$ is the identity, each $h_t$ is the identity outside $W$ (hence also outside ${\rm int}\ P$) and $h_1(f*) = *$. Define $f_t := h_t  f$ for each $t \in [0,1]$ and denote $f' := f_1$ for notational consistency. Notice that $f'$ has $P$ as a trapping region, so it has an attractor $K' \subseteq {\rm int}\ P$. Also, since $f$ and $f'$ agree outside $P$, clearly $\mathcal{A}(K) = \mathcal{A}(K')$. By construction $f|_P : P \longrightarrow P$ and $f'|_P : P \longrightarrow P$ are homotopic (actually isotopic), so by Proposition \ref{prop:inv} there is a shape equivalence $k : K \longrightarrow K'$ such that the diagram \[\xymatrix{& & K \ar[dd]^{\cong}_{k} \ar[dl] \\ \mathcal{A}(K) = \mathcal{A}(K') & P \ar[l] & \\ & & K' \ar[ul]}\] commutes in the shape category. It follows that $i : K \subseteq \mathcal{A}(K)$ is a shape equivalence if, and only if, $i' : K' \subseteq \mathcal{A}(K')$ is a shape equivalence.

Observe that $f'* = h_1f* = *$ by construction, so that $*$ is a fixed point for $f'$ (in particular, $* \in K'$). Denote $\Phi' : \pi_1(P,*) \longrightarrow \pi_1(P,*)$ the homomorphism induced by $f'|_P$ on fundamental groups. Applying the weak version of Theorem \ref{teo:3var} proved earlier to $f'$ we see that $i' : K' \subseteq \mathcal{A}(K')$ is a shape equivalence if, and only if, $\Phi'$ stabilizes. However, since $f'|_P = h_1 f|_P$ by construction, $\Phi' = (h_1)_* (f|_P)_* = h_{\gamma} (f|_P)_* = \Phi$. Thus $\Phi'$ is stabilizes if, and only if, $\Phi$ stabilizes. This concludes the proof.
\end{proof}

\subsection*{Trapping regions with free fundamental group} Theorem \ref{teo:3var} leads naturally to the following very general algebraic problem: given a group $G$ and an endomorphism $\alpha : G \longrightarrow G$, is there an effective way to decide whether $\alpha$ stabilizes? (in our case $G = \pi_1(P,*)$, where $P$ is a trapping region for $f$, and $\alpha = \Phi$ which is essentially $(f|_P)_*$). Owing to the discussion before Theorem \ref{teo:3var} we shall understand that ``given $G$ and $\alpha$'' means that the following data is available to us:
\begin{itemize}
	\item a finite presentation $G = \langle g_1,g_2, \ldots, g_k : r_1, r_2, \ldots r_{\ell} \rangle$,
	\item the elements $\alpha(g_1), \ldots, \alpha(g_k)$ written as words in the $g_i$.
\end{itemize}

In this section we are interested in the case when $P$ is a handlebody or, more generally, $G$ is a finitely generated free group. We want to show that it is possible to perform a computation in terms of $G$ and $\alpha$ that will decide whether the inclusion $i$ is a shape equivalence or not.

For brevity we shall denote $F_n = {\rm im}\ \alpha^n$, as usual. Each $F_n$ is a free group (being a subgroup of a free group) and finitely generated (being an image of the finitely generated group $G$). Let $r_n := {\rm rk}\ F_n$. Since $F_{n+1} = \alpha(F_n)$, it follows that $r_{n+1} \leq r_n$, so the sequence $\{r_n\}$ is decreasing.

\begin{remark} \label{rem:libre} Let $\alpha : G \longrightarrow G$ be an endomorphism of a free group $G$ of finite rank. Then \[\alpha \text{ is injective } \Leftrightarrow {\rm rk}\ G = {\rm rk}\ {\rm im}\ \alpha.\]
\end{remark}
\begin{proof} ($\Rightarrow$) is trivial. Conversely ($\Leftarrow$), assume ${\rm rk}\ G = {\rm rk}\ {\rm im}\ \alpha$. Since free groups are uniquely determined by their rank, there exists an isomorphism $\theta : {\rm im}\ \alpha \longrightarrow G$, and then $\theta \alpha : G \longrightarrow G$ is an epimorphism. Finitely generated free groups are Hopfian \cite[Proposition 3.5, p. 14]{lyndonschupp1}, so $\theta \alpha$ is an isomorphism, and consequently $\alpha$ is injective.
\end{proof}

This remark has the following two consequences which are of interest to us:

\begin{proposition} \label{prop:estabiliza} The sequence $\{r_n\}$ is strictly decreasing until the first equality appears at some stage $m$, and from then on it becomes constant: \[r_0 = {\rm rk}\ G > r_1 > \ldots > r_m = r_{m+1} = r_{m+2} = \ldots\] where $m \leq {\rm rk}\ G$.
\end{proposition}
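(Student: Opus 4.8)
The idea is to reduce everything to Remark \ref{rem:libre} (the rank criterion for injectivity of an endomorphism of a finitely generated free group), applied not to $\alpha$ itself but to its restriction to the first subgroup $F_n$ at which an equality of ranks occurs.

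First I would record that $\{r_n\}$ is a weakly decreasing sequence of non-negative integers with $r_0 = {\rm rk}\ G$ (since $F_0 = {\rm im}\ \alpha^0 = G$), so it is eventually constant. Let $m$ be the smallest index with $r_m = r_{m+1}$. For every $n < m$ the inequality $r_n \geq r_{n+1}$ is then strict by minimality of $m$, which already yields the chain ${\rm rk}\ G = r_0 > r_1 > \ldots > r_m$.

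The core step is to show that $r_k = r_m$ for all $k \geq m$. Here I use that $F_{m+1} = \alpha(F_m) \subseteq F_m$, so $\alpha$ restricts to an endomorphism $\alpha|_{F_m} : F_m \longrightarrow F_m$ of the finitely generated free group $F_m$, whose image is exactly $F_{m+1}$. The equality $r_m = r_{m+1}$ says precisely that ${\rm rk}\ F_m = {\rm rk}\ {\rm im}(\alpha|_{F_m})$, so by Remark \ref{rem:libre} the map $\alpha|_{F_m}$ is injective. Since every $F_k$ with $k \geq m$ is a subgroup of $F_m$, the further restriction $\alpha|_{F_k}$ is injective as well, and $F_{k+1} = \alpha(F_k)$ is the image of this injective map; hence $F_{k+1} \cong F_k$ and $r_{k+1} = r_k$. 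An immediate induction on $k$ gives $r_m = r_{m+1} = r_{m+2} = \ldots$, as claimed.

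Finally, the bound $m \leq {\rm rk}\ G$ falls out of the strict chain $r_0 > r_1 > \ldots > r_m$: it consists of $m$ strict inequalities between non-negative integers starting at $r_0 = {\rm rk}\ G$, whence $0 \leq r_m \leq r_0 - m$, i.e. $m \leq {\rm rk}\ G$. I do not expect a genuine obstacle; the only subtlety worth flagging is that the rank criterion of Remark \ref{rem:libre} must be applied to $\alpha|_{F_m}$ (an endomorphism of $F_m$) rather than to $\alpha$ on all of $G$, and that injectivity on $F_m$ automatically propagates to all the subgroups $F_k$ with $k \geq m$, which is what forces the sequence to remain constant from stage $m$ onwards.
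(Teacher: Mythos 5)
Your proof is correct and follows essentially the same route as the paper: both apply Remark \ref{rem:libre} to the restriction $\alpha|_{F_m}$ at the first stage where ranks coincide to get injectivity, then propagate this injectivity to the later subgroups $F_k$ to force the ranks to remain constant. Your explicit counting argument for $m \leq {\rm rk}\ G$ just spells out what the paper dismisses as trivial.
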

\begin{proof} Assume that $r_n = r_{n+1}$ for some $n$. Applying Remark \ref{rem:libre} to the map $\alpha|_{F_n}$ we see that it has to be injective, and consequently the same is true of $\alpha|_{F_{n+1}}$. Then again by Remark \ref{rem:libre} we have $r_{n+1} = r_{n+2}$, and so on. Thus $\{r_n\}$ is strictly decreasing until it becomes constant. The inequality $m \leq {\rm rk}\ G$ is trivial.
\end{proof}

\begin{proposition} \label{prop:libre1} Choose any $n$ such that $r_n = r_{n+1}$ (for instance, let $n = {\rm rk}\ G$). Then $\alpha$ stabilizes if, and only if, \[(*) \ \ \alpha^n(g_i) \in {\rm gp}\ \{\alpha^{n+1}(g_1),\ldots,\alpha^{n+1}(g_k)\}\] for every generator $g_i$ of $G$.
\end{proposition}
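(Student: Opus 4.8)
The plan is to break the asserted equivalence into two steps. \emph{Step 1: $(*)$ is just a restatement of the subgroup equality $F_n=F_{n+1}$.} Since $G={\rm gp}\{g_1,\ldots,g_k\}$ and $\alpha^m$ is a homomorphism, $F_m={\rm im}\ \alpha^m={\rm gp}\{\alpha^m(g_1),\ldots,\alpha^m(g_k)\}$ for every $m$; and because $F_{n+1}=\alpha(F_n)\subseteq F_n$ always holds, the equality $F_n=F_{n+1}$ is equivalent to the reverse inclusion $F_n\subseteq F_{n+1}$, i.e. to every generator $\alpha^n(g_i)$ of $F_n$ lying in $F_{n+1}={\rm gp}\{\alpha^{n+1}(g_1),\ldots,\alpha^{n+1}(g_k)\}$. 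That is precisely condition $(*)$. So the proposition reduces to showing that, for our chosen $n$, the group $\alpha$ stabilizes if and only if $F_n=F_{n+1}$.

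\emph{Step 2: showing that $\alpha$ stabilizes if and only if $F_n=F_{n+1}$.} One direction is immediate: if $F_n=F_{n+1}$, then repeatedly applying $\alpha$ gives $F_n=F_{n+1}=F_{n+2}=\ldots$, so $\alpha$ stabilizes. For the converse I would first exploit the hypothesis $r_n=r_{n+1}$: by Remark \ref{rem:libre}, the restriction $\beta:=\alpha|_{F_n}\colon F_n\to F_n$ --- which has image $F_{n+1}$ of rank $r_{n+1}=r_n={\rm rk}\ F_n$ --- is injective. Since ${\rm im}\ \beta^i=F_{n+i}$ for all $i\ge 0$, the assertion that $\alpha$ stabilizes is exactly the assertion that the decreasing chain $\{{\rm im}\ \beta^i\}_{i\ge 0}$ eventually becomes constant. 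Everything therefore comes down to the following group-theoretic claim, which I expect to be the only real obstacle: \emph{if $\beta$ is an injective endomorphism of a finitely generated free group $F$ and ${\rm im}\ \beta^{i_0}={\rm im}\ \beta^{i_0+1}$ for some $i_0$, then $\beta$ is an automorphism; in particular ${\rm im}\ \beta=F$.} Note that the information already available from Proposition \ref{prop:estabiliza} --- namely that the \emph{ranks} $r_i$ stabilize --- is not enough to conclude this: a finitely generated free group has proper subgroups of the same rank, so the ranks do not detect where the subgroup chain $F_n\supseteq F_{n+1}\supseteq\ldots$ stops decreasing.

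To prove the claim I would use only that finitely generated free groups are Hopfian (the same fact already invoked in the proof of Remark \ref{rem:libre}). Put $H:={\rm im}\ \beta^{i_0}={\rm im}\ \beta^{i_0+1}$. Then $\beta^{i_0}$ restricts to an isomorphism $\psi\colon F\to H$ (injective because $\beta$ is, surjective onto $H$ by definition), while the hypothesis says $\beta(H)=H$, so $\beta|_H$ is a surjective endomorphism of $H$; as $H\cong F$ is finitely generated free, hence Hopfian, $\beta|_H$ is an automorphism of $H$. A one-line verification on elements gives $\psi^{-1}\circ(\beta|_H)\circ\psi=\beta$ as endomorphisms of $F$, so $\beta$ is conjugate via the isomorphism $\psi$ to an automorphism and is therefore itself an automorphism, whence ${\rm im}\ \beta=F$. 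Applying this with $F=F_n$ and $\beta=\alpha|_{F_n}$ yields $F_{n+1}=F_n$, which completes Step 2 and, together with Step 1, the proposition. Finally, the parenthetical choice $n={\rm rk}\ G$ is legitimate: Proposition \ref{prop:estabiliza} asserts that the ranks have stabilized by stage ${\rm rk}\ G$, so $r_{{\rm rk}\ G}=r_{{\rm rk}\ G+1}$.
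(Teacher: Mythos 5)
Your proof is correct, and its skeleton matches the paper's: you reduce $(*)$ to the subgroup equality $F_n=F_{n+1}$ (noting $F_m={\rm gp}\{\alpha^m(g_1),\ldots,\alpha^m(g_k)\}$ and $F_{n+1}\leq F_n$), dispose of the easy direction, and for the converse pick $m\geq n$ with $F_m=F_{m+1}$ and use Remark \ref{rem:libre} with $r_n=r_{n+1}$ to get injectivity of $\alpha|_{F_n}$. Where you diverge is the finishing step. The paper stays purely set-theoretic: injectivity of $\alpha^{m-n}|_{F_n}$ gives $\emptyset=F_m\setminus F_{m+1}=\alpha^{m-n}(F_n\setminus F_{n+1})$, hence $F_n=F_{n+1}$ at once --- equivalently, an injective map reflects equality of subsets, so $\alpha^{m-n}(F_n)=\alpha^{m-n}(F_{n+1})$ already forces $F_n=F_{n+1}$, with no group theory beyond the injectivity supplied by Remark \ref{rem:libre}. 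You instead prove the stronger claim that $\beta=\alpha|_{F_n}$ is an automorphism of $F_n$, conjugating $\beta$ through the isomorphism $\psi=\beta^{i_0}\colon F_n\to H$ and invoking the Hopfian property a second time; the verification $\psi^{-1}(\beta|_H)\psi=\beta$ is correct, and your conclusion is slightly more informative (it exhibits $\alpha$ as restricting to an automorphism of $F_n$, so the chain is constant from stage $n$ on), at the cost of an extra appeal to Malcev/Hopf where a one-line injectivity argument suffices. Your side remarks are also sound: rank stabilization alone cannot locate where the chain of subgroups stops decreasing (free groups have proper subgroups of full rank), and the choice $n={\rm rk}\ G$ is justified by Proposition \ref{prop:estabiliza}.
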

\begin{proof} If $(*)$ holds then $F_n = F_{n+1}$ so $\alpha$ stabilizes. Conversely, assume $\alpha$ stabilizes so $F_m = F_{m+1}$ for sufficiently large $m$, which we may as well choose to satisfy $m \geq n$. Since $r_n = r_{n+1}$, it follows from Remark \ref{rem:libre} that $\alpha|_{F_n}$ is injective, so the same is true of all of its powers. Thus \[\emptyset = F_m \backslash F_{m+1} = \alpha^{m-n}(F_n) \backslash \alpha^{m-n}(F_{n+1}) = \alpha^{m-n}(F_n \backslash F_{n+1}),\] so $F_n = F_{n+1}$ and $(*)$ holds.
\end{proof}

The interesting point about Proposition \ref{prop:libre1} is that $(*)$ can be algorithmically checked in free groups. We present an approach due to Stallings \cite{stallings1} (see also \cite{kapovich1}) but, rather than explaining it theoretically, we consider a specific example.

\begin{example} \label{ejem:stallings} Suppose $f$ has a trapping region $P$ that is a handlebody with two handles, so that $\pi_1(P,*)$ is the free group on two generators $\langle x,y : \emptyset \rangle$. Assume that inspection of $f$ shows that $\Phi$ acts on the generators as follows: $\Phi(x) = xy^2$ and $\Phi(y) = x^2y^3$. Then $i : K \subseteq \mathcal{A}(K)$ is not a shape equivalence.
\end{example}
\begin{proof} Abelianizing we get a homomorphism $\bar{\Phi} : \mathbb{Z}^2 \longrightarrow \mathbb{Z}^2$ whose matrix is \[\begin{pmatrix} 1 & 2 \\ 2 & 3 \end{pmatrix}\] which is invertible, so $\bar{\Phi}$ is an isomorphism. Thus $r_0 = r_1 = 2$ and according to Proposition \ref{prop:libre1} we only need to check whether $x$ and $y$ belong to ${\rm im}\ \Phi = {\rm gp}\ \{xy^2,x^2y^3\}$ (that is, whether $\Phi$ is surjective).

Let $\Gamma$ be the graph with a single vertex and two loops labeled $x$ and $y$. Its fundamental group is $\pi_1(P,*)$, the free group in two generators. We may represent ${\rm im}\ \Phi$ by the graph $\Sigma$ shown in Figure \ref{fig:groups}. $\Sigma$ has two loops based at vertex $v$, each corresponding to one of the generators of ${\rm im}\ \Phi$. For instance, the left loop comprises three edges labeled $x$, $y$, $y$ (to unclutter the drawing we represent $x$ by a single arrow and $y$ by a double arrow). The right loop comprises five edges labeled $x$, $x$, $y$, $y$, $y$. We say that this graph represents ${\rm im}\ \Phi$ in the sense that the natural map $p : \Sigma \longrightarrow \Gamma$ sending each edge onto the corresponding edge of $\Gamma$ induces a homomorphism between $\pi_1(\Sigma,v)$ and $\pi_1(\Gamma)$ whose image is precisely ${\rm im}\ \Phi$.

Now consider the graph $\Sigma'$ obtained by collapsing the two $x$ edges that exit $v$ into a single one and, similarly, the two $y$ edges that enter $v$ into a single one (refer to Figure \ref{fig:groups}). The map $p : \Sigma \longrightarrow \Gamma$ obviously gives rise to a map $p' : \Sigma' \longrightarrow \Gamma$ such that ${\rm im}\ p'_*$ is still ${\rm im}\ \Phi$. Finally, $\Sigma''$ is obtained from $\Sigma'$ by collapsing the two $y$ edges that enter $w$ into a single one. Again, $\Sigma''$ still represents ${\rm im}\ \Phi$, but it has an important advantage over $\Sigma$: now each vertex has at most an incoming $x$ edge and an outcoming $x$ edge, and similarly for the $y$ edges. Thus the map $p'' : \Sigma'' \longrightarrow \Gamma$ has the unique path lifting property, which neither $p'$ nor $p$ have. In the terminology of Stallings' paper \cite[pp. 554 and 555]{stallings1} each graph has been obtained \emph{folding} the previous one, and the map $p''$ is an \emph{immersion}.

\begin{figure}[h]
\begin{pspicture}(-0.2,0)(12.4,2.4)
	\rput[bl](0,0){\scalebox{0.8}{\includegraphics{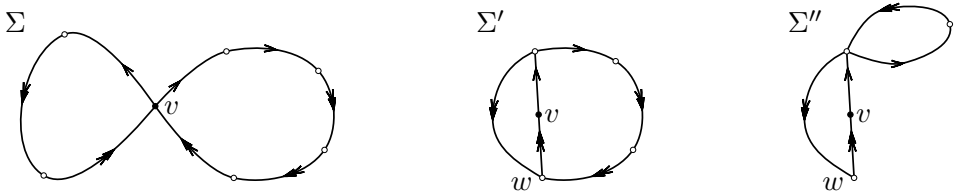}}}
	\rput[bl](-0.2,2){$\Sigma$} \rput[bl](6,2){$\Sigma'$} \rput[bl](10.1,2){$\Sigma''$}
	\rput(2,1){$v$} \rput(7,0.9){$v$} \rput(6.6,0){$w$} \rput(11.1,0.9){$v$} \rput(10.7,0){$w$}
\end{pspicture}
\caption{Checking whether $\Phi$ is surjective \label{fig:groups}}
\end{figure}

From the last graph we readily see that $x$ does not belong to ${\rm im}\ \Phi$ (neither does $y$) because its lift starting from $v$ is not a closed path. Thus $\Phi$ is not surjective.
\end{proof}

\begin{theorem} \label{teo:free} Let $f$ be a homeomorphism of $\mathbb{R}^3$ having a connected trapping region $P$ with free fundamental group. Let $K \subseteq {\rm int}\ P$ be the attractor inside $P$. Then the inclusion $i : K \subseteq \mathcal{A}(K)$ is a shape equivalence if and only if $K$ has the shape of a finite wedge sum of circumferences $\mathbb{S}^1$ and $2$--spheres $\mathbb{S}^2$.
\end{theorem}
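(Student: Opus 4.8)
The plan is to establish the two implications separately. The ``if'' direction is immediate: a finite wedge sum $\bigvee^{r}\mathbb{S}^{1}\vee\bigvee^{s}\mathbb{S}^{2}$ is a compact polyhedron, so if $K$ has its shape then Theorem~\ref{teo:main} gives at once that $i:K\subseteq\mathcal{A}(K)$ is a shape equivalence. All the real work is in the ``only if'' direction, and the strategy there is to pin down the (polyhedral) shape of $K$ by combining dynamical input (Theorems~\ref{teo:3var1} and \ref{teo:hom}, Remark~\ref{rem:fg3var}) with the fact that $\mathcal{A}(K)$ is an open $3$--manifold and a purely topological lemma about $2$--complexes with free fundamental group.

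So assume $i$ is a shape equivalence; recall $K$, $P$ and hence $\mathcal{A}(K)$ are connected. Since $i$ induces an isomorphism of shape pro--homotopy and $\mathcal{A}(K)$ is an ANR, $\check{\pi}_{1}(K,*)\cong\pi_{1}(\mathcal{A}(K))$, and by Theorem~\ref{teo:3var1} the set $K$ has the shape of a compact polyhedron. By Remark~\ref{rem:fg3var} the group $\check{\pi}_{1}(K,*)$ is isomorphic to a subgroup of the free group $\pi_{1}(P,*)$, hence is free, and also to a quotient of $\pi_{1}(P,*)$, hence is finitely generated; therefore $\check{\pi}_{1}(K,*)$ is free of some finite rank $r$. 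On the other hand $\mathcal{A}(K)$ is a connected open subset of $\mathbb{R}^{3}$, hence a non--compact $3$--manifold without boundary, and so it has the homotopy type of a $2$--dimensional CW complex $X$ with $\pi_{1}(X)$ free of rank $r$. Finally $H_{*}(X;\mathbb{Z})\cong H_{*}(\mathcal{A}(K);\mathbb{Z})\cong\check{H}_{*}(K;\mathbb{Z})$; here $H_{1}(X;\mathbb{Z})\cong\mathbb{Z}^{r}$, the group $H_{2}(X;\mathbb{Z})$ is torsion free (because $\mathcal{A}(K)$ is an open $3$--manifold) and of finite rank $s$ (because $K$ has the shape of a compact polyhedron), and $H_{d}(X;\mathbb{Z})=0$ for $d\geq 3$.

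It now suffices to prove the following lemma: \emph{a connected $2$--complex $X$ with $\pi_{1}(X)$ free of rank $r$ and ${\rm rk}\,H_{2}(X;\mathbb{Z})=s<\infty$ is homotopy equivalent to $\bigvee^{r}\mathbb{S}^{1}\vee\bigvee^{s}\mathbb{S}^{2}$}; granting it, $K$ has the shape of that wedge and we are done. To prove the lemma, write $\widetilde{X}$ for the universal cover, a simply connected $2$--complex, so that $\pi_{2}(X)=H_{2}(\widetilde{X})=\ker\bigl(\partial_{2}:C_{2}(\widetilde{X})\to C_{1}(\widetilde{X})\bigr)$ is a submodule of the free $\mathbb{Z}[\pi_{1}X]$--module $C_{2}(\widetilde{X})$. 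Since $\pi_{1}X$ is free, $\mathbb{Z}[\pi_{1}X]$ is a free ideal ring (Cohn), hence $\pi_{2}(X)$ is a free $\mathbb{Z}[\pi_{1}X]$--module; and because ${\rm cd}(\pi_{1}X)\leq 1$ and $H_{1}(\widetilde{X})=0$, the Cartan--Leray spectral sequence degenerates and identifies $H_{2}(X;\mathbb{Z})$ with the coinvariants $\pi_{2}(X)_{\pi_{1}X}$, which forces $\pi_{2}(X)\cong\mathbb{Z}[\pi_{1}X]^{s}$. Now choose loops realizing a basis of $\pi_{1}X$ and maps $\mathbb{S}^{2}\to X$ realizing a $\mathbb{Z}[\pi_{1}X]$--basis of $\pi_{2}X$; together they define $\psi:\bigvee^{r}\mathbb{S}^{1}\vee\bigvee^{s}\mathbb{S}^{2}\to X$. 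Then $\psi$ is a $\pi_{1}$--isomorphism, and on universal covers $\widetilde{\psi}$ is a map of simply connected complexes inducing an isomorphism on integral homology: the universal cover of $\bigvee^{r}\mathbb{S}^{1}\vee\bigvee^{s}\mathbb{S}^{2}$ is a tree with a copy of $\bigvee^{s}\mathbb{S}^{2}$ hung at each vertex, hence is homotopy equivalent to a wedge of $2$--spheres with $H_{2}\cong\mathbb{Z}[\pi_{1}X]^{s}$; $\widetilde{\psi}$ carries this standard basis to a $\mathbb{Z}[\pi_{1}X]$--basis of $H_{2}(\widetilde{X})=\pi_{2}X$; and all other homology vanishes since both complexes are $2$--dimensional (here one uses $H_{\geq 3}(\widetilde{X})=0$). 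By the Whitehead theorem $\widetilde{\psi}$, and therefore $\psi$, is a homotopy equivalence.

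The crux of the argument is this lemma, and within it the assertion that $\pi_{2}(X)$ is \emph{free of the correct rank} over $\mathbb{Z}[\pi_{1}X]$: a mere isomorphism on $\pi_{1}$ and on integral homology would not let us invoke Whitehead's theorem (it fails in general), and it is precisely the free--ideal--ring property of $\mathbb{Z}[\pi_{1}X]$ that makes $\pi_{2}(X)$ free, so that the right number of $2$--spheres can be read off from $H_{2}$. I expect this to be the only genuine obstacle. An alternative but heavier route would replace the lemma by passing to a compact core of $\mathcal{A}(K)$ (Scott) and invoking the classification of compact orientable $3$--manifolds with free fundamental group as connected sums of handlebodies and copies of $\mathbb{S}^{1}\times\mathbb{S}^{2}$, each of which is readily seen to have the homotopy type of such a wedge; the spectral sequence plus free-ideal-ring argument above is more self contained.
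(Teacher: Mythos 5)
Your proposal is correct, but it takes a genuinely different route from the paper in the ``only if'' direction. The two arguments coincide at the start: the ``if'' part is Theorem \ref{teo:main}, and both use Remark \ref{rem:fg3var} to conclude that $\check{\pi}_1(K,*)$, being simultaneously a subgroup and a quotient of the finitely generated free group $\pi_1(P,*)$, is free of finite rank $r$, together with finite generation of $\check{H}_2(K)$. At that point the paper simply invokes Kadlof's classification theorem \cite[Corollary 3.4]{kadlof1} --- two continua in $\mathbb{R}^3$ of polyhedral shape with isomorphic $\check{\pi}_1$ and $\check{H}_2(\,\cdot\,;\mathbb{Z}_2)$ have the same shape --- and the proof is three lines. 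You instead transfer the problem to the basin: since $i$ is a shape equivalence and $\mathcal{A}(K)$ is an ANR, $K$ has the shape of $\mathcal{A}(K)$, which is an open $3$--manifold and hence has the homotopy type of a $2$--complex $X$ with free fundamental group; you then prove from scratch the classification of such complexes (free $\pi_1$ of rank $r$, $H_2$ of finite rank $s$ $\Rightarrow$ homotopy equivalent to $\bigvee^r\mathbb{S}^1\vee\bigvee^s\mathbb{S}^2$) via Cohn's free--ideal--ring property of $\mathbb{Z}[F_r]$, the Cartan--Leray spectral sequence and Whitehead's theorem. The nonstandard inputs you rely on --- that an open $3$--manifold has the homotopy type of a $2$--complex, and that submodules of free $\mathbb{Z}[F]$--modules are free, so $\pi_2(X)=\ker\partial_2\subseteq C_2(\widetilde{X})$ is free --- are true and correctly deployed, and the identification $H_2(X)\cong(\pi_2 X)_{\pi_1 X}$ is valid because $H_1(\widetilde{X})=0$ kills the $q=1$ row while ${\rm cd}\,\pi_1 X\leq 1$ kills $H_2(\pi_1 X)$ and $H_3(\pi_1 X)$. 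What each approach buys: the paper's is much shorter because Kadlof's corollary absorbs exactly this piece of $\mathbb{R}^3$ topology, whereas yours bypasses that classification result (in effect reproving the relevant special case through the basin) and makes explicit that $r={\rm rk}\,\check{\pi}_1(K,*)$ and $s={\rm rk}\,\check{H}_2(K;\mathbb{Z})$. One small simplification: torsion--freeness of $H_2(\mathcal{A}(K);\mathbb{Z})$ need not be imported from $3$--manifold theory, since the coinvariants of a free $\mathbb{Z}[\pi]$--module are automatically free abelian, so it falls out of your own computation.
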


The proof depends on the following result of Kadlof: if $K, K' \subseteq \mathbb{R}^3$ are two continua with polyhedral shape such that $\check{\pi}_1(K) \cong \check{\pi}_1(K')$ and $\check{H}_2(K;\mathbb{Z}_2) \cong \check{H}_2(K';\mathbb{Z}_2)$, then they have the same shape \cite[Corollary 3.4, p. 178]{kadlof1}. In particular, if $\check{\pi}_1(K)$ is the free group of rank $r$ and $\check{H}_2(K;\mathbb{Z}_2)$ has rank $s$ (so it is isomorphic to $\mathbb{Z}_2^s$) then $K$ has the shape of a wedge sum of $r$ circumferences and $s$ $2$--spheres $\mathbb{S}^2$.

\begin{proof}[Proof of Theorem \ref{teo:free}] If $K$ has the shape of a finite wedge sum of circumferences and $2$--spheres, then it has polyhedral shape and so $i$ is a shape equivalence by Theorem \ref{teo:main}. Conversely, assume $i$ is a shape equivalence. Since $P$ is connected, $K$ is connected too, so we will omit basepoints from the notation. By Remark \ref{rem:fg3var} $\check{\pi}_1(K,*)$ is isomorphic to both a subgroup and a quotient of $\pi_1(P,*)$, which is a finitely generated (being $P$ a compact $3$--manifold) free group (by hypothesis). Thus $\check{\pi}_1(K,*)$ is also a finitely generated free group, say of rank $r$. Also, Theorem \ref{teo:hom} shows that $\check{H}_2(K;\mathbb{Z}_2)$ is finitely generated, so it is isomorphic to some $\mathbb{Z}_2^s$. Then by Kadlof's theorem $K$ has the shape of a finite wedge sum of circumferences and $2$--spheres.
\end{proof}

The same method of proof establishes the following:

\begin{corollary} If $P$ is a handlebody, then $i$ is a shape equivalence if and only if $K$ has the shape of a finite bouquet of circumferences.
\end{corollary}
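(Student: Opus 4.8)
The plan is to run the argument of Theorem \ref{teo:free} and then extract the extra information that a handlebody provides. A handlebody is a connected, compact $3$--manifold whose fundamental group is free of finite rank, so Theorem \ref{teo:free} applies verbatim: the inclusion $i : K \subseteq \mathcal{A}(K)$ is a shape equivalence if and only if $K$ has the shape of a finite wedge sum of $r = {\rm rk}\ \check{\pi}_1(K)$ circumferences and $s = {\rm rk}\ \check{H}_2(K;\mathbb{Z}_2)$ copies of $\mathbb{S}^2$. The only thing left to establish is therefore that, whenever $i$ is a shape equivalence, one has $s = 0$, so that no $2$--spheres occur.

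The reverse implication is immediate: if $K$ has the shape of a finite bouquet of circumferences, then it has polyhedral shape, so Theorem \ref{teo:main} guarantees that $i$ is a shape equivalence.

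For the forward implication, assume $i$ is a shape equivalence. First I would invoke Theorem \ref{teo:free} to see that $K$ has the shape of a wedge sum of $r$ circumferences and $s$ copies of $\mathbb{S}^2$, and then I would pin down $s$. Recall that in the proof of Theorem \ref{teo:hom} the group $\check{H}_2(K;\mathbb{Z}_2)$ is identified with the inverse limit of the rigid sequence $\mathcal{H}$ built from the groups $H_2(f^nP;\mathbb{Z}_2)$, so that by Remark \ref{rem:fg} it is isomorphic to a subgroup of $H_2(P;\mathbb{Z}_2)$. But a handlebody with $g$ handles is homotopy equivalent to a wedge of $g$ circumferences, hence $H_2(P;\mathbb{Z}_2) = 0$; therefore $\check{H}_2(K;\mathbb{Z}_2) = 0$ and $s = 0$. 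Consequently $K$ has the shape of a finite bouquet of circumferences, which is what we want.

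I do not anticipate a genuine obstacle; the corollary is essentially a consequence of Theorems \ref{teo:main}, \ref{teo:hom} and \ref{teo:free} put together. The one point requiring a little care is that Theorem \ref{teo:hom} is stated only with the conclusion that $\check{H}_d(K;\mathbb{Z}_2)$ is finite, whereas here I need the sharper fact --- visible from its proof through Theorem \ref{teo:algebra} and Remark \ref{rem:fg} --- that this group embeds in $H_d(P;\mathbb{Z}_2)$. (One cannot instead argue via $H_2(\mathcal{A}(K);\mathbb{Z}_2)$, since an open subset of $\mathbb{R}^3$ need not have vanishing second $\mathbb{Z}_2$--homology.) Granting that refinement, the vanishing of $\check{H}_2(K;\mathbb{Z}_2)$ is automatic and the corollary follows.
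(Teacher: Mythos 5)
Your proposal is correct and follows essentially the paper's intended argument: the paper derives this corollary by ``the same method of proof'' as Theorem \ref{teo:free}, i.e.\ Kadlof's classification together with the fact that, $P$ being a handlebody, $H_2(P;\mathbb{Z}_2)=0$ and hence (by the embedding of $\check{H}_2(K;\mathbb{Z}_2)$ into $H_2(P;\mathbb{Z}_2)$ coming from Theorem \ref{teo:algebra} and Remark \ref{rem:fg}, as in the proof of Theorem \ref{teo:hom}) no $2$--spheres can occur. Your identification of the needed refinement of Theorem \ref{teo:hom} is exactly the right point, so nothing is missing.
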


\section{Appendix: the general case of Theorem \ref{teo:hom}} \label{app:trick}

To prove Theorem \ref{teo:hom} in the general case we shall reduce it to the weak case already considered, where assumption $(A)$ is satisfied. Recall that $K$ is an attractor for a homeomorphism $f$ of a locally compact, metrizable ANR $M$. Denote $Q$ the Hilbert cube. Clearly $f \times {\rm id}_Q : M \times Q \longrightarrow M \times Q$ is a homeomorphism having $K \times Q$ as an attractor, and further \[\hat{f} := f \times {\rm id}_Q \times (t \mapsto t^2) : M \times Q \times [0,1) \longrightarrow M \times Q \times [0,1)\] is a homeomorphism having $\hat{K} := K \times Q \times 0$ as an attractor with basin of attraction $\mathcal{A}(\hat{K}) = \mathcal{A}(K) \times Q \times [0,1)$.

\begin{proposition} \label{prop:hom} $\hat{K}$ has a neighbourhood $P \subseteq \mathcal{A}(\hat{K})$ that is a compact ANR.
\end{proposition}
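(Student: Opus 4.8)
The plan is to recognise $\mathcal{A}(\hat{K})$ as a Hilbert cube manifold and then to carve out of it a compact piece containing $\hat{K}$. It is worth recording first why the obvious approach fails: although $K$, being compact, has a compact neighbourhood $C$ in the locally compact ANR $\mathcal{A}(K)$, the compactum $C$ need not be an ANR, and multiplying by the Hilbert cube does not repair this; so $C \times Q \times [0,1/2]$ is \emph{not} an admissible choice for $P$. The surplus room in the $Q$-- and $[0,1)$--factors has to be used more cleverly, namely to manufacture local models.

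First I would note that $\mathcal{A}(K)$, being open in the locally compact, separable, metrizable ANR $M$, is itself a locally compact separable metrizable ANR, and therefore so is $\mathcal{A}(K) \times [0,1)$. By Edwards' characterisation of Hilbert cube manifolds (a locally compact separable metrizable space $X$ is an ANR if and only if $X \times Q$ is a $Q$--manifold), the space $\mathcal{A}(\hat{K}) = \mathcal{A}(K) \times Q \times [0,1) \cong \big(\mathcal{A}(K) \times [0,1)\big) \times Q$ is a $Q$--manifold. (Alternatively one may observe that $M \times Q \times [0,1)$ is a $Q$--manifold and that $\mathcal{A}(\hat{K})$ is an open subset of it.)

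Next I would invoke Chapman's triangulation theorem to write $\mathcal{A}(\hat{K}) \cong L \times Q$ for some locally finite polyhedron $L$. Fix such a homeomorphism, let $Z \subseteq L \times Q$ be the image of the compactum $\hat{K}$, and let $\pi : L \times Q \longrightarrow L$ denote the projection. Then $\pi(Z)$ is a compact subset of the locally finite polyhedron $L$, hence it lies in the interior of a compact subpolyhedron $L_0 \subseteq L$: cover $\pi(Z)$ by finitely many of the open stars of vertices of $L$ and let $L_0$ be the union of the corresponding closed stars, which is compact precisely because $L$ is locally finite. Since $Q$ is compact, $L_0 \times Q$ is then a neighbourhood of $Z$ in $L \times Q$; moreover $L_0 \times Q$ is a compact $Q$--manifold (again by Chapman, $L_0 \times Q \cong (\text{compact polyhedron}) \times Q$), hence a compact ANR. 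Transporting $L_0 \times Q$ back through the fixed homeomorphism produces the desired compact ANR neighbourhood $P$ of $\hat{K}$ inside $\mathcal{A}(\hat{K})$.

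I expect the only genuine obstacle to be the conceptual one already highlighted — that compact neighbourhoods of $\hat{K}$ are abundant but typically fail to be ANRs — so that the infinite--dimensional machinery (the theorems of Edwards and Chapman) is really needed; everything after that is routine bookkeeping. If, for the intended application, one wants arbitrarily small such $P$ (so that $\hat{f}^{N} P \subseteq P$ can be arranged and the case $(A)$ of Theorem \ref{teo:hom} invoked), it suffices to pass to a sufficiently fine subdivision of $L$ before selecting $L_0$.
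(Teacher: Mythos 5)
Your argument is essentially the paper's own proof: apply Edwards' ANR theorem to recognise $\mathcal{A}(\hat{K})$ as a Hilbert cube manifold, triangulate it as $|L|\times Q$ by Chapman's theorem, enclose the projection of the compactum $\hat K$ in a compact subpolyhedron $L_0$, and pull $L_0\times Q$ back; the paper even takes the same final step of using finitely many (closed stars of) simplices of the locally finite complex. The one point you gloss over is separability: you declare $M$ to be a locally compact, \emph{separable}, metrizable ANR, but separability is not among the paper's hypotheses, and both Edwards' and Chapman's theorems require it. The paper supplies it dynamically: choosing a compact neighbourhood $A$ of $K$, one has $\mathcal{A}(K)=\bigcup_{n\leq 0}f^nA$, a countable union of compact (hence separable) sets, so $\mathcal{A}(K)$ — which is all that matters, since everything happens inside $\mathcal{A}(\hat K)$ — is separable. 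Add that one line (or note that $\mathcal{A}(K)$ may be assumed connected and a connected locally compact metric space is $\sigma$--compact) and your proof is complete and coincides with the paper's.
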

\begin{proof} By the ANR theorem of Edwards \cite[Chapter XIV]{chapman2} $\mathcal{A}(K) \times Q$ is a $Q$--manifold, and so $\mathcal{A}(\hat{K}) = \mathcal{A}(K) \times Q \times [0,1)$ is triangulable \cite[Corollary 1, p. 330]{chapman1}. This means that there exists a countable, locally finite simplicial complex $S$ such that $\mathcal{A}(\hat{K}) \cong |S| \times Q$ via some homeomorphism $h$. In order to apply Edwards' theorem we have to check that $\mathcal{A}(K)$ is separable, but this is easy: pick a compact neighbourhood $A$ of $K$; then $A$ is compact and metrizable, hence separable, and $\mathcal{A}(K) = \bigcup_{n \leq 0} f^nA$ is a countable union of separable spaces, therefore also separable. Since $hK$ is a compact subset of $|S| \times Q$, its projection ${\rm pr}_{|S|}hK$ onto the $|S|$ factor is a compact set so there exists a finite subcomplex of $S$ such that the union of its simplices is a (polyhedral) compact neighbourhood $N$ of ${\rm pr}_{|S|}hK$ in $|S|$. Then $N \times Q$ is a compact ANR neighbourhood of $hK$ in $|S| \times Q$, and its preimage $P$ under $h$ is a compact ANR neighbourhood of $\hat{K}$ in $\mathcal{A}(\hat{K})$.
\end{proof}

Proposition \ref{prop:hom} entitles us to apply the weak version of Theorem \ref{teo:hom} to $\hat{K}$ to conclude that the inclusion $j : \hat{K} \subseteq \mathcal{A}(\hat{K})$ induces isomorphisms on \v{C}ech homology with $\mathbb{Z}_2$ coefficients and $\check{H}_*(\hat{K};\mathbb{Z}_2)$ is finitely generated. Consider $K$ and $\mathcal{A}(K)$ embedded in $\hat{K}$ and $\mathcal{A}(\hat{K})$ respectively via $e(p) := (p,0,0)$: then in the commutative diagram \[\xymatrix@=15mm{\hat{K} \ar[r]^j & \mathcal{A}(\hat{K}) \\ K \ar[u]^{e|_K} \ar[r]_i & \mathcal{A}(K) \ar[u]_e}\] both vertical arrows are homotopy equivalences and therefore induce isomorphisms in \v{C}ech homology. The general case of Theorem \ref{teo:hom} follows.

\section{Appendix: some background on shape theory \label{ap:shape}}

Denote ${\bf HMet}$ the category of metric spaces and homotopy classes of continuous maps between them. The objects of the shape category ${\bf SMet}$ are the same as those in ${\bf HMet}$, just metric spaces. Its morphisms are, however, a little more complicated. Every continuous map $f : X \longrightarrow Y$ between two metric spaces induces a shape morphism which depends only on the homotopy class of $f$ (we denote it again $f : X \longrightarrow Y$), but in general there may exist shape morphisms $u : X \longrightarrow Y$ that are not induced by any continuous map from $X$ to $Y$. Thus ${\bf SMet}$ contains representatives of every morphism in ${\bf HMet}$ plus some extra ones that account for its added flexibility. However, when $Y$ is a polyhedron (or more generally an ANR) every shape morphism $u : X \longrightarrow Y$ is induced by a continuous map $u : X \longrightarrow Y$, which heuristically means that shape theory and homotopy theory agree on spaces that are locally well behaved, such as polyhedra or ANRs.

Of course, shape morphisms can be composed. A shape morphism $u : X \longrightarrow Y$ is a \emph{shape equivalence} if it has a \emph{shape inverse}, which is a shape morphism $v : Y \longrightarrow X$ such that $vu = {\rm id}_X$ and $uv = {\rm id}_Y$. We then say that $X$ and $Y$ have \emph{the same shape}. If $f : X \longrightarrow Y$ is a continuous map, it has ``more chances'' of having a shape inverse than a homotopy inverse, because as we said earlier there might be shape morphisms from $Y$ to $X$ that are not induced by any continuous map. Thus $f$ can be a shape equivalence in spite of not being a homotopy equivalence, as mentioned after Theorem \ref{teo:flujos}.

There are several ways to define the shape category but, since all of them are quite lengthy, we refer the reader to the book by Marde{\v{s}}i{\'{c}} and Segal \cite{mardesic1}, chapters VII to IX of the monograph by Borsuk \cite{borsukshape1} or even his foundational paper \cite{borsukshape2}. A survey by Marde{\v{s}}i{\'{c}} \cite{mardesic3} is also worth reading. We can, however, particularize to the situation of interest in this paper and state what it means for the inclusion $i : K \subseteq \mathcal{A}(K)$ to be a shape equivalence. For comparison purposes, consider first what it would mean to say that $i$ is a homotopy equivalence. This would mean that it has an inverse $j : \mathcal{A}(K) \longrightarrow K$ in the homotopy category; that is, $j$ is a continuous map such that $ji \simeq {\rm id}_K$ and $ij \simeq {\rm id}_{\mathcal{A}(K)}$. Now, in the shape category an inverse for $i$ is no longer given by a single continuous map $j : \mathcal{A}(K) \longrightarrow K$ but by a sequence of continuous maps $j_n : \mathcal{A}(K) \longrightarrow V_n$ (notice the change of target space). Here the $V_n$ are a decreasing sequence of open neighbourhoods of $K$ in $\mathcal{A}(K)$ such that $\bigcap_{n \geq 1} V_n = K$, and the maps $j_n$ satisfy the following conditions for each $n$:

\begin{itemize}
	\item[({\it a}\/)] $j_n \simeq j_{n+1}$ in $V_n$,
	\item[({\it b}\/)] $j_n i \simeq {\rm id}_K$ in $V_n$,
	\item[({\it c}\/)] $j_n \simeq {\rm id}_{\mathcal{A}(K)}$ in $\mathcal{A}(K)$.
\end{itemize}

Heuristically, ({\it a}\/) guarantees that there exists a sort of ``homotopy limit'' for the sequence $j_n$, which is precisely what shape theory formalizes as a shape morphism from $\mathcal{A}(K)$ to $K$. Condition ({\it b}\/) would then be the analogous to $ji \simeq {\rm id}_K$ whereas ({\it c}\/) corresponds to $ij \simeq {\rm id}_{\mathcal{A}(K)}$. It is a basic result in shape theory that the arbitrarity in the choice of the sequence of neighbourhoods $V_n$ is inconsequential.
\medskip

There follows a brief enumeration of some of the concepts and results in shape theory that we use, without further explanation, in this paper. This is just intended as a convenient reminder; we also provide suitable bibliographic references for key results.
\medskip

{\it\underline{Absolute neighbourhood retracts}}. Polyhedra are very useful for explicit geometric constructions because they are built upon the notion of simplicial complex, but they are uncomfortable to deal with from the point of view of general topology. Borsuk abstracted the most useful properties of polyhedra and introduced the notion of absolute neighbourhood retracts, which is purely topological but still very useful in geometric topology. We shall only consider locally compact, metrizable ANRs (for the class of metrizable spaces). There is a dual notion, that of absolute neighbourhood extensors (again, for metrizable spaces) which actually coincides with that of ANRs.

Rather than recalling the definition of ANR, which the reader can find elsewhere \cite{borsukretractos,hu}, we shall just mention some of their properties. A finite dimensional space is an ANR if, and only if, it is locally contractible \cite[Theorem 7.1, p. 168]{hu}. In particular, every manifold and every polyhedron is an ANR. Conversely, every ANR has the homotopy type of a polyhedron \cite[Theorem 5, p. 317]{mardesic1} so from the point of view of homotopy theory, polyhedra and ANRs are interchangeable. An open subset of an ANR is an ANR too \cite[Proposition 7.9, p. 97]{hu}. 

\medskip

{\it\underline{ANR expansions}}. Suppose $X$ is a compact subset of a metric space $M$ and $P_k$ is a decreasing sequence of neighbourhoods of $X$ that are ANRs and whose intersection is precisely $X$. A familiar example is when $M$ is a manifold and the $P_k$ are open neighbourhoods of $X$. We may describe this situation in a fancier language by saying that $X$ is the inverse limit of the inverse sequence \[P_1 \longleftarrow P_2 \longleftarrow P_3 \longleftarrow \ldots\] where each bonding map (each arrow) denotes the inclusion of $P_{k+1}$ into $P_k$. More generally, any inverse sequence of ANRs whose inverse limit is $X$ is called an \emph{ANR expansion} of $X$, and shape morphisms between two compact sets $X$ and $Y$ can be defined in terms of suitable sequences of maps between any two ANR expansions of $X$ and $Y$. ANR expansions can also be defined for noncompact (metric) spaces $X$, although not in such simple terms.

\medskip

{\it\underline{Shape invariants}}. Let \[\xymatrix{P_1 & P_2 \ar[l]_-{p_2} & P_3 \ar[l]_-{p_3} & \ldots \ar[l]}\] be an ANR expansion of a (not necessarily compact) space $X$. The $d$--dimensional \v{C}ech homology group of $X$ is defined as the inverse limit of the inverse sequence of groups \[{\rm pro-}H_d(X) : \xymatrix{H_d(P_1) &   H_d(P_2) \ar[l]_-{(p_2)_*} & H_d(P_3) \ar[l]_-{(p_3)_*} & \ldots \ar[l]}\] where $(p_k)_*$ denotes the homomorphism induced by $p_k$ in $H_d$. The $d$--dimensional \v{C}ech cohomology group of $X$ is defined dually, using cohomology instead of homology. Choosing a basepoint $* \in X$, the $d$th shape group of $(X,*)$ is defined as the inverse limit of the inverse sequence of groups \[{\rm pro-}\pi_d(X,*) : \xymatrix{\pi_d(P_1,*) & \pi_d(P_2,*) \ar[l]_-{(p_2)_*} & \pi_d(P_3,*) \ar[l]_-{(p_3)_*} & \ldots \ar[l]}\] where again $(p_k)_*$ denotes the homomorphism induced by $p_k$ in $\pi_d$. 

It is true, but not obvious, that neither of these groups depends on the particular ANR expansion chosen to compute it. They are shape invariant: if two spaces $X$ and $Y$ have the same shape, then $\check{H}_d(X) = \check{H}_d(Y)$, $\check{H}^d(X) = \check{H}^d(Y)$ and $\check{\pi}_d(X,*) = \check{\pi}_d(Y,*)$. Finally, when $X$ is an ANR itself there are natural isomorphisms $\check{H}_d(X) = H_d(X)$, $\check{H}^d(X) = H^d(X)$ and $\check{\pi}_d(X,*) = \pi_d(X,*)$. 
\medskip

{\it\underline{The Mittag--Leffler property}}. An inverse sequence of groups \[\xymatrix{G_0 & G_1 \ar[l]_{\varphi_1} & G_2 \ar[l]_{\varphi_2} & \ldots \ar[l]_{\varphi_3}}\] has the \emph{Mittag--Leffler property} if for every $m$ there exists $N \geq m$ such that for every $n \geq N$ the equality ${\rm im}(\varphi_{m+1} \varphi_{m+2} \ldots \varphi_N) = {\rm im}(\varphi_{m+1} \varphi_{m+2} \ldots \varphi_n)$ holds true.

Given a pointed continuum $(X,*)$, we will be interested in whether the inverse sequence ${\rm pro-}\pi_d(X,*)$ has the Mittag--Leffler property. It turns out that this is independent of the particular ANR expansion chosen to define ${\rm pro-}\pi_d(X,*)$. As a particular case, when $X$ has the shape of a polyhedron then ${\rm pro-}\pi_d(X,*)$ has the Mittag--Leffler property.
\medskip

{\it\underline{Pointed $1$--movability}}. This condition plays an important role when studying attractors in $\mathbb{R}^3$. Instead of giving the original definition \cite[p. 198]{mardesic1} let us say that a continuum $X$ is \emph{pointed $1$--movable} if and only if ${\rm pro-}\pi_1(X,*)$ has the Mittag--Leffler property for some basepoint $*$ \cite[Theorem 4, p. 200]{mardesic1}, in which case the property holds for any basepoint \cite[Corollary 3, p. 212]{mardesic1}.

\bibliographystyle{plain}
\bibliography{biblio}

\end{document}